\theoremstyle{plain}
\newtheorem{theorem}{Theorem}
\newtheorem{lemma}{Lemma}
\newtheorem{corollary}{Corollary}
\newtheorem{prop}{Proposition}
\author{Monica Anderson\affiliationmark{1}
  \and Marika Diepenbroek \affiliationmark{2}
	  \and Lara Pudwell\affiliationmark{3} 
  \and Alex Stoll\affiliationmark{4}}
	\title[Pattern Avoidance in Reverse Double Lists]{Pattern Avoidance in Reverse Double Lists \footnote{Research was supported by the National Science Foundation (NSF DMS-1262852)}}
\affiliation{
 Department of Mathematics, Minnesota State University Moorhead\\
Department of Mathematics, University of North Dakota\\
  Department of Mathematics and Statistics, Valparaiso University\\
  Department of Mathematical Sciences, Clemson University}
\keywords{permutation pattern, reverse double list, Wilf class, standard Young tableau}
\begin{document}
\publicationdetails{19}{2018}{2}{14}{3289}
\maketitle

\begin{abstract}
In this paper, we consider pattern avoidance in a subset of words on $\{1,1,2,2,\dots,n,n\}$ called reverse double lists.  In particular a reverse double list is a word formed by concatenating a permutation with its reversal.  We enumerate reverse double lists avoiding any permutation pattern of length at most 4 and completely determine the corresponding Wilf classes.  For permutation patterns $\rho$ of length 5 or more, we characterize when the number of $\rho$-avoiding reverse double lists on $n$ letters has polynomial growth.  We also determine the number of $1\cdots k$-avoiders of maximum length for any positive integer $k$.
\end{abstract}

\section{Introduction}\label{S:Intro}

Let $\mathcal{S}_n$ be the set of all permutations on $[n]=\{1,2,\dots, n\}$.  Given $\pi \in \mathcal{S}_n$ and $\rho \in \mathcal{S}_k$ we say that $\pi$ \emph{contains} $\rho$ as a pattern if there exists $1 \leq i_1 < i_2 < \cdots < i_k \leq n$ such that $\pi_{i_a} \leq \pi_{i_b}$ if and only if $\rho_a \leq \rho_b$.  In this case we say that $\pi_{i_1}\cdots \pi_{i_k}$ is \emph{order-isomorphic} to $\rho$, and that $\pi_{i_1} \cdots \pi_{i_k}$ is an \emph{occurrence} of $\rho$ in $\pi$.  If $\pi$ does not contain $\rho$, then we say that $\pi$ \emph{avoids} $\rho$.  Of particular interest are the sets $\mathcal{S}_n(\rho)=\{\pi \in \mathcal{S}_n \mid \pi \text{ avoids } \rho\}$.  Let $\mathrm{s}_n(\rho)=\left|\mathcal{S}_n(\rho)\right|$.  It is well known that $\mathrm{s}_n(\rho)=\frac{\binom{2n}{n}}{n+1}$ for $\rho \in \mathcal{S}_3$; see \cite{K68}.  For $\rho \in \mathcal{S}_4$, 3 different sequences are possible for $\{\mathrm{s}_n(\rho)\}_{n \geq 1}$.  Two of these sequences are well-understood, but an exact formula for $\mathrm{s}_n(1324)$ remains open; see \cite{CG14}.

Pattern avoidance has been studied for a number of combinatorial objects other than permutations.  The definition above extends naturally for patterns in words (i.e. permutations of multisets) and there have been several algorithmic approaches to determining the number of words avoiding various patterns; see \cite{BM05, B98, JM09, P10}. 

In another direction, a permutation may be viewed as a bijection on $[n]$.  When we graph the points $(i,\pi_i)$ in the Cartesian plane, all points lie in the square $[0,n+1] \times [0,n+1]$, and thus we may apply various symmetries of the square to obtain involutions on the set $\mathcal{S}_n$.  For $\pi \in \mathcal{S}_n$, let $\pi^r = \pi_n \cdots \pi_1$ and let $\pi^c = (n+1-\pi_1)\cdots (n+1-\pi_n)$, the reverse and complement of $\pi$ respectively.  For example, the graphs of $\pi=1342$, $\pi^r=2431$, and $\pi^c = 4213$ are shown in Figure \ref{permgraph}.  

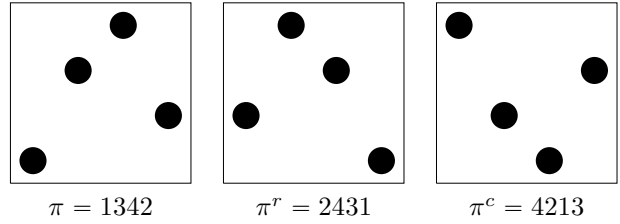
\begin{figure}[!hbt]
\begin{center}
\begin{tabular}{ccc}
\scalebox{0.6}{\begin{tikzpicture}
\draw (0,0)--(4,0)--(4,4)--(0,4)--(0,0);
\fill[black] (0.5,0.5) circle (0.3cm);
\fill[black] (1.5,2.5) circle (0.3cm);
\fill[black] (2.5,3.5) circle (0.3cm);
\fill[black] (3.5,1.5) circle (0.3cm);
\end{tikzpicture}}&
\scalebox{0.6}{\begin{tikzpicture}
\draw (0,0)--(4,0)--(4,4)--(0,4)--(0,0);
\fill[black] (0.5,1.5) circle (0.3cm);
\fill[black] (1.5,3.5) circle (0.3cm);
\fill[black] (2.5,2.5) circle (0.3cm);
\fill[black] (3.5,0.5) circle (0.3cm);
\end{tikzpicture}}&
\scalebox{0.6}{\begin{tikzpicture}
\draw (0,0)--(4,0)--(4,4)--(0,4)--(0,0);
\fill[black] (0.5,3.5) circle (0.3cm);
\fill[black] (1.5,1.5) circle (0.3cm);
\fill[black] (2.5,0.5) circle (0.3cm);
\fill[black] (3.5,2.5) circle (0.3cm);
\end{tikzpicture}}\\
$\pi=1342$& $\pi^r=2431$ &$\pi^c=4213$\\
\end{tabular}
\end{center}
\caption{The graphs of $\pi=1342$, $\pi^r=2431$, and $\pi^c=4213$}
\label{permgraph}
\end{figure}

Pattern-avoidance in centrosymmetric permutations, i.e. permutations $\pi$ such that $\pi^{rc}=\pi$ has been studied by \cite{E07, E10}, by \cite{LO10}, and by \cite{BBS10}.  \cite{F11} generalized this idea to pattern avoidance in centrosymmetric words.  More recently,  \cite{VERUM2014} defined the set of \emph{double lists} on $n$ letters to be $$\mathcal{D}_n = \{ \pi\pi \mid \pi \in \mathcal{S}_n\}.$$  In other words, a double list is a permutation of $[n]$ concatenated with itself.  We see immediately that $\left|\mathcal{D}_n\right|=n!$.  Cratty et. al. completely characterized the members of $\mathcal{D}_n$ that avoid a given permutation pattern of length at most 4.  In all of these cases, knowing the first half of a permutation or word determines the second half.  

In this paper we consider a type of word that exhibits a different symmetry.  In particular, let $$\mathcal{R}_n = \{ \pi\pi^r \mid \pi \in \mathcal{S}_n\}.$$  For example, $\mathcal{R}_3 = \left\{123321, 132231, 213312, 231132, 312213, 321123\right\}$.  We call $\mathcal{R}_n$ the set of \emph{reverse double lists} on $n$ letters.  Consider $$\mathcal{R}_n(\rho)= \{ \sigma \in \mathcal{R}_n \mid \sigma \text{ avoids } \rho\},$$ and let $\mathrm{r}_n(\rho)=\left|\mathcal{R}_n(\rho)\right|$.  We obtain a number of interesting enumeration sequences for $\{\mathrm{r}_n(\rho)\}_{n \geq 1}$ with connections to other combinatorial objects.  In Section \ref{S:monotone} we consider $\mathrm{r}_n(12\cdots k)$ for any positive integer $k$.  We give an analogue of the Erd\H{o}s--Szekeres Theorem for reverse double lists; in other words, we show that $\mathrm{r}_n(12\cdots k)=0$ for sufficiently large $n$.  We also enumerate the number of $12\cdots k$-avoiders of maximum length. In Section \ref{S:small}  we completely determine $\mathrm{r}_n(\rho)$ for $\rho \in \mathcal{S}_3 \cup \mathcal{S}_4$.  In Section \ref{S:five}, we give data that describes all Wilf classes for avoiding a pattern $\rho \in \mathcal{S}_5$; we also classify the enumeration generating functions for many of these Wilf classes. More generally, we characterize when $\mathrm{r}_n(\rho)$ has polynomial growth for a pattern $\rho$ of arbitrary length.

\section{Avoiding monotone patterns}\label{S:monotone}

In this section, we show that $\mathrm{r}_n(12\cdots k)=0$ for sufficiently large $n$.  Theorem \ref{T:erdos} gives a sharp bound on when $\mathrm{r}_n(12\cdots k)=0$, while Theorem \ref{T:syt} enumerates the maximal length avoiders of $12\cdots k$.

\begin{theorem}
$\mathrm{r}_n(12\cdots k)=0$ for $n \geq \binom{k}{2}+1$.
\label{T:erdos}
\end{theorem}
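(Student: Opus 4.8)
The plan is to reduce the statement to a bound on the length of the longest (strictly) increasing subsequence of the word $w=\pi\pi^r$, and then to run a refined Erd\H{o}s--Szekeres--Seidenberg pigeonhole argument adapted to the reverse--double--list structure. Writing $w=\pi_1\cdots\pi_n\pi_n\cdots\pi_1$, an occurrence of $12\cdots k$ in $w$ is exactly a strictly increasing subsequence of length $k$, so it suffices to prove that if $w$ has no increasing subsequence of length $k$, then $n\le\binom{k}{2}$; the theorem is the contrapositive.

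The first observation I would isolate is how increasing subsequences of $w$ relate to monotone subsequences of $\pi$. A value $v$ occupies position $p(v)$ in the first copy of $\pi$ and position $2n+1-p(v)$ in the second copy, and every first--half position precedes every second--half position. Since reading $\pi^r$ left to right reverses $\pi$, a \emph{decreasing} subsequence of $\pi$ becomes an \emph{increasing} subsequence inside the second half. Hence an increasing subsequence of $w$ is built by concatenating an increasing subsequence of $\pi$ drawn from small values with a decreasing subsequence of $\pi$ drawn from large values, the only compatibility condition being that the largest value used on the left is strictly smaller than the smallest value used on the right.

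For each $v\in[n]$ I would set $\mathrm{up}(v)$ equal to the length of the longest increasing subsequence of $\pi$ ending in value $v$ (so $v$ is the top of the run) and $\mathrm{dn}(v)$ equal to the length of the longest decreasing subsequence of $\pi$ ending in value $v$ (so $v$ is the bottom of the run); both are at least $1$. The standard Seidenberg argument shows $v\mapsto(\mathrm{up}(v),\mathrm{dn}(v))$ is injective: given $u\ne v$, whichever occurs earlier in $\pi$ can be extended by the later one, forcing $\mathrm{up}$ to strictly increase when the earlier value is the smaller and $\mathrm{dn}$ to strictly increase when it is the larger. The crucial new ingredient is the inequality $\mathrm{up}(v)+\mathrm{dn}(v)\le L+1$, where $L$ denotes the length of the longest increasing subsequence of $w$. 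To prove it I would splice the two runs at $v$: take the increasing run ending at $v$ inside the first copy, and then, inside the second copy, take the image of the decreasing run ending at $v$ with its bottom entry $v$ deleted. In the second copy that image is increasing and every one of its entries exceeds $v$, so appending it after the first--copy run (which ends in $v$) yields a genuine increasing subsequence of $w$ of length $\mathrm{up}(v)+\mathrm{dn}(v)-1$.

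Granting $L\le k-1$, every pair $(\mathrm{up}(v),\mathrm{dn}(v))$ consists of positive integers summing to at most $k$, and there are exactly $\binom{k}{2}$ such pairs; injectivity then forces $n\le\binom{k}{2}$, which is precisely the contrapositive. I expect the main obstacle to be the splicing inequality: one must track carefully which extreme of each monotone run sits at the junction between the two copies (the increasing run must meet it at its maximum and the reversed decreasing run at its minimum) and check that deleting exactly one shared entry both restores strictness across the junction and leaves the chosen positions correctly ordered in $w$. The injectivity and the lattice--point count are routine once that inequality is in hand.
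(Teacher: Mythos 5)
Your proof is correct and is essentially the same as the paper's: both use Seidenberg's injective labeling $v \mapsto (\mathrm{up}(v),\mathrm{dn}(v))$, splice the increasing run ending at $v$ in $\pi$ with the reversed decreasing run (minus the shared entry $v$) in $\pi^r$ to produce an increasing subsequence of length $\mathrm{up}(v)+\mathrm{dn}(v)-1$ in $\sigma$, and then count the $\binom{k}{2}$ admissible label pairs. The only cosmetic difference is that you index by value while the paper indexes by position, which is immaterial for a permutation.
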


\begin{proof}
Consider $\sigma =\pi\pi^r \in \mathcal{R}_n$.  Following Seidenberg's proof of the Erd\H{o}s--Szekeres Theorem in \cite{S59}, for $1 \leq i \leq n$, let $a_i$ be the length of the longest increasing subsequence of $\pi$ ending in $\pi_i$ and let $b_i$ be the length of the longest decreasing subsequence of $\pi$ ending in $\pi_i$.  By definition, $1 \leq a_i, b_i \leq n$.  Further, if $i \neq j$, then $(a_i, b_i) \neq (a_j, b_j)$, since if $\pi_i<\pi_j$ then $a_i < a_j$ and if $\pi_i > \pi_j$ then $b_i < b_j$.  Finally, for all $i$, the increasing subsequence of length $a_i$ in $\pi$ ending at $\pi_i$ followed by the reversal of the decreasing subsequence of length $b_i$ in $\pi$ ending at $\pi_i$, minus the digit $\pi_i$ in $\pi^r$ forms an increasing subsequence of length $a_i+b_i-1$ in $\sigma$.  If $\sigma \in \mathcal{R}_n(12\cdots k)$, it must be the case that $a_i+b_i-1 < k$ for all $i$.  There are $\binom{k}{2}$ distinct pairs of positive integers where $a_i+b_i-1 < k$, so we have that $\mathrm{r}_n(12\cdots k) = 0$ for $n \geq \binom{k}{2}+1$.
\end{proof}

In fact, this bound is sharp. Let $J_\ell$ be the decreasing permutation of length $\ell$.  Also, let $\alpha \oplus \beta$ denote the direct sum of permutations $\alpha=\alpha_1 \cdots \alpha_n$ and $\beta=\beta_1\cdots \beta_m$, i.e. $$(\alpha \oplus \beta)_i = \begin{cases}
\alpha_i & 1 \leq i \leq n\\
n+\beta_{i-n} & n+1 \leq i \leq n+m
\end{cases}.$$ Then $\pi=J_{k-1} \oplus J_{k-2} \oplus \cdots \oplus J_2 \oplus J_1$ is a permutation of length $\binom{k}{2}$ such that $\pi\pi^r \in \mathcal{R}_n(12\cdots k)$.

Now that we have a sharp bound on when $\mathrm{r}_n(12\cdots k)=0$, a natural question is: how many maximal $12\cdots k$-avoiders are there?  This question is most easily answered using the Robinson--Schensted correspondence between permutations and pairs of standard Young tableaux of the same shape.  Recall that the Robinson--Schensted correspondence maps $a \mapsto \left(\ytableausetup{centertableaux}
\begin{ytableau}
a
\end{ytableau},\ytableausetup{centertableaux}
\begin{ytableau}
1\end{ytableau}\right)$.
Now, given the pair of tableau $(P,Q)$ for $\pi_1\cdots \pi_{n-1}$, we insert $\pi_n$ in the following way: 
\begin{enumerate}
\item[(a)] If $\pi_n$ is larger than all entries of row 1 of $P$, then append $\pi_n$ to the end of row 1 of $P$ and append $n$ to the end of row 1 of $Q$.  
\item[(b)] Otherwise, find the first entry $i$ of row 1 of $P$ that is larger than $\pi_n$, replace this entry with $\pi_n$ and now repeat steps (a) and (b) by trying to insert $i$ into row 2, bumping if necessary.  When we finally have an entry that is added to the end of a row, insert a box in the corresponding place in $Q$ with entry $n$.  
\end{enumerate}
In general, we write $(P(\pi),Q(\pi))$ for the pair of tableau corresponding to $\pi$.  For example, the steps of this bumping algorithm for the permutation 452316 are shown in Figure \ref{F:rsk}.

\begin{figure}[hbt]
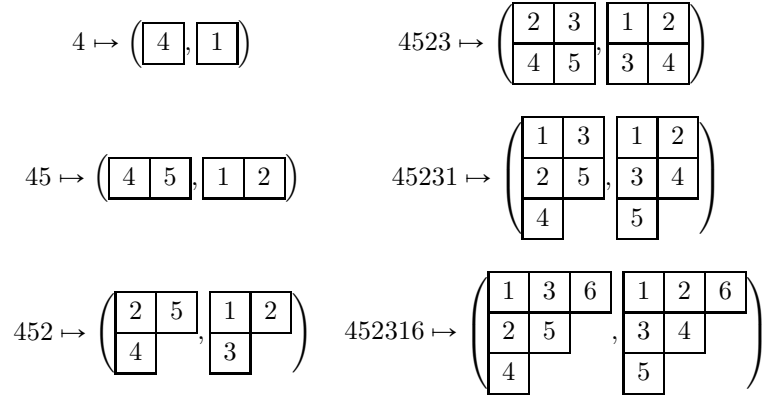

\begin{center}
\begin{tabular}{cc}

$4 \mapsto \left(\ytableausetup{centertableaux}
\begin{ytableau}
4
\end{ytableau},\ytableausetup{centertableaux}
\begin{ytableau}
1\end{ytableau}\right)$

&
$4523 \mapsto \left(\ytableausetup{centertableaux}
\begin{ytableau}
2&3\\
4&5
\end{ytableau},\ytableausetup{centertableaux}
\begin{ytableau}
1&2\\
3&4
\end{ytableau}\right)$\\

\phantom{X}&\\

$45 \mapsto \left(\ytableausetup{centertableaux}
\begin{ytableau}
4&5
\end{ytableau},\ytableausetup{centertableaux}
\begin{ytableau}
1&2
\end{ytableau}\right)$

&

$45231 \mapsto \left(\ytableausetup{centertableaux}
\begin{ytableau}
1&3\\
2&5\\
4
\end{ytableau},\ytableausetup{centertableaux}
\begin{ytableau}
1&2\\
3&4\\
5
\end{ytableau}\right)$\\

\phantom{X}&\\

$452 \mapsto \left(\ytableausetup{centertableaux}
\begin{ytableau}
2&5\\
4
\end{ytableau},\ytableausetup{centertableaux}
\begin{ytableau}
1&2\\
3
\end{ytableau}\right)$

&

$452316 \mapsto \left(\ytableausetup{centertableaux}
\begin{ytableau}
1&3&6\\
2&5\\
4
\end{ytableau},\ytableausetup{centertableaux}
\begin{ytableau}
1&2&6\\
3&4\\
5
\end{ytableau}\right)$
\end{tabular}
\end{center}
\caption{The Robinson--Schensted correspondence applied to the permutation 452316}
\label{F:rsk}
\end{figure}

In this correspondence, the number of rows of $P(\pi)$ gives the length of the longest decreasing subsequence of $\pi$, and the number of columns of $P(\pi)$ gives the length of the longest increasing subsequence of $\pi$.  If $\sigma=\pi\pi^r \in \mathcal{R}_{\binom{k}{2}}(12\cdots k)$, we expect both the longest increasing subsequence and the longest decreasing subsequence of $\pi$ to have length less than $k$.  But more can be said.  Consider the following result of Greene:

\begin{theorem}[\cite{G74}, Theorem 3.1]
Let $\pi$ be a permutation, let $P(\pi)$ have $k$ rows and let $\lambda_i$ denote the length of the $i$th row of $P(\pi)$.  Then for all $1 \leq i \leq k$, the maximum size of the union of $i$ increasing subsequences in $\pi$ is equal to $\lambda_1+\lambda_2+\cdots + \lambda_i$.
\label{T:lis}
\end{theorem}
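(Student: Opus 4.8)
The plan is to prove $I_i(\pi)=\lambda_1+\cdots+\lambda_i$ by reducing an arbitrary permutation to a canonical representative on which both sides are transparent, using Knuth equivalence. Throughout, write $I_i(w)$ for the maximum total size of a union of $i$ increasing subsequences of a word $w$, so that the assertion is precisely $I_i(\pi)=\lambda_1+\cdots+\lambda_i$ for $1\le i\le k$. Since the right-hand side depends only on the shape $\lambda$ of $P(\pi)$, the natural strategy is to show that the left-hand side is constant on Knuth classes and then evaluate it on the reading word of the tableau.

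First I would establish that $I_i$ is invariant under the elementary Knuth transformations $y\,x\,z \equiv y\,z\,x$ (for $x<y\le z$) and $x\,z\,y \equiv z\,x\,y$ (for $x\le y<z$), which generate Knuth equivalence. Each such relation transposes one adjacent pair of letters while keeping a third, intermediate-valued witness letter fixed. The claim is that any family of $i$ increasing subsequences realizing $I_i$ in one word can be rerouted around the transposed pair to give an equally large family in the other word, the witness letter being exactly what allows one to patch a subsequence whose monotonicity is broken by the swap. Granting this invariance, and combining it with the standard fact (provable by induction on the insertion process, or as a consequence of Knuth's theorem) that $\pi$ is Knuth equivalent to the row reading word $w_T$ of its insertion tableau $T=P(\pi)$, read from the bottom row up, the problem reduces to computing $I_i(w_T)$.

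I would then compute $I_i(w_T)$ directly, where both inequalities are immediate from the tableau structure. For the lower bound, the rows $R_1,\dots,R_i$ of $T$ appear in $w_T$ as disjoint, contiguous, left-to-right increasing blocks, hence constitute $i$ disjoint increasing subsequences whose union has size $\lambda_1+\cdots+\lambda_i$. For the upper bound, I would partition $w_T$ into decreasing subsequences along the \emph{columns} of $T$: reading column $c$ from the bottom row up gives a decreasing subsequence $D_c$ of length $\lambda'_c$ (the $c$th conjugate part), and these $\lambda_1$ columns partition all of $w_T$. Because an increasing and a decreasing subsequence can share at most one letter, any $i$ increasing subsequences meet $D_c$ in at most $\min(i,\lambda'_c)$ letters, whence
\[
I_i(w_T)\;\le\;\sum_{c=1}^{\lambda_1}\min(i,\lambda'_c)\;=\;\lambda_1+\lambda_2+\cdots+\lambda_i ,
\]
since $\min(i,\lambda'_c)$ counts exactly the boxes of column $c$ lying in the top $i$ rows. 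The two bounds together give equality, and transporting back along the Knuth class yields the theorem.

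The reading-word computation is routine, and the reduction is standard; the main obstacle is the Knuth invariance of $I_i$. Its proof is an adjacency-by-adjacency case analysis that tracks how an optimal family of increasing subsequences must be altered when the relevant pair of letters is transposed, and one must verify in both directions of each relation that the intermediate-valued witness always permits such an alteration without loss. The delicate cases are those in which the two transposed positions are used by the same chosen subsequence versus by two different ones; checking that no covered letters are lost in either situation is where the care lies.
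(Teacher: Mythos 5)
The first thing to note is that the paper contains no proof of this statement for you to be compared against: it is imported verbatim as Theorem 3.1 of \cite{G74} (Greene's theorem) and used as a black box in the proof of Lemma \ref{L:staircase}. Judged on its own merits, your proposal follows the classical route --- essentially Greene's own: show the statistic $I_i$ is constant on Knuth classes, reduce to the row reading word $w_T$ of $P(\pi)$, and compute there. Your endgame on the reading word is correct in both directions: the first $i$ rows of $T$ sit inside $w_T$ as disjoint increasing blocks, giving the lower bound, and the column decomposition into decreasing subsequences $D_c$, together with the fact that an increasing subsequence meets a decreasing one in at most one letter, gives $I_i(w_T)\le\sum_{c}\min(i,\lambda'_c)=\lambda_1+\cdots+\lambda_i$, since $\min(i,\lambda'_c)$ counts the boxes of column $c$ in the top $i$ rows. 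The reduction facts you invoke (the elementary Knuth relations, and Knuth's theorem that $\pi$ is Knuth equivalent to the reading word of its insertion tableau) are standard and correctly stated.

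The substantive caveat is that, as written, the proof is incomplete: the Knuth invariance of $I_i$ --- which you correctly flag as the main obstacle --- is asserted with a plausible mechanism (reroute around the transposed pair using the intermediate-valued witness letter) but never carried out. That lemma is not routine bookkeeping; it is the entire content of the theorem, and the case analysis has real traps. In particular, when the two swapped letters are used by two \emph{different} subsequences of an optimal family, one must splice those two subsequences together across the swap and re-distribute their remaining letters, checking that no letter is lost or double-covered; and the argument is not symmetric between the two directions of each Knuth relation, so all four directions must be verified separately. Until that analysis is supplied (or cited, e.g.\ to \cite{G74} itself or to a standard treatment of the Robinson--Schensted correspondence), what you have is a correct and well-organized reduction of Greene's theorem to its pivotal lemma, not yet a proof of it.
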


We are ready to state a characterization of $P(\pi)$ if $\pi\pi^r \in \mathcal{R}_{\binom{k}{2}}(12\cdots k)$.

\begin{theorem}
$\mathrm{r}_{\binom{k}{2}}(12\cdots k)$ is equal to the number of pairs of standard Young tableaux $(P,Q)$ of shape $(k-1,k-2,\dots,1)$ where $P$ has increasing diagonals (i.e. the entry in row $r$ column $c$ is less than the entry in row $r-1$, column $c+1$ for $2 \leq r \leq k-1$ and $1 \leq c \leq k-2$).
\label{T:syt}
\end{theorem}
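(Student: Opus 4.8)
The plan is to use the Robinson--Schensted bijection $\pi \mapsto (P(\pi),Q(\pi))$, which identifies permutations of length $\binom{k}{2}$ with pairs of standard Young tableaux of a common shape $\lambda \vdash \binom{k}{2}$. The whole theorem then reduces to a single claim: for such $\pi$, the reverse double list $\pi\pi^r$ avoids $12\cdots k$ if and only if $\lambda=(k-1,k-2,\dots,1)$ and $P(\pi)$ has increasing diagonals, and -- crucially -- this condition does not involve $Q(\pi)$. Granting this, since $Q$ may be any standard Young tableau of the staircase shape, the bijection gives $\mathrm{r}_{\binom{k}{2}}(12\cdots k)=\left|\{(P,Q)\}\right|$ exactly as stated.

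First I would show that avoidance forces the staircase shape. Retaining the statistics $a_i,b_i$ from the proof of Theorem \ref{T:erdos}, avoidance gives $a_i+b_i\le k$ for every $i$; since the $\binom{k}{2}$ pairs $(a_i,b_i)$ are distinct and there are exactly $\binom{k}{2}$ lattice points with $a,b\ge 1$ and $a+b\le k$, the map $i\mapsto(a_i,b_i)$ is a bijection onto this triangle. In particular each level set $\{i:a_i=c\}$ is a decreasing subsequence of length $k-c$. As any increasing subsequence meets a decreasing subsequence at most once, a union of $i$ increasing subsequences has size at most $\sum_{c}\min(i,k-c)=ik-\binom{i+1}{2}$, with equality attained by the level sets of $b$; Theorem \ref{T:lis} then yields $\lambda_1+\cdots+\lambda_i=ik-\binom{i+1}{2}$ for all $i$, so $\lambda_i=k-i$ and $\lambda$ is the staircase.

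The substance of the proof is translating avoidance, for staircase-shaped $\pi$, into the diagonal condition on $P$. I would first record that an occurrence of $12\cdots k$ in $\pi\pi^r$ must split across the two halves as an increasing subsequence of $\pi$ followed by the reversal of a decreasing subsequence of $\pi$, all of whose values exceed those of the first block; hence the longest increasing subsequence of $\pi\pi^r$ equals $\max\{a_p+b_q:\pi_p<\pi_q\}$, and (as $\pi$ itself has monotone subsequences of length only $k-1$) $\pi\pi^r$ avoids $12\cdots k$ exactly when no pair $p,q$ satisfies $\pi_p<\pi_q$ and $a_p+b_q\ge k$. Two facts then convert this into a statement about $P$: the cell created when $\pi_i$ is inserted lies in row $b_i$, and the first row of $P$ satisfies $P_{1,c}=\min\{\pi_p:a_p\ge c\}$. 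Together these rewrite the no-bad-pair condition as a comparison, for each $c$, of $P_{1,c}$ against the largest value that ends a decreasing subsequence of length $k-c$.

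The main obstacle is the final identification of this system of comparisons with the inequalities $P_{r,c}<P_{r-1,c+1}$ that define increasing diagonals; this is where the off-diagonal entries of $P$, not merely its first row, must enter. I would attack it by induction along the anti-diagonals of the staircase, showing that a violation $P_{r,c}>P_{r-1,c+1}$ produces, via the increasing and decreasing subsequences read off the insertion histories of the two offending cells, a separated pair $p,q$ with $\pi_p<\pi_q$ and $a_p+b_q\ge k$, and conversely that any such pair forces a diagonal violation. Because the resulting characterization refers only to $P$, avoidance is independent of $Q$, which completes the reduction in the first paragraph and hence the count.
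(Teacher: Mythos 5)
Your reduction is set up correctly: the staircase-shape argument (distinct labels $(a_i,b_i)$ filling the triangle $a+b\le k$, level sets of $a$ as decreasing subsequences, and Theorem \ref{T:lis} forcing $\lambda_i=k-i$) is sound, and is in fact a cleaner route to Lemma \ref{L:staircase} than the paper's; likewise your observation that avoidance of $12\cdots k$ by $\pi\pi^r$ is equivalent to the absence of a pair $\pi_p<\pi_q$ with $a_p+b_q\ge k$ is correct (the "diagonal" junctions $a_i+b_i-1$ are dominated by such pairs). The problem is everything after that. First, one of the two "facts" you invoke to pass from this pairwise condition to the tableau $P$ is false: the cell created when $\pi_i$ is inserted does \emph{not} in general lie in row $b_i$. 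For $\pi=312$, inserting $\pi_3=2$ creates a cell in row $1$, while $b_3=2$ (the decreasing subsequence $32$). The row of the terminal cell of a bumping path has no such direct description, and this is precisely why the paper abandons the insertion algorithm as the bookkeeping device and instead develops Viennot's shadow lines (Lemmas \ref{L:labels} and \ref{L:shadow}) to control where values sit in $P(\pi)$.

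Second, and more fundamentally, the actual content of Theorem \ref{T:syt} --- the equivalence, for staircase-shaped $\pi$, between the no-bad-pair condition and the inequalities $P_{r,c}<P_{r-1,c+1}$ --- is not proved but only announced: "I would attack it by induction along the anti-diagonals, showing that a violation produces a separated pair \dots and conversely." That sentence is where the entire difficulty of the theorem lives. In the paper this step consumes the bulk of Section \ref{S:monotone}: Lemma \ref{L:labels} characterizes avoidance in terms of shadow lines, Lemma \ref{L:shadow} shows (via a delicate two-case analysis of adjacent points on a shadow line) that avoidance forces column $i$ of $P(\pi)$ to equal shadow line $i$, and the proof of Theorem \ref{T:syt} then runs an induction over the partial permutations $\pi^{(f)}$ built from the first $f$ shadow lines, using a bumping argument to show no decreasing subsequence can cross shadow lines when diagonals increase. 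The converse direction in particular (increasing diagonals $\Rightarrow$ avoidance) cannot be dispatched by reading off "insertion histories," since, as the counterexample above shows, insertion histories do not track the statistic $b_q$ that your bad pairs require. So the proposal is a correct reduction plus a plan whose key mechanism is broken; it does not yet constitute a proof.
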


First, we show that the $(k-1,k-2,\dots,1)$ shape of $P(\pi)$ is necessary for $\pi\pi^r \in \mathcal{R}_{\binom{k}{2}}(12\cdots k)$.

\begin{lemma}\label{L:staircase}
Suppose $\pi \in \mathcal{S}_{\binom{k}{2}}$. If $P(\pi)$ does not have the shape $(k-1,k-2,\dots,1)$, then $\pi\pi^r$ contains an increasing subsequence of length $k$.
\end{lemma}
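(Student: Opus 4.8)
The plan is to prove the contrapositive: if $\pi\pi^r$ has no increasing subsequence of length $k$, then $P(\pi)$ has shape exactly $\delta=(k-1,k-2,\dots,1)$. Since $|\delta|=\binom{k}{2}$ equals the number of boxes of $P(\pi)$, it suffices to show $\lambda:=\mathrm{shape}(P(\pi))$ equals $\delta$, and I will get this by squeezing $\lambda$ from both sides in the dominance (majorization) order. The entry point is the construction in the proof of Theorem~\ref{T:erdos}: each index $i$ yields an increasing subsequence of $\pi\pi^r$ of length $a_i+b_i-1$, so the avoidance hypothesis forces the numerical condition $a_i+b_i\le k$ for every $i$ (otherwise some index gives an increasing subsequence of length $\ge k$).

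First I would record two elementary facts about the statistics $a_i,b_i$. If $a_i=a_j=c$ for positions $i<j$, then $\pi_i>\pi_j$, since otherwise the increasing subsequence of length $c$ ending at $\pi_i$ extends by $\pi_j$; hence the indices with $a_i=c$ form a decreasing subsequence $D_c$, and evaluating at its rightmost element gives $b\ge |D_c|=:N_c$ there, so $c+N_c\le k$, i.e.\ $N_c\le k-c$. Symmetrically, the indices with $b_i=r$ form an increasing subsequence $D'_r$ of length $M_r$ with $r+M_r\le k$, so $M_r\le k-r$. In particular $\lambda_1\le k-1$ and $P(\pi)$ has at most $k-1$ rows, which also guarantees $N_{c}=0$ for $c\ge k$.

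The bridge to the shape is Greene's Theorem~\ref{T:lis}. The indices with $a_i\le c-1$ are covered by the $c-1$ decreasing subsequences $D_1,\dots,D_{c-1}$, so by the decreasing form of Greene's theorem (Theorem~\ref{T:lis} applied to $\pi^r$, whose insertion tableau has the conjugate shape $\lambda'$) their number is at most $\lambda'_1+\cdots+\lambda'_{c-1}$; hence $\#\{i:a_i\ge c\}\ge \sum_{c'\ge c}\lambda'_{c'}$. Combining with $\#\{i:a_i\ge c\}=\sum_{c'\ge c}N_{c'}\le\sum_{c'=c}^{k-1}(k-c')=\binom{k-c+1}{2}$ gives $\sum_{c'\ge c}\lambda'_{c'}\le\binom{k-c+1}{2}=\sum_{c'\ge c}\delta_{c'}$ for every $c$. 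As $\lambda$ and $\delta$ have the same size, this says $\lambda'\trianglerighteq\delta$ in the dominance order. The mirror-image argument, using $b_i$, the bounds $M_r\le k-r$, and Theorem~\ref{T:lis} directly, yields $\sum_{r'\ge r}\lambda_{r'}\le\binom{k-r+1}{2}$ for every $r$, that is $\lambda\trianglerighteq\delta$.

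Finally I would invoke self-conjugacy of the staircase: $\delta'=\delta$, and since conjugation reverses dominance, $\lambda'\trianglerighteq\delta$ is equivalent to $\lambda\trianglelefteq\delta$. Together with $\lambda\trianglerighteq\delta$ this forces $\lambda=\delta$, completing the contrapositive. I expect the main obstacle to be exactly this translation step: the hypothesis is a condition on the word $\pi\pi^r$, while the conclusion concerns the diagram of $P(\pi)$, and the two are only loosely linked because $\max_i(a_i+b_i)$ is not a function of the shape alone (for instance $231$ and $312$ share the shape $(2,1)$ but give different values $3$ and $4$). The dominance squeeze is what allows the per-letter bounds $N_c\le k-c$ and $M_r\le k-r$ to pin down the shape despite this, and getting both inequalities to point at the self-conjugate staircase is the delicate part.
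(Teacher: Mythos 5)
Your proof is correct, and it runs on the same two engines as the paper's argument---the Erd\H{o}s--Szekeres statistics $(a_i,b_i)$ with $a_i+b_i\le k$ extracted from the proof of Theorem~\ref{T:erdos}, and Greene's Theorem~\ref{T:lis}---but it assembles them in a genuinely different way. The paper localizes: it picks the first row $r$ with $\lambda_r\ge k-r+1$, uses Greene's theorem to produce $r$ disjoint increasing subsequences of lengths $\lambda_1,\dots,\lambda_r$, and then plays the $r$ elements so obtained, each ending an increasing subsequence of length $k-r+1$, against the supply of labels $(a,b)$, arguing entirely on the increasing side. You globalize: the per-class bounds $N_c\le k-c$ and $M_r\le k-r$ are turned, by applying Greene's theorem once to $\pi$ and once to $\pi^r$ (together with Schensted's fact that reversal conjugates the shape), into partial-sum inequalities asserting that both $\lambda$ and $\lambda'$ dominate the staircase $\delta$; self-conjugacy of $\delta$ and the fact that conjugation reverses dominance then squeeze $\lambda=\delta$. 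The two-sidedness is not a stylistic choice but the substantive difference: every shape dominated by the staircase (for instance $(2,2,2)$ when $k=4$) satisfies all of the increasing-side partial-sum inequalities, so no count of the paper's type can eliminate such shapes without input from the decreasing side; it is your $b$-side inequality, $\lambda_2+\lambda_3=4>3=\binom{3}{2}$, that kills $(2,2,2)$. Indeed, read literally, the paper's step ``there are at most $r-1$ such elements'' is valid for elements whose longest increasing subsequence ending there has length exactly $k-r+1$, whereas its Greene construction only produces elements for which that length is at least $k-r+1$, and under avoidance there can be as many as $\binom{r}{2}$ of those. Your bookkeeping, which never conflates ``exactly $c$'' with ``at least $c$'' and compensates by squeezing from both sides, avoids this mismatch entirely; in that sense your route is not only different from the paper's but arguably repairs a gap in its write-up.
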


\begin{proof}
We know that $P(\pi)$ has at most $k-1$ rows; otherwise, $\pi$ contains a decreasing subsequence of length $k$, and $\pi^r$ contains an increasing subsequence of length $k$.  Now, if $\lambda_j$ is the length of row $j$ of $P(\pi)$, let $r$ be the first row such that $\lambda_j\geq k-j+1$.  We know from Theorem \ref{T:lis} that the maximum size of the union of $r$ increasing subsequences in $\pi$ is equal to $\lambda_1+\cdots + \lambda_r$.  In fact, we can find disjoint increasing subsequences of lengths $\lambda_1, \lambda_2, \dots, \lambda_r$ in $\pi$.  This means that there are at least $r$ distinct elements of $\pi$ that are the last element in an increasing subsequence of length $k-r+1$.  However, from the proof of Theorem \ref{T:erdos}, we know that there are at most $r-1$ such elements if $\pi \pi^r$ avoids $12 \cdots k$.  Therefore, $P(\pi)$ and $Q(\pi)$ have shape  $(k-1,k-2,\dots,1)$  whenever $\pi\pi^r \in \mathcal{R}_{\binom{k}{2}}(12\cdots k)$.
\end{proof}

Notice while $P(\pi)$ having the shape $(k-1,k-2,\dots,1)$ is necessary for $\pi\pi^r \in \mathcal{R}_{\binom{k}{2}}(12\cdots k)$, it is not sufficient.  For example, when $\pi=246513$, $P(\pi)$ has the shape $(3,2,1)$, but $\pi\pi^r$ contains a 1234 pattern, realized by the digits 1 and 3 in $\pi$ together with the digits 5 and 6 in $\pi^r$.  However, by Lemma \ref{L:staircase}, we may restrict our attention to $$\mathcal{S}^*_{\binom{k}{2}} = \left\{ \pi \in \mathcal{S}_{\binom{k}{2}} \middle| P(\pi) \text{ has shape } (k-1,k-2,\dots, 1)\right\}.$$  To prove Theorem \ref{T:syt}, we must show that for $\pi \in S^*_{\binom{k}{2}}$, $\pi \pi^r$ avoids $12\cdots k$ if and only if $P(\pi)$ has increasing diagonals.

Before we finish the proof of Theorem \ref{T:syt}, we recall a construction of \cite{V77} that relates the entries of $P(\pi)$ to the graph of $\pi$.  Consider the points $(i,\pi_i)$ for $1 \leq i \leq n$.  The \emph{shadow} of point $(x_i, y_i)$, denoted $S((x_i,y_i))$, is $S((x_i,y_i)):=\left\{(u,v) \in \mathbb{R}^2 \middle| u \geq x_i \text{ and } v \geq y_i\right\}$.  In other words, the shadow is the collection of all points above and to the right of the original point.  Consider the points of the graph of $\pi$ that are not in the shadow of any other point.  The boundary of the union of their shadows is the first shadow line $L_1$ of $\pi$.  To form the second shadow line (and subsequent shadow lines), remove the points on the first shadow line from the graph of $\pi$, and repeat.  An example, showing the shadow lines of $\pi=452316$, is given in Figure \ref{F:shadow}.

\begin{figure}

\begin{center}
\begin{tikzpicture}
\draw[step=1cm,gray,very thin] (0,0) grid (7,7);
\foreach \x in {1,2,3,4,5,6}
    \draw (\x cm,1pt) -- (\x cm,-1pt) node[anchor=north] {$\x$};
\foreach \y in {1,2,3,4,5,6}
    \draw (1pt,\y cm) -- (-1pt,\y cm) node[anchor=east] {$\y$};
    \draw[line width=0.1cm] (1,7) -- (1,4)--(3,4)--(3,2)--(5,2)--(5,1)--(7,1);
    \draw[line width=0.1cm] (2,7)--(2,5)--(4,5)--(4,3)--(7,3);
    \draw[line width=0.1cm] (6,7)--(6,6)--(7,6);
					\fill[black] (1,4) circle (.2cm);
					\fill[black] (2,5) circle (.2cm);
					\fill[black] (3,2) circle (.2cm);
					\fill[black] (4,3) circle (.2cm);
					\fill[black] (5,1) circle (.2cm);
					\fill[black] (6,6) circle (.2cm);
\end{tikzpicture}
\end{center}
\caption{The shadow lines of $\pi=452316$}
\label{F:shadow}
\end{figure}
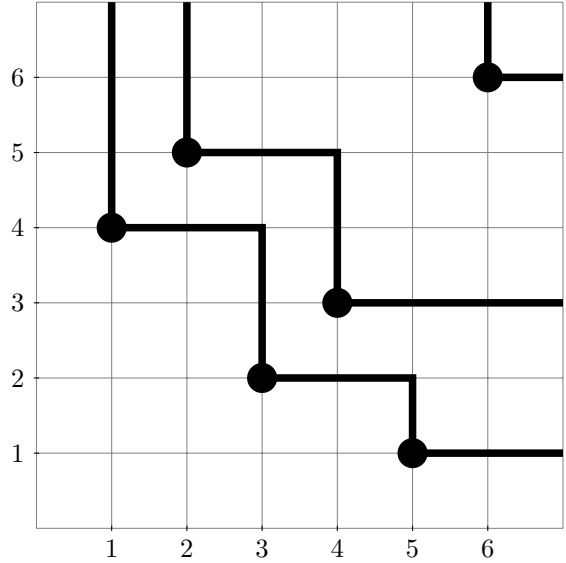

Viennot showed that the $y$-coordinates of the rightmost point on each shadow line are the entries in the first row of $P(\pi)$.  Indeed, using the Robinson--Schensted correspondence, we saw in Figure \ref{F:rsk} that $$P(452316) = \ytableausetup{centertableaux}
\begin{ytableau}
1&3&6\\
2&5\\
4
\end{ytableau}$$
and the $y$-coordinates of the rightmost points on the shadow lines in Figure \ref{F:shadow} are 1, 3, and 6.  The second row of $P(\pi)$ can be found in a similar way: mark the corners of the shadow lines where there is no point of the original permutation, as shown by the squares in the left side of Figure \ref{F:shadow2}.  Then, using these corners as the new permutation graph, draw shadow lines again, as shown in the right side of Figure \ref{F:shadow2}.  The $y$-coordinates of rightmost points on each of the new shadow lines are the second row of $P(\pi)$; in this case the entries are 2 and 5. We can iterate this shadow line process to obtain all rows of $P(\pi)$. Shadow lines are the main tool in our proof of Theorem \ref{T:syt} that follows.  We proceed with a series of lemmas.

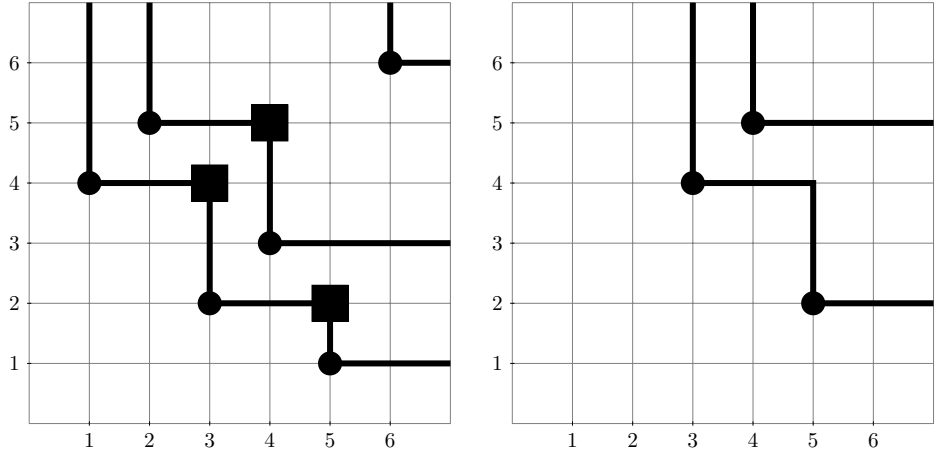
\begin{figure}

\begin{center}
\begin{tabular}{cc}
\scalebox{0.8}{
\begin{tikzpicture}
\draw[step=1cm,gray,very thin] (0,0) grid (7,7);
\foreach \x in {1,2,3,4,5,6}
    \draw (\x cm,1pt) -- (\x cm,-1pt) node[anchor=north] {$\x$};
\foreach \y in {1,2,3,4,5,6}
    \draw (1pt,\y cm) -- (-1pt,\y cm) node[anchor=east] {$\y$};
    \draw[line width=0.1cm] (1,7) -- (1,4)--(3,4)--(3,2)--(5,2)--(5,1)--(7,1);
    \draw[line width=0.1cm] (2,7)--(2,5)--(4,5)--(4,3)--(7,3);
    \draw[line width=0.1cm] (6,7)--(6,6)--(7,6);
					\fill[black] (1,4) circle (.2cm);
					\fill[black] (2,5) circle (.2cm);
					\fill[black] (3,2) circle (.2cm);
					\fill[black] (4,3) circle (.2cm);
					\fill[black] (5,1) circle (.2cm);
					\fill[black] (6,6) circle (.2cm);
					\draw [fill] (2.7,3.7) rectangle (3.3,4.3);
					\draw [fill] (4.7,1.7) rectangle (5.3,2.3);
					\draw [fill] (3.7,4.7) rectangle (4.3,5.3);
\end{tikzpicture}}&
\scalebox{0.8}{\begin{tikzpicture}
\draw[step=1cm,gray,very thin] (0,0) grid (7,7);
\foreach \x in {1,2,3,4,5,6}
    \draw (\x cm,1pt) -- (\x cm,-1pt) node[anchor=north] {$\x$};
\foreach \y in {1,2,3,4,5,6}
    \draw (1pt,\y cm) -- (-1pt,\y cm) node[anchor=east] {$\y$};
    \draw[line width=0.1cm] (3,7) -- (3,4)--(5,4)--(5,2)--(7,2);
    \draw[line width=0.1cm] (4,7)--(4,5)--(7,5);
					\fill[black] (3,4) circle (.2cm);
					\fill[black] (5,2) circle (.2cm);
					\fill[black] (4,5) circle (.2cm);
\end{tikzpicture}}
\end{tabular}
\end{center}
\caption{Iterating the shadow line procedure to obtain the second row of $P(\pi)$ for $\pi=452316$}
\label{F:shadow2}
\end{figure}

Next, we characterize the permutations $\pi$ for which $\pi \pi^r \in \mathcal{R}_{\binom{k}{2}}(12\cdots k)$ in terms of shadow lines.

\begin{lemma}\label{L:labels}
Suppose $\pi \in \mathcal{S}_{\binom{k}{2}}$. Then $\pi\pi^r$ avoids $12 \cdots k$ if and only if 
\begin{enumerate}
\item shadow line $i$ of $\pi$ contains $k-i$ points for all $1 \leq i \leq k-1$, and 
\item the $j$th point on the $i$th shadow line of $\pi$ is the unique point such that the longest increasing subsequence ending in that point has length $i$ and the longest decreasing subsequence ending in that point has length $j$.
\end{enumerate}
\end{lemma}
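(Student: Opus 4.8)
The plan is to reduce the statement to a clean criterion phrased in terms of the quantities $a_j$ and $b_j$ introduced in the proof of Theorem \ref{T:erdos} -- the lengths of the longest increasing and longest decreasing subsequences of $\pi$ ending at $\pi_j$. The first step is to sharpen Theorem \ref{T:erdos} to a biconditional: $\pi\pi^r$ avoids $12\cdots k$ if and only if $a_j+b_j\le k$ for every $j$. The forward implication is exactly the computation already in the proof of Theorem \ref{T:erdos}, since each point yields an increasing subsequence of $\sigma=\pi\pi^r$ of length $a_j+b_j-1$. For the converse I would show that every increasing subsequence of $\pi\pi^r$ is captured by a single point, i.e. that $\mathrm{LIS}(\pi\pi^r)=\max_j(a_j+b_j)-1$. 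Given an increasing subsequence, split it into the part $I'$ lying in the first copy $\pi$ and the part $I''$ lying in $\pi^r$; then $I''$ is the reversal of a decreasing subsequence $D$ of $\pi$. If $\pi_i$ is the last element of $I'$ and $\pi_s$ is the smallest element of $D$, then $\pi_i<\pi_s$ and so $i\ne s$. A short case analysis on whether $i<s$ or $i>s$ then produces a single point with $a+b$ exceeding the length of the subsequence (extend $I'$ by $\pi_s$ when $i<s$, or append $\pi_i$ to $D$ when $i>s$), which gives the bound.

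Next I would identify the shadow lines of $\pi$ with the level sets of the statistic $a$. Using Viennot's construction, I claim the $i$th shadow line consists exactly of the points $\pi_j$ with $a_j=i$. This follows by induction: the points with $a_j=1$ are precisely those with no other point below and to the left, which are exactly the points contained in no shadow, so they form $L_1$; and after deleting every point with $a_j<i$, a point has $a_j=i$ if and only if it is now minimal, because the set of $a$-values occurring strictly below and to the left of a given point $\pi_j$ is exactly $\{1,\dots,a_j-1\}$. I would also record the monotonicity needed for condition (2): along a single shadow line the positions increase while the values decrease, so consecutive points lie strictly below and to the right of one another; appending such a point to a longest decreasing subsequence shows that $b$ strictly increases as the line is read from left to right.

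With these tools the two directions fall out by counting. For the forward direction, if $\pi\pi^r$ avoids $12\cdots k$ then the $\binom{k}{2}$ pairs $(a_j,b_j)$ are distinct (as already noted in the proof of Theorem \ref{T:erdos}) and all satisfy $a_j+b_j\le k$; since there are exactly $\binom{k}{2}$ such pairs, the realized pairs are precisely $\{(a,b):a,b\ge 1,\ a+b\le k\}$. Hence for each $i$ there are exactly $k-i$ points with $a_j=i$, i.e. shadow line $i$ has $k-i$ points, which is condition (1); moreover their $b$-values are exactly $1,\dots,k-i$, so by the monotonicity of $b$ the $j$th point of line $i$ has $b=j$, and distinctness makes it the unique point with these two statistics, which is condition (2). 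Conversely, if (1) and (2) hold, then the $j$th point of shadow line $i$ has longest increasing subsequence $i$ and longest decreasing subsequence $j$ with $1\le j\le k-i$ by (1), so every point satisfies $a+b=i+j\le k$, and the converse established in the first paragraph shows $\pi\pi^r$ avoids $12\cdots k$.

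I expect the main obstacle to be the converse half of the sharpened Erd\H{o}s--Szekeres criterion in the first paragraph, namely showing that no increasing subsequence of $\pi\pi^r$ is longer than $\max_j(a_j+b_j)-1$. The subtlety is that an arbitrary increasing subsequence of $\sigma$ need not be centered at a single point of $\pi$ in the way the subsequences built in Theorem \ref{T:erdos} are; the case analysis comparing the position of the end of $I'$ with that of the smallest element of $D$ is precisely what transfers an arbitrary increasing subsequence back to a single-point bound. The shadow-line facts, while requiring some care, are essentially Viennot's and should be routine once the level-set description of $a$ is in hand.
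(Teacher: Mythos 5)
Your proposal is correct and takes essentially the same route as the paper's proof: both rest on the pigeonhole bijection between the $\binom{k}{2}$ points and the labels $(a,b)$ with $a+b\le k$ from Theorem \ref{T:erdos}, combined with identifying shadow line $i$ as the level set $\{a=i\}$ and using the monotonicity of $b$ along a line. The only differences are ones of detail and organization: you supply explicit proofs (the sharpened Erd\H{o}s--Szekeres converse via the split $I'$, $I''$ case analysis, and the level-set induction) for facts the paper merely asserts, and you argue the backward direction directly rather than by contrapositive.
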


\begin{proof}
Suppose $\pi \in \mathcal{S}_{\binom{k}{2}}$ and $\pi \pi^r$ avoids $12\cdots k$.  Using the labeling from Theorem \ref{T:erdos} there is a unique point in $\pi$ with each label $(a,b)$ for $2 \leq a+b \leq k$.   For any permutation $\pi$, by construction, each point on shadow line $i$ has $a=i$.  Moreover, the $j$th point on shadow line $i$ has $b \geq j$.  Since we know there is exactly one point with each label, it must be the case that the $j$th point on shadow line $i$ has label $(i,j)$.  This labeling also results in exactly $k-i$ points on the $i$th shadow line, as desired.

Suppose, on the other hand that $\pi \pi^r$ contains $12 \cdots k$.  This means that the labeling from Theorem \ref{T:erdos} results in a point $\pi_{\ell}$ with the label $(a,b)$ where $a+b \geq k+1$.  By the shadow line construction, this point must be on shadow line $a$.  If $\pi_{\ell}$ is really the $b$th point on shadow line $a$, then we have violated condition (1) since shadow line $a$ has at least $b \geq k+1-a$ points.  If $\pi_{\ell}$ is not the $b$th point on shadow line $a$, then we have violated condition (2).
\end{proof}

As in Viennot's general case, these shadow lines are a tool to better understand the structure of $P(\pi)$ in the context of $\mathcal{R}_{\binom{k}{2}}(12\cdots k)$.

\begin{lemma}\label{L:shadow}
If $\pi\pi^r$ avoids $12 \cdots k$, then column $i$ of $P(\pi)$ consists of the entries from the $i$th shadow line of $\pi$.  
\end{lemma}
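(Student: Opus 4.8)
The plan is to prove the following ordered refinement of the statement by induction on $k$: if $\pi\pi^r$ avoids $12\cdots k$, then reading the $i$th shadow line of $\pi$ from its rightmost point toward its leftmost point yields exactly column $i$ of $P(\pi)$ read from top to bottom. Since the lemma only asserts equality of sets, this ordered version is more than enough, and it is the form that meshes with Viennot's iterated construction. Two bookkeeping facts make the counting work. By Lemma~\ref{L:labels} shadow line $i$ has exactly $k-i$ points, carrying all the labels $(i,1),\dots,(i,k-i)$ with the rightmost point carrying $(i,k-i)$; and by Lemma~\ref{L:staircase} the shape of $P(\pi)$ is $(k-1,\dots,1)$, so column $i$ also has exactly $k-i$ cells. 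Hence it suffices to match these two lists of $k-i$ values in order.

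For the base of the induction I would invoke Viennot's theorem as already recorded in the text: the $y$-coordinates of the rightmost points of the shadow lines of $\pi$ form the first row of $P(\pi)$, the rightmost point of shadow line $i$ giving the top entry $P_{1,i}$ of column $i$. This settles the topmost cell of every column. To reach the cells below I would pass to the second generation of the construction (the marked corners in Figure~\ref{F:shadow2}): running the shadow procedure on the corner set, the rightmost points of the new lines give the second row of $P(\pi)$, and iterating recovers all lower rows, so the tableau built from the corners is exactly $P(\pi)$ with its first row deleted. A direct coordinate check shows the corner lying between the $t$th and $(t{+}1)$st points of shadow line $i$ has $y$-coordinate equal to that of the $t$th point; thus the corners coming from line $i$ have $y$-coordinates equal to those of line $i$ with its rightmost point removed. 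Consequently, if the second generation "remembers" which original shadow line each corner came from, the inductive hypothesis applied to the corner configuration will reproduce the lower part of each column $i$, matching shadow line $i$ below its rightmost point, and combining with the base case finishes the induction.

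The heart of the argument, and the step I expect to be the main obstacle, is therefore to show that the corner configuration is again rigid: that it is order-isomorphic to the graph of a permutation $\pi'\in\mathcal{S}_{\binom{k-1}{2}}$ with $\pi'(\pi')^r$ avoiding $12\cdots(k-1)$, and that under this identification the $i$th shadow line of $\pi'$ is precisely the set of corners coming from the $i$th shadow line of $\pi$. Concretely, I would prove that the corner between the $t$th and $(t{+}1)$st points of line $i$ inherits, within the corner set, a longest increasing subsequence ending there of length $i$ and a longest decreasing subsequence ending there of length $t$, i.e. it acquires the label $(i,t)$; the corner labels then range over exactly the rigid set $\{(a,b):a+b\le k-1\}$, so by the other direction of Lemma~\ref{L:labels} the corner permutation $\pi'$ satisfies the inductive hypothesis. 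The lower bound on the increasing length comes from stacking below a corner of line $i$ one corner from each of the lines $1,\dots,i-1$; the matching upper bound, which forbids corners of distinct shadow lines from merging onto a single second-generation line, is exactly where the rigidity of Lemma~\ref{L:labels} is indispensable. Indeed, without the avoidance hypothesis this merging does happen: for $\pi=4132$ the lone corner of line $1$ and the lone corner of line $2$ are incomparable and land on the same second-generation line, which is precisely the mechanism by which ``column $=$ shadow line'' fails in general. Establishing this non-merging from the unique-label structure of Lemma~\ref{L:labels} is the crux; the base cases $k\le 2$ are immediate.
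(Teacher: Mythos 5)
Your scaffolding is correct and, in substance, it is the paper's: both arguments run on Viennot's iterated shadow-line construction, use Lemma~\ref{L:labels} for the per-line point counts and labels and Lemma~\ref{L:staircase} for the shape, and reduce the lemma to the statement that shadow lines pass intact to the corner configuration. Your bookkeeping is also right: the corner between the $t$th and $(t{+}1)$st points of line $i$ does sit at the height of the $t$th point, the corner set does have $\binom{k-1}{2}$ points, and $\pi=4132$ is a genuine example of merging when the avoidance hypothesis is dropped. Your induction-on-$k$ packaging (apply the converse direction of Lemma~\ref{L:labels} to the corner permutation $\pi'$, then invoke the inductive hypothesis) is in fact a clean way to propagate the argument to all rows, which the paper handles by asserting its one-step claim at every iteration.

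The gap is that you stop exactly where the proof has to start. The claim that corners coming from distinct shadow lines cannot merge --- equivalently, that the corner between the $t$th and $(t{+}1)$st points of line $i$ acquires label exactly $(i,t)$ --- is announced as ``the crux'' and never argued, and that claim is essentially the entire content of the paper's proof. The paper proves it by contradiction: if two adjacent points $\pi_a>\pi_b$ of one shadow line were to land on different lines at the next iteration, then one of the two configurations of Figure~\ref{F:shadow3} occurs (a corner playing the $1$ of a $312$ with their corners, or a corner playing the $2$ of a $231$), and in each case one exhibits a decreasing subsequence of $\pi$ ending at a specified point that is strictly longer than that point's label permits, contradicting Lemma~\ref{L:labels}. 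Nothing in your outline substitutes for this case analysis. Note moreover that even your ``easy'' lower bound is not free: it is not a consequence of the shadow-line construction alone that a corner of line $i$ has a corner of each line $j<i$ to its lower left (for $\pi=14325$, line $1$ has no corners at all while line $2$ has two, so its corners fall on the \emph{first} line of the corner configuration), so both bounds on the corner label rest on the rigidity argument you have deferred. As written, the proposal is a correct reduction of the lemma to a statement of the same difficulty, not a proof of it.
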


\begin{proof}

By Lemma \ref{L:labels} we know that the $j$th point on the $i$th shadow line of $\pi$ is the unique point such that the longest increasing subsequence ending in that point has length $i$ and the longest decreasing subsequence ending in that point has length $j$ and that the $i$th shadow line of $\pi$ has $k-i$ points for all $1 \leq i \leq k-1$.

We claim that if two points are adjacent points on the same shadow line at one iteration of the shadow line construction, then they must be on the same shadow line at the next iteration.  Suppose to the contrary that $\pi_a>\pi_b$ are adjacent points on the same shadow line in one iteration, but they are on different shadow lines in the next iteration.   Then one of the two cases in Figure \ref{F:shadow3} must occur.  That is, when we consider the square points at height $\pi_a$ and $\pi_b$, either there is a square on a different shadow line that forms the $1$ in a $312$ pattern (in which case $\pi_a$ moves to an earlier shadow line than $\pi_b$), or there is a square on a different shadow line that forms the $2$ in a $231$ pattern (in which case $\pi_b$ moves to an earlier shadow line than $\pi_a$), but not both (in which case, $\pi_a$ and $\pi_b$ would stay on the same line). In either case, since both $\pi_a$ and $\pi_b$ are involved in the next iteration, there must be at least one more point $\pi_c$ further to the right on their shadow line. 

In the case where there is a square on a different shadow line that forms the $1$ in a 312 pattern (on the left of Figure \ref{F:shadow3}), suppose that $\pi_d$ is the height of the square that plays the role of this 1.  Call the next point on $\pi_d$'s shadow line $\pi_e$, Since $\pi_d$'s square forms a 312 pattern with $\pi_a$ and $\pi_b$'s squares, $\pi_e$ appears horizontally between $\pi_b$ and $\pi_c$.  If $\pi_d$ is the first point on the shadow line, we have a contradiction since the longest decreasing subsequence ending at $\pi_e$ should have length 2.  However, $\pi_a$, $\pi_b$, and $\pi_e$ form a longer decreasing subsequence ending in $\pi_e$.  If $\pi_d$ is not the first point on its shadow line, then call the previous point $\pi_f$.  Since there is no 231 pattern using the squares from $\pi_a$ and $\pi_b$, $\pi_f>\pi_a$.  The longest decreasing subsequence ending in $\pi_e$ should be the one that follows the shadow line containing $\pi_e$, ending in $\pi_f \pi_d \pi_e$, however, taking the same decreasing subsequence and replacing $\pi_d \pi_e$ with $\pi_a\pi_b\pi_e$ forms an even longer decreasing subsequence, which is a contradiction.  So, this case is impossible if $\pi \pi^r$ avoids $12\cdots k$.  

The case where there is a square on a different shadow line that forms the 2 in a 231 pattern (on the right of Figure \ref{F:shadow3}) is similar.  Again, suppose $\pi_d$ is the height of the square that plays the role of this 2 and call the next point on $\pi_d$'s shadow line $\pi_e$.  Since $\pi_d$'s square forms a 231 pattern with $\pi_a$ and $\pi_b$'s squares, $\pi_e$ appears before $\pi_b$.  If $\pi_e$ is the last element on its shadow line we have a contradiction because $\pi_e$'s shadow line should have one more element than $\pi_a$'s shadow line, and the longest decreasing subsequence ending in $\pi_e$ (following its shadow line) should be longer than the longest decreasing subsequence ending in $\pi_c$.  However, taking $\pi_e$'s shadow line and replacing $\pi_d \pi_e$ with $\pi_d \pi_b \pi_c$ produces a longer decreasing subsequence ending in $\pi_c$.  If $\pi_e$ is not the last element on its shadow line, then call the next point on the shadow line $\pi_f$.  Since there is no 312 pattern using the squares from $\pi_a$ and $\pi_b$, we know that $\pi_f$ appears to the right of $\pi_c$.  Again, the longest decreasing subsequence ending in $\pi_f$ should be the decreasing subsequence formed by following $\pi_f$'s shadow line.  However, following this shadow line and replacing $\pi_d \pi_e \pi_f$ with $\pi_d \pi_b\pi_c\pi_f$ forms a longer decreasing subsequence ending in $\pi_f$, which is a contradiction.  So, this case is also impossible if $\pi \pi^r$ avoids $12\cdots k$.  

In summary, if $\pi \pi^r$ avoids $12\cdots k$, then two adjacent elements on the same shadow line in one iteration of the shadow lines construction will be adjacent elements on the same shadow line at the next iteration.  This means that each row of $P(\pi)$ takes one element from each of the original shadow lines and the $k-i$ elements of the $i$th shadow line appear in the $i$th column of $P(\pi)$.
\end{proof}

\begin{figure}

\begin{center}
\begin{tabular}{cc}
\scalebox{0.8}{
\begin{tikzpicture}
\draw[step=1cm,gray,very thin] (0,0) grid (7,7);
    \draw[line width=0.1cm] (3,7)--(3,5)--(4,5)--(4,4)--(6,4)--(6,3)--(7,3);
		    \draw[line width=0.1cm] (2,6)--(2,2)--(5,2)--(5,1)--(7,1);
				\draw[dashed,line width=0.1cm] (1,7)--(1,6)--(2,6);
					\fill[black] (3,5) circle (.1cm) node[align=left,   below] {$\pi_a$};
					\fill[black] (4,4) circle (.1cm) node[align=left,   below] {$\pi_b$};
					\fill[black] (6,3) circle (.1cm) node[align=left,   below] {$\pi_c$};
					\fill[black] (2,2) circle (.1cm) node[align=left,   below] {$\pi_d$};
					\fill[black] (5,1) circle (.1cm) node[align=left,   below] {$\pi_e$};
					\fill[black] (1,6) circle (.1cm) node[align=left,   below] {$\pi_f$};
					\draw [fill] (3.8,4.8) rectangle (4.2,5.2);
					\draw [fill] (5.8,3.8) rectangle (6.2,4.2);					
					\draw [fill] (4.8,1.8) rectangle (5.2,2.2);
\end{tikzpicture}}&
\scalebox{0.8}{\begin{tikzpicture}
\draw[step=1cm,gray,very thin] (0,0) grid (7,7);
    \draw[line width=0.1cm] (3,7)--(3,6)--(4,6)--(4,4)--(5,4)--(5,3)--(7,3);
		    \draw[line width=0.1cm] (1,7)--(1,5)--(2,5)--(2,2)--(6,2);
				\draw[dashed,line width=0.1cm] (6,2)--(6,1)--(7,1);
					\fill[black] (3,6) circle (.1cm) node[align=left,   below] {$\pi_a$};
					\fill[black] (4,4) circle (.1cm) node[align=left,   below] {$\pi_b$};
					\fill[black] (5,3) circle (.1cm) node[align=left,   below] {$\pi_c$};
					\fill[black] (1,5) circle (.1cm) node[align=left,   below] {$\pi_d$};
					\fill[black] (2,2) circle (.1cm) node[align=left,   below] {$\pi_e$};
					\fill[black] (6,1) circle (.1cm) node[align=left,   below] {$\pi_f$};
					\draw [fill] (3.8,5.8) rectangle (4.2,6.2);
					\draw [fill] (4.8,3.8) rectangle (5.2,4.2);					
					\draw [fill] (1.8,4.8) rectangle (2.2,5.2);
\end{tikzpicture}}
\end{tabular}
\end{center}
\caption{Cases for points on the same shadow line}
\label{F:shadow3}
\end{figure}

Note that the converse of Lemma \ref{L:shadow} is false.  For example, when $\pi = 645123$, then $P(\pi)$ has shape (3,2,1) where the columns of $P(\pi)$ correspond to the original shadow lines of $\pi$.  However, $\pi \pi^r$ contains 1234 using the digits 1, 2, and 3 from $\pi$ along with the digit 4 from $\pi^r$.

While Lemma \ref{L:labels} completely characterizes $\pi$ for which $\pi \pi^r \in \mathcal{R}_{\binom{k}{2}}(12\cdots k)$ by giving conditions on shadow lines, we have only given partial conditions on the correponding tableau $P(\pi)$.  By Lemma \ref{L:staircase}, we know that it is necessary for $P(\pi)$ to have shape $(k-1, k-2,\dots, 1)$ and by Lemma \ref{L:shadow} is it necessary that the $i$th column of $P(\pi)$ consist of the entries of the $i$th shadow line of $\pi$.  As we have seen, neither of these conditions is sufficient for $\pi \pi^r \in \mathcal{R}_{\binom{k}{2}}(12\cdots k)$.  The missing condition is that given in Theorem \ref{T:syt}; i.e. the diagonals of $P(\pi)$ must be increasing.

\begin{proof}[Proof of Theorem \ref{T:syt}]

By Lemma \ref{L:staircase} and Lemma \ref{L:shadow} we restrict our attention to $\pi \in \mathcal{S}^*_{\binom{k}{2}}$ such that the $i$th column of $P(\pi)$ consists of the entries of the $i$th shadow line of $\pi$. We show that the diagonals of $P(\pi)$ are increasing if and only if $\pi \pi^r$ avoids $12 \cdots k$.  

Suppose that $d$ is the smallest integer for which there is a decrease in a diagonal between some element $x$ in column $d$ and element $y$ in column $d+1$.  Since $x>y$ and $x$ is on an earlier shadow line than $y$, we know that $x$ appears to the left of $y$.  The entry $z$ in row 1 column $d+1$ corresponds to the last element on the $(d+1)$st shadow line, which means the longest decreasing subsequence in $\pi$ ending in $z$ should have length $k-d-1$.  However, the elements $x$ through the end of column $d$ together with elements $y$ through $z$ of column $d+1$ form a decreasing subsequence of length $k-d$ in $\pi$ that ends in $z$, which contradicts $\pi \pi^r$ avoiding $12\cdots k$.

Suppose on the other hand that all diagonals in $P(\pi)$ are increasing.  As in the proof of Theorem \ref{T:erdos}, given (partial) permutation $\pi^*$, let $a_k$ be the length of the longest increasing subsequence of $\pi^*$ ending in $\pi^*_k$ and let $b_k$ be the length of the longest decreasing subsequence of $\pi^*$ ending in $\pi^*_k$.  The (partial) permutation under consideration will be made clear from context.  We claim that for any element $\pi_k$, if $\pi_k$ is the $j$th element of the $i$th shadow line of $\pi$ then $(a_k,b_k)=(i, j)$.  Since $\pi_k$ is on the $i$th shadow line, $a_k=i$ automatically, and we need only check that the maximal decreasing sequence ending in this entry is of length $j$.

Let $\pi^{(f)}$ be the partial permutation formed by the first $f$ shadow lines of $\pi$.  We claim that that $\pi^{(f)}$ uses labels $(a,b)$ where $1 \leq a \leq f$ and $2 \leq a+b \leq k$ each exactly once and proceed by induction on $f$.  Notice that $\pi^{(1)}$ consists of only the first shadow line.  Its $j$th digit is at the end of a maximal increasing sequence of length 1 and a maximal decreasing sequence of length $j$, and $\pi^{(1)}$ has length $k-1$, so it satisfies our claim.

Now suppose that the partial permutation $\pi^{(f-1)}$ uses labels $(a,b)$ where $1 \leq a \leq f-1$ and $2 \leq a+b \leq k$ each exactly once.  We will show that $\pi^{(f)}$ uses labels $(a,b)$ where $1 \leq a \leq f$ and $2 \leq a+b \leq k$ each exactly once.

Since the increasing diagonals property implies that the $j$th point on shadow line $f$ is larger than the $j$th point on any previous shadow line, that point cannot be at the end of a longer decreasing sequence in $\pi^{(f)}$ than the one it already ends just by following shadow line $f$.  

However, can including the digits from shadow line $f$ affect the longest decreasing subsequence ending at some point in $\pi^{(f-1)}$?  We claim not.  Suppose, to the contrary that there is some point $\pi_k$ in $\pi^{(f)}$ that has the expected label $(a_k,b_k)=(i,j)$ as the $j$th element of the $i$th shadow line when we restrict to $\pi^{(f-1)}$, but its second coordinate is larger when we restrict to $\pi^{(f)}$.  

This implies that there is some $j^*$ for which the $j^*$th element of shadow line $f$ appears before the $j^*$th element of an earlier shadow line $e$ in $\pi^{(f)}$ in order to make a longer decreasing subsequence ending in $\pi_k$.  However, we assumed that column $i$ of $P(\pi)$ corresponds to shadow line $i$ of $\pi$ for all $i$, which means each entry stays in the same column of $P(\pi)$ as it was originally inserted throughout the Robinson-Schensted bumping algorithm.  When there are $j^*$ elements of shadow line $f$ before the $j^*$th element of shadow line $e$, one of the elements from column $f$ will necessarily be bumped to an earlier column during the Robinson-Schensted bumping algorithm.  So this is impossible.  That is, given that $\pi^{(f-1)}$ uses labels $(a,b)$ where $1 \leq a \leq f-1$ and $2 \leq a+b \leq k$ each exactly once, if $P(\pi)$ satisfies the increasing diagonals property then $\pi^{(f)}$ uses labels $(a,b)$ where $1 \leq a \leq f$ and $2 \leq a+b \leq k$ each exactly once.  Since this is true for all $1 \leq f \leq k-1$, when we consider the labels of $\pi = \pi^{(k-1)}$ we see that $\pi$ avoids $12 \cdots k$ by Lemma \ref{L:labels}.

\end{proof}

We are now in a position to compute $\mathrm{r}_{\binom{k}{2}}(12\cdots k)$ since $\mathcal{R}_{\binom{k}{2}}(12\cdots k)$ is in bijection with pairs of standard Young tableau $(P,Q)$ where $P$ and $Q$ both have shape $(k-1, k-2, \dots, 1)$ and $P$ has increasing diagonals.

\begin{corollary}
$$\mathrm{r}_{\binom{k}{2}}(12\cdots k) = \left(\left(\dfrac{1}{2}(k-1)^2+\dfrac{k}{2}-\dfrac{1}{2}\right)!\prod_{i=1}^{k-1}\dfrac{(i-1)!}{(2i-1)!}\right)\left(\dfrac{\binom{k}{2}!}{\prod_{i=1}^{k-1}\left(2i-1\right)^{k-i}}\right).$$
\label{C:rsk}
\end{corollary}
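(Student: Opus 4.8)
The plan is to combine Theorem~\ref{T:syt} with the Robinson--Schensted bijection and then evaluate two enumerations separately. By Theorem~\ref{T:syt}, $\mathrm{r}_{\binom{k}{2}}(12\cdots k)$ counts pairs $(P,Q)$ of standard Young tableaux of the staircase shape $(k-1,k-2,\dots,1)$ in which $P$ has increasing diagonals and $Q$ is arbitrary. Since no condition links $P$ to $Q$, this count factors as $N_P\cdot N_Q$, where $N_P$ is the number of increasing-diagonal tableaux of shape $(k-1,\dots,1)$ and $N_Q$ is the number of standard Young tableaux of that same shape. I would identify the second parenthesized factor of the corollary with $N_Q$ and the first with $N_P$; note also that $\tfrac12(k-1)^2+\tfrac k2-\tfrac12=\binom k2$, so the leading factorial is simply $\binom k2!$.

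For $N_Q$ I would apply the ordinary hook length formula. In the staircase shape the cell in row $r$, column $c$ has equal arm and leg $k-r-c$, hence hook length $2(k-r-c)+1$; counting how often each value occurs shows that the odd number $2i-1$ arises exactly $k-i$ times for $1\le i\le k-1$. Therefore $N_Q=\binom k2!\big/\prod_{i=1}^{k-1}(2i-1)^{k-i}$, which is precisely the second factor.

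The heart of the argument is evaluating $N_P$, for which I would pass to shifted tableaux. First, since the diagonal relation $\text{entry}(r,c)<\text{entry}(r-1,c+1)$ is equivalent to $\text{entry}(r+1,c)<\text{entry}(r,c+1)$, the chain $(r,c)<(r+1,c)<(r,c+1)$ shows that the row inequalities of an increasing-diagonal tableau are forced by its column and diagonal inequalities. I would then introduce the coordinate change $\phi(r,c)=(c,\,r+c-1)$, a bijection from the cells of $(k-1,\dots,1)$ to the cells $\{(i,j):1\le i\le j\le k-1\}$ of the shifted staircase. Under $\phi$ the column relation $(r,c)<(r+1,c)$ becomes a shifted-row adjacency and the diagonal relation $(r+1,c)<(r,c+1)$ becomes a shifted-column adjacency, so $\phi$ identifies increasing-diagonal tableaux of shape $(k-1,\dots,1)$ with standard shifted tableaux of the shifted staircase $\lambda=(k-1,k-2,\dots,1)$. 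Hence $N_P=g^\lambda$, and I would apply the formula for the number of standard shifted tableaux, $g^\lambda=\frac{n!}{\prod_i\lambda_i!}\prod_{i<j}\frac{\lambda_i-\lambda_j}{\lambda_i+\lambda_j}$, with $\lambda_i=k-i$ and $n=\binom k2$.

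The remaining work is elementary product manipulation. Using the superfactorial evaluation $\prod_{1\le i<j\le k-1}(j-i)=\prod_{m=1}^{k-2}m!$ together with the identity $\prod_{1\le i<j\le k-1}(i+j)=\prod_{i=1}^{k-1}\frac{(2i-1)!}{i!}$ — which follows by induction, since both sides gain a factor of $\frac{(2n-1)!}{n!}$ when the upper index advances from $n-1$ to $n$ — one reduces $g^\lambda$ to $\binom k2!\prod_{i=1}^{k-1}\frac{(i-1)!}{(2i-1)!}$, the first factor. Multiplying $N_P$ by $N_Q$ then yields the stated formula. I expect the main obstacle to lie in the third paragraph: verifying that $\phi$ genuinely converts the increasing-diagonal condition into the standard shifted condition, and recognizing that the resulting count is governed by the shifted hook length formula. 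The product identities in the last paragraph, by contrast, are routine.
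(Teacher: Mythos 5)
Your proposal is correct, and its top-level structure is the same as the paper's: by Theorem~\ref{T:syt} the quantity factors as (number of increasing-diagonal staircase tableaux) $\times$ (number of all staircase SYT), since the two conditions on $P$ and $Q$ are independent. The difference is in how the two factors are evaluated. The paper's proof of Corollary~\ref{C:rsk} is essentially a proof by citation: it identifies the first factor with OEIS A003121 and the second with the hook length formula count (OEIS A005118), without deriving either. You derive both. Your hook-length computation for the second factor (arm $=$ leg $= k-r-c$, so the hook value $2i-1$ occurs exactly $k-i$ times) is exactly the calculation hiding behind the paper's second citation, and it is correct. The genuinely new content is your treatment of the first factor: the shearing bijection $\phi(r,c)=(c,\,r+c-1)$ carries column relations of the staircase to row relations of the shifted staircase and diagonal relations to column relations, while the original row relations come for free from the chain $(r,c)<(r+1,c)<(r,c+1)$ --- a step that is legitimate precisely because in a staircase shape the cell $(r+1,c)$ exists whenever $(r,c+1)$ does, which you should state explicitly. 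This identifies increasing-diagonal staircase tableaux with standard shifted tableaux of the shifted staircase, and Schur's formula $g^\lambda=\frac{n!}{\prod_i\lambda_i!}\prod_{i<j}\frac{\lambda_i-\lambda_j}{\lambda_i+\lambda_j}$ together with your two product identities (both of which check out, e.g. $\prod_{1\le i<j\le k-1}(i+j)=\prod_{i=1}^{k-1}\frac{(2i-1)!}{i!}$ by the induction you describe, and $\prod_{i=1}^{k-1}\frac{i!}{(2i-1)!}=(k-1)!\prod_{i=1}^{k-1}\frac{(i-1)!}{(2i-1)!}$ absorbing the leftover $\frac{1}{(k-1)!}$) reduces $g^\lambda$ to the paper's first factor. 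What your route buys is a self-contained proof that explains \emph{why} the A003121-type formula has the form it does; what the paper's route buys is brevity. The only external ingredient you still rely on is Schur's shifted-tableau formula, so a reference for it (Schur, or Thrall's shifted hook length formula) is the one thing to add.
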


\begin{proof}
Here, the first factor of $$\left(\left(\dfrac{1}{2}(k-1)^2+\dfrac{k}{2}-\dfrac{1}{2}\right)!\prod_{i=1}^{k-1}\dfrac{(i-1)!}{(2i-1)!}\right)$$ is OEIS sequence A003121 in \cite{OEIS}, which is the number of standard Young tableaux of shape $(k-1, \dots, 1)$ with increasing diagonals, while $$\left(\dfrac{\binom{k}{2}!}{\prod_{i=1}^{k-1}\left(2i-1\right)^{k-i}}\right)$$ is the total number of standard Young tableaux of shape $(k-1, \dots, 1)$, as computed by the hook length formula and described in OEIS sequence A005118.
\end{proof}

\section{Avoiding patterns of small length}\label{S:small}

We have already described reverse double lists avoiding a monotone pattern of arbitrary length.  Although $\mathrm{r}_n(12\cdots k)=0$ for sufficiently large $n$, there are other patterns $\rho$ for which $\mathrm{r}_n(\rho)$ exhibits other behavior.  In the rest of the paper, we consider reverse double lists avoiding a variety of non-monotone patterns. 

\subsection{Avoiding patterns of length 3}\label{S:three}

In this subsection, we consider patterns of length 3.  First, notice that the graph of a reverse double list $\sigma \in \mathcal{R}_n$ is a set of points on the rectangle $[0,2n+1] \times [0,n+1]$.  Using the reverse and complement involutions described in Section \ref{S:Intro}, $$\sigma \in \mathcal{R}_n(\rho) \Longleftrightarrow \sigma^r \in \mathcal{R}_n(\rho^r) \Longleftrightarrow \sigma^c \in \mathcal{R}_n(\rho^c).$$  In fact, if $\sigma \in \mathcal{R}_n(\rho)$, then $\sigma=\sigma^r$, so $\mathcal{R}_n(\rho)=\mathcal{R}_n(\rho^r)$.  Partition the set of permutation patterns of length $k$ into equivalence classes where $\rho \sim \tau$ means that $\mathrm{r}_n(\rho) = \mathrm{r}_n(\tau)$ for $n \geq 1$.  When $\rho \sim \tau$, $\rho$ and $\tau$ are said to be \emph{Wilf equivalent}.  When this equivalence holds because of one of the symmetries of the rectangle, we say that $\rho$ and $\tau$ are \emph{trivially Wilf equivalent}.  Using trivial Wilf equivalence we have that $123 \sim 321$ and $132 \sim 213 \sim 231 \sim 312$, so we need only consider 2 patterns in this section: 123 and 132.

With pattern-avoiding permutations, avoiding a pattern of length 3 is the first non-trivial enumeration, and for any pattern $\rho$ of length 3, we have that $\mathrm{s}_n(\rho)$ is the $n$th Catalan number.  Reverse double lists are more restrictive, so we obtain simpler sequences for $\mathrm{r}_n(\rho)$.  More strikingly, although $\mathrm{s}_n(123)=\mathrm{s}_n(132)$ for $n \geq 1$, we obtain two distinct sequences in this new context.

By Theorem \ref{T:erdos}, $\mathrm{r}_n(123)=0$ for $n \geq 4$.  On the other hand, there are 132-avoiders of arbitrary length.

\begin{prop}
$\mathrm{r}_n(132)=\mathrm{r}_n(213)=\mathrm{r}_n(231)=\mathrm{r}_n(312)=2$ for $n \geq 2$.
\label{T:132}
\end{prop}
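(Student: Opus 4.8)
The plan is to invoke the trivial Wilf equivalences $132 \sim 213 \sim 231 \sim 312$ recorded just above the statement, so that it suffices to prove $\mathrm{r}_n(132) = 2$ for $n \ge 2$, which I will do by induction on $n$. For the base case $n = 2$, every word in $\mathcal{R}_2$ uses only the two values $1, 2$ and so cannot contain a pattern with three distinct values; hence both $1221$ and $2112$ avoid $132$ and $\mathrm{r}_2(132) = 2$.

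The heart of the argument is a structural lemma: \emph{for $n \ge 3$, if $\sigma = \pi\pi^r$ avoids $132$ then $\pi_1 = n$}. Suppose instead the first copy of the maximal value $n$ sits at position $m = \pi^{-1}(n) \ge 2$, and let $a = \min\{\pi_1, \dots, \pi_{m-1}\}$ be the smallest entry preceding it. The key observation is that in $\sigma$ every value $v$ occurs a second time, at its mirrored position $2n+1 - \pi^{-1}(v) \ge n+1$, hence strictly after position $m$ (as $m \le n$). Consequently, if some value $c$ satisfies $a < c < n$, then $a$ (in the first half, at a position $< m$), the entry $n$ at position $m$, and the mirrored copy of $c$ (at a position $> n \ge m$) form a $132$ pattern. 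Such a $c$ exists unless $a = n-1$, which (since the entries before position $m$ are distinct and none equals $n$) forces $m = 2$, $\pi_1 = n-1$, $\pi_2 = n$. In that remaining configuration I instead use the second copy of $n$, at position $2n-1$, together with the entry $n-1 = \pi_1$ at the final position $2n$ and any copy of the value $1$ in the first half (which lies at a position $\le n < 2n-1$, since $\pi_1, \pi_2 \ne 1$): these yield the $132$ pattern $(1, n, n-1)$, valid because $n \ge 3$. Either way $\sigma$ contains $132$, a contradiction, so $\pi_1 = n$.

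With $\pi_1 = n$ established, I set up the induction step. The two copies of $n$ in $\sigma = \pi_1 \cdots \pi_n\, \pi_n \cdots \pi_1$ now occur exactly at the extreme positions $1$ and $2n$. Since $n$ is the largest value and a $132$ pattern uses three distinct values, any occurrence involving a copy of $n$ must place it in the role of the ``$3$'', i.e.\ at the middle of the three chosen positions, which requires an entry both before and after it; neither extreme position admits this. Hence $\sigma$ avoids $132$ if and only if its central factor $\sigma_2 \cdots \sigma_{2n-1}$ does, and this factor is exactly $\tau\tau^r$, where $\tau \in \mathcal{S}_{n-1}$ is obtained from $\pi_2 \cdots \pi_n$ by relabeling (pattern containment is unaffected by relabeling). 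Prepending $n$ inverts this operation, so $\pi \mapsto \tau$ is a bijection between $\{\pi \in \mathcal{S}_n : \pi\pi^r \text{ avoids } 132\}$ and $\{\tau \in \mathcal{S}_{n-1} : \tau\tau^r \text{ avoids } 132\}$. Because the lemma guarantees that every avoider $\pi$ begins with $n$, this gives $\mathrm{r}_n(132) = \mathrm{r}_{n-1}(132)$ for $n \ge 3$; with the base case, induction yields $\mathrm{r}_n(132) = 2$ for all $n \ge 2$, and the trivial Wilf equivalences propagate the equality to $213$, $231$, and $312$.

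I expect the structural lemma forcing $\pi_1 = n$ to be the main obstacle. The generic part is routine once one notes that every value reappears in the second half, but the degenerate configuration $\pi_1 = n-1,\ \pi_2 = n$ escapes the first construction and must be handled separately via the mirrored copy of $n$. (As a consistency check on the statement, the two surviving permutations should be the decreasing permutation $n(n-1)\cdots 1$ and $n(n-1)\cdots 3\,1\,2$, which for $n = 3$ are $321, 312$ and for $n = 4$ are $4321, 4312$.)
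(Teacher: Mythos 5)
Your proposal is correct and takes essentially the same approach as the paper: both arguments establish the key structural fact that any $132$-avoider $\pi\pi^r$ with $n \ge 3$ must have $\pi_1 = n$ by exhibiting explicit $132$ patterns in all other cases, then strip the two copies of $n$ (which, sitting at the extreme positions, can never serve as the ``3'' of an occurrence) to obtain $\mathrm{r}_n(132) = \mathrm{r}_{n-1}(132)$ and induct from $\mathcal{R}_2(132) = \{1221, 2112\}$. The only difference is organizational: the paper's case analysis is on the value of $\pi_1$ (the cases $\pi_1 = 1$ and $2 \le \pi_1 \le n-1$), whereas yours is on the position of $n$ in $\pi$ and the minimum entry preceding it, with the configuration $\pi_1 = n-1,\ \pi_2 = n$ handled separately---but the witnesses produced are of the same kind, so this is a detail-level variation rather than a different route.
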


\begin{proof}
The $n=2$ case is straightforward to check.

Now, suppose $n \geq 3$.  We claim $$\mathcal{R}_n(132)=\left\{n(n-1)\cdots 312213\cdots (n-1)n, n(n-1)\cdots 321123\cdots (n-1)n\right\}.$$

Assume for contradiction that $\sigma=\pi\pi^r \in \mathcal{R}_n(132)$ and $\pi_1\neq n$. This implies that either $\pi_1=1$ or $2\leq\pi_1\leq n-1$.  If $\pi_1=1$, then the digits 1 and 3 in $\pi$ and $2$ in $\pi^r$ form a 132 pattern.  If $2\leq\pi_1\leq n-1$, then the digit 1 in $\pi$ together with $n$ and $\pi_1$ in $\pi^r$ form a 132 pattern. Thus, $\pi_1=n$.  Further, if $\sigma \in \mathcal{R}_n(132)$, then let $\sigma^{\prime}$ be the reverse double list formed by deleting both copies of $n$.  It must be the case that $\sigma^{\prime} \in \mathcal{R}_{n-1}(132)$ since a copy of 132 in $\sigma^{\prime}$ would also be a copy of 132 in $\sigma$.  Thus, $\mathcal{R}_n(132)=\left\{n\sigma^{\prime}n \middle| \sigma^{\prime} \in \mathcal{R}_{n-1}(132)\right\}$.  Since $\mathcal{R}_2(132)=\{1221,2112\}$, the claim follows by induction, and so $|\mathcal{R}_n(132)|=2$ when $n\geq 2$.  
\end{proof}

At this point, we have completely characterized reverse double lists avoiding a single pattern of length 3.  Although we obtained only trivial sequences, the fact that we obtained two distinct Wilf classes when avoiding a pattern of length 3 mirrors results for pattern-avoiding double lists in \cite{VERUM2014}.

\subsection{Avoiding patterns of length 4}\label{S:four}

Next, we analyze reverse double lists avoiding a single pattern of length 4.  Using the symmetries of the rectangle, we can partition the 24 patterns of length 4 into 7 trivial Wilf classes, as shown in Table \ref{T:length4}.  There is one non-trivial Wilf equivalence for patterns of length 4; namely $1324 \sim 2143$.  To contrast: for double lists, there are no non-trivial Wilf equivalences for patterns of length 4.  For permutations, we have an additional trivial Wilf equivalence since $\mathrm{s}_n(\rho) = \mathrm{s}_n(\rho^{-1})$ for $n \geq 1$, so $\mathrm{s}_n(1342)=\mathrm{s}_n(1423)$.  There are a number of additional non-trivial Wilf equivalences for pattern-avoiding permutations so that every length 4 pattern is equivalent to one of 1342, 1234, or 1324.  For large $n$, we have that $$\mathrm{s}_n(1342^\bullet) < \mathrm{s}_n(1234^\dagger) < \mathrm{s}_n(1324^\circ).$$  In Table \ref{T:length4} each pattern is marked according to its Wilf equivalence class for permutations; patterns equivalent to 1342 are marked with $\bullet$, those equivalent to 1234 are marked with $\dagger$, and those equivalent to 1324 are marked with $\circ$.  For permutations, the monotone pattern 1234 is neither the hardest nor the easiest pattern to avoid; for double lists, it is the easiest pattern to avoid, and for reverse double lists it is the hardest pattern to avoid.  Other than the trivial equivalences of reverse and complement, Wilf equivalence in the context of reverse double lists appears to be a very different phenomenon than equivalence in the contexts of permutations or double lists.  We now consider each of these patterns in turn.

\begin{table}[hbt]
\begin{center}
\begin{tabular}{|l|l|}
\hline
Pattern $\rho$& $\left\{\mathrm{r}_n(\rho)\right\}_{1 \leq n \leq 9}$\\
\hline
$1234^\dagger \sim 4321^\dagger$ & 1, 2, 6, 16, 32, 32, 0, 0, 0 \\
\hline
$1243^\dagger  \sim 2134^\dagger  \sim 3421^\dagger  \sim 4312^\dagger$ & 1, 2, 6, 16, 34, 62, 102, 156, 226\\
\hline
$1324^\circ \sim 4231^\circ$&1, 2, 6, 16, 36, 76, 156, 316, 636\\
\hline
$2143^\dagger \sim 3412^\dagger$&1, 2, 6, 16, 36, 76, 156, 316, 636 \\
\hline
$1423^\bullet \sim 2314^\bullet \sim 3241^\bullet \sim 4132^\bullet$ & 1, 2, 6, 16, 36, 80, 178, 394, 870 \\
\hline
$1432^\dagger  \sim 2341^\dagger  \sim 3214^\dagger  \sim 4123^\dagger $ &1, 2, 6, 16, 38, 92, 222, 536, 1294\\
\hline
$1342^\bullet \sim 2431^\bullet \sim 3124^\bullet \sim 4213^\bullet$ & 1, 2, 6, 16, 40, 98, 238, 576, 1392\\
\hline
$2413^\bullet \sim 3142^\bullet$&1, 2, 6, 16, 44, 120, 328, 896, 2448\\
\hline
\end{tabular}
\end{center}
\caption{Enumeration of reverse double lists avoiding a pattern of length 4}
\label{T:length4}
\end{table}

\subsubsection{The pattern 1234}

By Theorem \ref{T:erdos}, $\mathrm{r}_n(1234)=0$ for $n \geq 7$.

\subsubsection{The pattern 1243}

\begin{theorem}
$\mathrm{r}_n(1243)=\frac{n^3}{3}-\frac{7n}{3}+4$ for $n \geq 2$.
\label{T:1243}
\end{theorem}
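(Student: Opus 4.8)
The plan is to pass through the bijection $\pi \mapsto \pi\pi^r$ between $\mathcal{S}_n$ and $\mathcal{R}_n$, characterize exactly which $\pi \in \mathcal{S}_n$ have the property that $\pi\pi^r$ avoids $1243$, and then count them. Since $\sigma = \pi\pi^r$ is a palindrome, $\sigma$ avoids $1243$ if and only if it avoids $1243^r = 3421$, which I would use to trim the casework. The key observation is that an occurrence of $1243$ in $\sigma$ uses four positions, and because every value of $[n]$ occurs once in each half, such an occurrence is classified by how many of its four positions lie in the first copy $\pi$ versus the reversed copy $\pi^r$. Writing the four values in position order as $w,x,z,y$ (the $1243$ shape, where $w<x<y<z$), I would split into the five types $(4,0),(3,1),(2,2),(1,3),(0,4)$ according to this half-split and translate each into a condition on $\pi$ alone: the all-first and all-second types say $\pi$ contains $1243$ or $3421$; the $(3,1)$ type says $\pi$ has an increasing triple $w<x<z$ with some value strictly between $x$ and $z$; the $(1,3)$ type says $\pi$ has a $231$ pattern whose smallest entry exceeds $1$; and the $(2,2)$ type says $\pi$ has two ``value-separated'' ascending pairs, i.e.\ values $w<x<y<z$ with $w$ before $x$ and $y$ before $z$ in $\pi$.

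Next I would collapse these five conditions. The extreme types are redundant: a copy of $1243$ in $\pi$ already exhibits the increasing triple $1,2,4$ with the value $3$ strictly between $2$ and $4$, so $(4,0)$ is subsumed by $(3,1)$; and a copy of $3421$ contains a $231$ pattern on its top three entries, whose smallest entry is at least $2$ while a still-smaller entry remains available, so $(0,4)$ is subsumed by $(1,3)$. Hence $\pi\pi^r$ contains $1243$ if and only if at least one of the three middle types occurs, so $\pi\pi^r$ avoids $1243$ if and only if $\pi$ simultaneously avoids the $(2,2)$ configuration, the gapped increasing triple, and the raised $231$. I would then unwind these three conditions into an explicit structural description of the surviving permutations. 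Because $\mathrm{r}_n(1243)$ grows only like $n^3/3$ while $|\mathcal{S}_n| = n!$, the valid $\pi$ must be extremely rigid -- close to fully decreasing, with a tightly controlled set of ascending pairs all concentrated around a single value-threshold -- and I expect to show they are exactly the permutations assembled from a few monotone runs together with one free interleaving parameter.

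Finally I would count the permutations fitting that description, organizing the enumeration by an intrinsic statistic (for example the position of the entry $1$ together with the set of entries to its right, or the threshold separating the decreasing parts), reducing the count to summing a quadratic in that parameter to produce $\frac{n^3}{3} - \frac{7n}{3} + 4$; the base cases $n = 2,3,4$ (giving $2,6,16$) can be checked directly against an inclusion--exclusion count over the three forbidden types. The main obstacle is the reduction step: verifying the subsumptions cleanly and, more importantly, converting the three surviving avoidance conditions into a non-redundant structural description, so that the final enumeration neither double-counts permutations (several choices of threshold can describe the same $\pi$) nor mishandles the degenerate cases where one of the monotone runs is empty.
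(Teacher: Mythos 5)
Your reduction step is correct and is cleanly verified: classifying occurrences of $1243$ in $\sigma=\pi\pi^r$ by how many of the four letters fall in each half, translating each type into a condition on $\pi$ alone, and discarding the two extreme types as subsumed by the $(3,1)$ and $(1,3)$ types. In fact, this is exactly the specialization to $1243$ of the paper's Theorem \ref{T:rdl2perm}, $\mathrm{r}_n(\rho)=\mathrm{s}_n(\rho^{\leftrightarrow})$, which the paper proves in Section \ref{S:five} but deliberately does not use for the length-4 enumerations; your three surviving conditions amount to saying that $\pi$ avoids the eight patterns of $1243^{\leftrightarrow}=\{1234,1243,1324,1342,3124,3142,3412,3421\}$.

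However, there is a genuine gap: everything after the reduction is conjecture rather than proof, and that is where the entire content of the theorem lies. The structural description of the avoiders (``close to fully decreasing \dots a few monotone runs together with one free interleaving parameter'') is asserted but never derived from your three conditions, and the enumeration is only promised (``reducing the count to summing a quadratic''), not carried out; you correctly flag the double-counting issue but do not resolve it. By contrast, the paper's proof spends essentially all of its effort exactly here: it conditions on $a=\pi_1$ and proves rigidity digit by digit -- for instance, $2\le a\le n-2$ forces $\pi_1\cdots\pi_{a-1}=a(a-1)\cdots 2$, after which the digit $1$ may occupy any of $n-a+1$ remaining positions and the other digits admit exactly two arrangements, giving $\sum_{a=2}^{n-2}2(n-a+1)=n^2-n-6$ words in that case; together with two words each for $\pi_1=1$ and $\pi_1=n-1$ and the recursive case $\pi_1=n$, this yields $\mathrm{r}_n(1243)=\mathrm{r}_{n-1}(1243)+(n+1)(n-2)$, which telescopes to the cubic formula. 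Until you prove an analogous structure theorem from your three conditions and then perform a non-redundant count (whether by a recurrence in $n$ or by a direct sum over your chosen statistic), the proposal establishes only the equivalence $\mathrm{r}_n(1243)=\mathrm{s}_n(1243^{\leftrightarrow})$, not the formula in Theorem \ref{T:1243}.
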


\begin{proof}
We claim that $$\mathrm{r}_n(1243)=\begin{cases}
	n! & n\leq 2\\
	\mathrm{r}_{n-1}(1243)+	(n+1)(n-2) & n\geq 3
	\end{cases}.$$

The cases where $n \leq 2$ are easy to check by brute force, so we focus on the case where $n \geq 3$.  Let $\sigma=\pi\pi^r \in \mathcal{R}_n(1243)$, and let $\sigma^{\prime} \in \mathcal{R}_{n-1}(1243)$ be the reverse double list formed by deleting both copies of $n$ in $\sigma$.  We consider 4 cases based on the value of $\pi_1$.

Suppose $\pi_1=n$.  Then $\sigma = n \sigma^{\prime}n$.  Since $n$ can only play the role of a $4$ in a $1243$ pattern, $n\sigma^{\prime}n \in \mathcal{R}_n(1243)$ for any $\sigma^{\prime} \in \mathcal{R}_{n-1}(1243)$.  There are $\mathrm{r}_{n-1}(1243)$ reverse double lists on $n$ letters in this case.

Suppose $\pi_1=n-1$.  We claim that $\pi_2=n-2$.  Assume for contradiction that $\pi_2\neq n-2$.  This implies $\pi_2=n$ or $1 \leq \pi_2 \leq n-3$. If $\pi_2=n$, then the digit 1 in $\pi$ along with the digits $n-2$, $n$, and $n-1$ in $\pi^r$ form a 1243 pattern. If $1 \leq \pi_2 \leq n-3$, then the digits $\pi_2$ and $n-2$ in $\pi$ along with the digits $n$ and $n-1$ in $\pi^r$ form a 1243 pattern. Thus, if $\pi_1=n-1$, $\pi_2=n-2$.  By a similar argument, $\pi_3=n-3$, and in general, $\pi_i=n-i$ for $1 \leq i \leq n-2$.  Finally, $\pi_{n-1}=1$ and $\pi_n=n$ or $\pi_{n-1}=n$ and $\pi_n=1$.  Thus there are exactly two 1243-avoiding reverse double lists on $n$ letters in this case.

Suppose $\pi_1=1$.  We claim that $\pi_2=n$.  Assume for contradiction that $\pi_2\neq n$. This implies $2 \leq \pi_2 \leq n-2$ or $\pi_2=n-1$. If $2 \leq \pi_2 \leq n-2$, then the digits 1, $\pi_2$ and $n$ in $\pi$ along with the digit $n-1$ in $\pi^r$ form a 1243 pattern. If $\pi_2=n-1$, then the digits 1 and 2 in $\pi$ along with the digits $n$ and $n-1$ in $\pi^r$ form a 1243 pattern. Hence, if $\pi_1=1$, then $\pi_2=n$.  By a similar argument, $\pi_3=n-1$, and in general, $\pi_i=n+2-i$ for $2 \leq i \leq n-2$.  Finally, either $\pi_{n-1}=2$ and $\pi_n=3$ or $\pi_{n-1}=3$ and $\pi_n=2$.  Thus there are exactly two 1243-avoiding reverse double lists on $n$ letters in this case.

Finally, suppose $2 \leq \pi_1 \leq n-2$.  Let $\pi_1=a$.  First, we claim that $\pi_1\cdots \pi_{a-1}=a(a-1)\cdots 2$.  If $\pi_1=2$, this claim is already true, so suppose $\pi_1>2$ but $\pi_2 \neq a-1$.  If $\pi_2<a-1$ then the digits $\pi_2$ and $a-1$ in $\pi$ along with the digits $n$ and $a$ in $\pi^r$ form a 1243 pattern.  If $\pi_2>a$ then the digit 1 in $\pi$ along with the digits 2, $\pi_2$, and $a$ in $\pi^r$ form a 1243 pattern.  Therefore, $\pi_2=a-1$.  A similar argument shows that $\pi_j=a-j+1$ for $1 \leq j \leq a-1$.

We have shown that $\pi_{a-1}=2$.  Now, consider $\pi_a$.  We claim that $\pi_a \in \{1,n\}$.  Suppose to the contrary that $a+1 \leq \pi_a \leq n-1$.  If $a+1 \leq \pi_a \leq n-2$, then the digits 2, $\pi_a$, and $n$ in $\pi$ along with $n-1$ in $\pi^r$ form a 1243 pattern.  If $\pi_a=n-1$ then the digits 2 and $a+1$ in $\pi$ along with $n$ and $n-1$ in $\pi^r$ form a 1243 pattern.  Therefore, either $\pi_a=1$ or $\pi_a=n$. 

By a similar argument, there are at most 2 possible values for $\pi_j$ where $a+1\leq j\leq n-2$.  Either $\pi_j=1$ or $\pi_j = \max(\{1,\dots,n\} \setminus \{\pi_1,\dots, \pi_{j-1}\})$.  Finally, $a+2$ and $a+1$ may appear in either order in $\pi$. Hence, the number $1$ can take position $h$ where $a \leq h \leq n$, and the remaining positions of $\pi_a \cdots \pi_n$ must be filled with either $n (n-1)  \cdots (a+3)( a+2)( a+1)$ or $n( n-1) \cdots( a+3)( a+1)( a+2)$. 

To summarize, if $2 \leq \pi_1 \leq n-2$, where $\pi_1=a$, then we have $\pi_1\cdots \pi_{a-1}=a\cdots 2$.  The digit 1 may appear in any of the $n-a+1$ remaining positions.  After the initial $a-1$ digits and the location of $1$ are chosen, there are 2 ways to fill in the rest of $\pi$.  Either the remaining digits are $n (n-1)  \cdots (a+3)( a+2)( a+1)$ or $n( n-1) \cdots( a+3)( a+1)( a+2)$.   There are $\sum_{a=2}^{n-2} 2(n-a+1)=n^2-n-6$ 1243-avoiding reverse double lists on $n$ letters in this case.

Combining all four cases, we see that $$\mathrm{r}_n(1243)=\mathrm{r}_{n-1}(1243) + 2 + 2 + n^2-n-6 = \mathrm{r}_{n-1}(1243)+(n+1)(n-2).$$

The theorem then follows from the facts that (i) the $n=2$ terms of the theorem statement and the claim agree and (ii) $$\left(\frac{n^3}{3}-\frac{7n}{3}+4\right) - \left(\frac{(n-1)^3}{3}-\frac{7(n-1)}{3}+4\right) = (n+1)(n-2).$$
\end{proof}

\subsubsection{The patterns 1324 and 2143}

We now come to two patterns that are Wilf-equivalent for non-trivial reasons.  Although there is not a simple symmetry of graphs that demonstrates $\mathrm{r}_n(1324)=\mathrm{r}_n(2143)$, we show that $\mathrm{r}_n(1324)$ and $\mathrm{r}_n(2143)$ each satisfy the same recurrence.

\begin{theorem}
$\mathrm{r}_n(1324)=5\cdot 2^{n-2}-4$ for $n \geq 3$.
\label{T:1324}
\end{theorem}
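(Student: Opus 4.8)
The plan is to show that $\mathrm{r}_n(1324)$ satisfies the recurrence $\mathrm{r}_n(1324)=2\,\mathrm{r}_{n-1}(1324)+4$ for $n\ge 4$, with base value $\mathrm{r}_3(1324)=6$, and then to solve it. As in the proof of Theorem \ref{T:1243}, I would write $\sigma=\pi\pi^r\in\mathcal{R}_n(1324)$ and split into cases according to the value of $\pi_1$. The organizing observation is that the largest letter $n$ is the maximum of $\sigma$, so in any occurrence of $1324$ it can only play the role of the final ``$4$''; equivalently, a copy of $n$ completes a $1324$ exactly when it is preceded somewhere earlier in $\sigma$ by an occurrence of $132$ among smaller letters. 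Since $1324$ contains both $132$ and $213$ as patterns, avoiding either of these shorter patterns already forces $1324$-avoidance in the relevant factor, and this is what makes the two extreme cases collapse to constants.

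The two extreme cases each contribute $2$, accounting for the $+4$. If $\pi_1=n$, then $\sigma=n\,\tau\,n$ where $\tau$ is a reverse double list on $[n-1]$; the leading $n$ sits at the start and cannot be a ``$4$'', while the trailing $n$ completes a $1324$ precisely when $\tau$ contains $132$. Hence $\sigma$ avoids $1324$ iff $\tau$ avoids $132$, and by Proposition \ref{T:132} there are exactly $2$ such $\sigma$. Symmetrically, if $\pi_1=1$, then $\sigma=1\,\tau\,1$ with $\tau$ a reverse double list on $\{2,\dots,n\}$; here the leading $1$ can only be a ``$1$'', and it completes a $1324$ iff a $213$ among the larger letters occurs after it, so $\sigma$ avoids $1324$ iff $\tau$ avoids $213$. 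Since $213$ is trivially Wilf equivalent to $132$, Proposition \ref{T:132} again gives exactly $2$ such $\sigma$.

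The heart of the argument, and the step I expect to be the main obstacle, is the middle range $2\le\pi_1\le n-1$, which must contribute exactly $2\,\mathrm{r}_{n-1}(1324)$. Here I would delete both copies of $n$ to obtain $\sigma'=\pi'(\pi')^r\in\mathcal{R}_{n-1}(1324)$ and analyze the reverse operation of reinserting $n$ symmetrically at a position $p$ of $\pi'$. The role-of-$n$ observation shows that such a reinsertion yields a $1324$-avoider iff the factor of $\pi'$ to the left of $p$ avoids $132$ and the prefix of $\sigma'$ ending just before the mirrored copy of $n$ avoids $132$; consequently the admissible positions $p$ form a contiguous interval, empty unless $\pi'$ itself avoids $132$, whose length is governed by the longest $132$-avoiding prefix of $\sigma'$. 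The delicate point is to show that, once the two extreme insertions (which reproduce the $\pi_1\in\{1,n\}$ cases) are separated off, the remaining interior insertions total exactly $2\,\mathrm{r}_{n-1}(1324)$. I would establish this identity by induction on $n$, and I expect the bulk of the bookkeeping to lie in controlling the distribution of the longest $132$-avoiding prefix statistic across $\mathcal{R}_{n-1}(1324)$ so that the interval lengths sum correctly.

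Finally, solving the recurrence is routine: the homogeneous solution is $C\,2^n$ and a constant particular solution is $-4$, so $\mathrm{r}_n(1324)=C\,2^n-4$; imposing $\mathrm{r}_3(1324)=6$ gives $C=\tfrac54$, whence $\mathrm{r}_n(1324)=5\cdot 2^{\,n-2}-4$ for $n\ge 3$, as claimed. The same recurrence, and hence the same closed form, is what one then verifies for $2143$, yielding the asserted non-trivial Wilf equivalence.
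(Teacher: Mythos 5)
Your two extreme cases are correct and cleanly argued: for $\pi_1=n$ the list avoids $1324$ iff the inner reverse double list avoids $132$, for $\pi_1=1$ iff it avoids $213$, giving $2+2=4$ by Proposition \ref{T:132}, and the solution of the recurrence at the end is routine, as you say. But the heart of the proof---that the lists with $2\le\pi_1\le n-1$ number exactly $2\,\mathrm{r}_{n-1}(1324)$---is precisely the step you defer, and the delete-$n$/reinsert-$n$ scheme you sketch for it does not close the gap. The number of admissible reinsertion positions is not ``$2$ per element of $\mathcal{R}_{n-1}(1324)$ plus boundary terms'': it varies from element to element. Already at $n=4$, the six elements of $\mathcal{R}_3(1324)$ admit $2,0,3,3,4,4$ insertions respectively (for instance $\pi'=132$ admits none, while $\pi'=321$ admits all four); the total is $16$ only because these uneven contributions happen to sum correctly. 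So no per-element or interval-endpoint argument suffices; you would need to control the full distribution of the longest $132$-avoiding-prefix statistic over $\mathcal{R}_{n-1}(1324)$, i.e.\ strengthen the induction to a refined multivariate statement, which you have neither formulated nor proved. A smaller confusion: insertion at position $p=1$ does give $\pi_1=n$, but the $\pi_1=1$ avoiders are not a second ``extreme insertion''---they arise from insertions at arbitrary positions $p\ge 2$ into those $\sigma'$ whose first letter is $1$---so the bookkeeping does not even split the way your outline assumes.

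For comparison, the paper sidesteps all of this by conditioning not on the value of $\pi_1$ but on whether $|\pi_1-\pi_2|=1$. When $\sigma_2=\sigma_1\pm 1$, the first letter is value-adjacent to the second, so it can never be essential to an occurrence of $1324$; deleting the first and last letters then gives a clean $2$-to-$1$ correspondence with $\mathcal{R}_{n-1}(1324)$, contributing $2\,\mathrm{r}_{n-1}(1324)$ with no distributional analysis. A short argument shows $|\pi_1-\pi_2|>1$ forces $\pi$ to begin $(n-2)\,n$ or $3\,1$ with the remainder forced by $132$-avoidance (Proposition \ref{T:132} again), contributing exactly $4$. Note that the paper's ``$+4$'' is a different set from yours: at $n=4$ it is $\{2413,2431,3124,3142\}$, whereas yours is $\{1234,1243,4312,4321\}$; the two decompositions slice $\mathcal{R}_n(1324)$ differently, and only in the paper's slicing is every class counted by a uniform local argument. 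To salvage your outline you must either prove the refined distributional statement about prefix statistics, or switch to a decomposition of this kind.
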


\begin{proof}
The case where $n = 3$ is easy to check by brute force, so we focus on when $n \geq 4$.  Let $\sigma=\pi\pi^r \in \mathcal{R}_n(1324)$. We consider 2 cases.

Suppose $\sigma^{\prime}  \in \mathcal{R}_{n-1}(1324)$ with $\sigma^{\prime}_1=a$.  We can build $\sigma =\pi\pi^r \in \mathcal{R}_n(1324)$ by either using $\sigma_1=a$ or $\sigma_1=a+1$, and incrementing all digits of $\sigma^{\prime}$ that are at least $\sigma_1$ by 1.  It is impossible for $\sigma_1$ to participate in a 1324 pattern with $\sigma_2$ since they are consecutive integers.  It is impossible for $\sigma_1$ to participate in a 1324 pattern without $\sigma_2$ since any 1324 pattern involving $\sigma_1$ would imply the existence of a 1324 pattern using $\sigma_2$ in place of $\sigma_1$.  Therefore, there are $2\mathrm{r}_{n-1}(1324)$ reverse double lists where $\sigma_2=\sigma_1 \pm 1$.

Now, suppose that $\left|\pi_2-\pi_1\right|>1$.  Then if $\pi_1<\pi_2<n$, the digits $\pi_1$, $\pi_2$, and $\pi_1+1$ in $\pi$ along with the digit $n$ in $\pi^r$ form a 1324 pattern.  If $\pi_1>\pi_2>1$, then the digit 1 in $\pi$ along with the digits $\pi_1-1$, $\pi_2$, and $\pi_1$ in $\pi^r$ form a 1324 pattern.  So, if $\left|\pi_2-\pi_1\right|>1$ it must be the case that either $\pi_1<\pi_2=n$ or $\pi_1>\pi_2=1$.

If $\pi_1<\pi_2=n$, then $\pi_1=n-2$, otherwise $\pi_1$ and $\pi_1+2$ in $\pi$ and $\pi_1+1$ and $n$ in $\pi^r$ form a 1324 pattern.  Also, the word $\pi_3\cdots \pi_n\pi_n \cdots \pi_3$ must avoid 132, or $n\pi_3\cdots \pi_n\pi_n \cdots \pi_3n$ will contain a 1324 pattern.  We know that there are exactly two 132-avoiding reverse double lists for $n \geq 2$, so there are two 1324-avoiding reverse double lists on $n$ letters where $\pi_1<\pi_2=n$.  Similarly, by taking the complements of all reverse double lists where $\pi_1<\pi_2=n$ we see that there are two reverse double lists on $n$ letters where $\pi_1>\pi_2=1$.

In summary, given $\sigma^{\prime} \in \mathcal{R}_{n-1}(1324)$, we can produce exactly two members $\sigma=\pi\pi^r$ of $\mathcal{R}_{n}(1324)$ where $\left|\pi_1-\pi_2\right|=1$.  There are 4 additional reverse double lists where $\left|\pi_1-\pi_2\right|=2$, so for $n \geq 4$, we have 

$$\mathrm{r}_{n}(1324)=2\mathrm{r}_{n-1}(1324)+4.$$

From this recurrence it follows that $\mathrm{r}_{n}(1324) = 5\cdot 2^{n-2}-4$ for $n \geq 3$.
\end{proof}

Reverse double lists avoiding 2143 follow the same recurrence but for different structural reasons.

\begin{theorem}
$\mathrm{r}_n(2143)=5\cdot 2^{n-2}-4$ for $n\geq 3$.
\label{T:2143}
\end{theorem}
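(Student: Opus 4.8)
The plan is to prove that $\mathrm{r}_n(2143)$ satisfies the same recurrence $\mathrm{r}_n(2143)=2\mathrm{r}_{n-1}(2143)+4$ for $n\geq 4$ that was derived for $\mathrm{r}_n(1324)$ in the proof of Theorem~\ref{T:1324}, since both sequences start at $6$ when $n=3$ and $5\cdot 2^{n-2}-4$ is the unique solution of that linear recurrence. After checking the base case $n=3$ by hand, I would fix $\sigma=\pi\pi^r\in\mathcal{R}_n(2143)$ and split on the quantity $|\pi_1-\pi_2|$, exactly as in the $1324$ argument: the lists with $|\pi_1-\pi_2|=1$ will contribute $2\mathrm{r}_{n-1}(2143)$, and the lists with $|\pi_1-\pi_2|\geq 2$ will contribute the additive constant $4$. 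Two symmetries streamline the bookkeeping. First, every reverse double list is a palindrome, $\sigma=\sigma^r$, so as in Section~\ref{S:three} we have $\mathcal{R}_n(2143)=\mathcal{R}_n\big((2143)^r\big)=\mathcal{R}_n(3412)$; hence $\sigma$ avoids $2143$ if and only if it avoids $3412$, and I may exhibit a forbidden configuration using whichever of the two patterns is more convenient. Second, the complement map $\sigma\mapsto\sigma^c$ sends $\mathcal{R}_n(2143)$ bijectively onto $\mathcal{R}_n(3412)=\mathcal{R}_n(2143)$ while exchanging the ascent/descent type of $\pi_1\pi_2$, so I can treat only the case $\pi_1<\pi_2$ in each step and recover the $\pi_1>\pi_2$ case for free.

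For the main term I would reuse the insertion construction of Theorem~\ref{T:1324}: given $\sigma'=\pi'\pi'^r\in\mathcal{R}_{n-1}(2143)$ with $\pi'_1=a$, prepend a new first letter equal to $a$ or to $a+1$, incrementing every letter that is at least the inserted value, so that $\{\pi_1,\pi_2\}$ becomes a pair of consecutive integers. I must check that this preserves $2143$-avoidance and that deleting $\pi_1$ and standardizing recovers $\sigma'$, making the map a bijection from $\mathcal{R}_{n-1}(2143)\times\{+,-\}$ onto $\{\sigma\in\mathcal{R}_n(2143):|\pi_1-\pi_2|=1\}$, a set of size $2\mathrm{r}_{n-1}(2143)$. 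The avoidance direction is the crux: the inserted value $c=\pi_1$ occurs only at positions $1$ and $2n$, so in any putative $2143$ pattern the copy at position $1$ can play only the role of the leading ``$2$,'' and (symmetrically) the copy at position $2n$ only the role of the trailing ``$3$.'' In either case the consecutive neighbor $\pi_2=c\pm1$, or its mirror at position $2n-1$, can be substituted for $c$ without disturbing the relative order of the other three chosen entries, producing a $2143$ pattern lying entirely within a copy of $\sigma'$ and contradicting $\sigma'\in\mathcal{R}_{n-1}(2143)$.

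For the additive constant I would analyze $\sigma$ with $\pi_2-\pi_1\geq 2$, the case $\pi_1-\pi_2\geq 2$ following by complementation. Working in the $3412$ form, I would argue that a value strictly between $\pi_1$ and $\pi_2$, together with an increasing pair of small letters extracted from $\pi$ and its reversal (such a pair always occurs after position $2$ once enough letters lie below $\pi_1$), forces a $3412$ pattern unless $\pi_1$ and $\pi_2$ are pushed to extreme values. This pins down $\pi_1\pi_2$ and leaves a reverse double list on the remaining letters that must avoid a pattern of length $3$; by Proposition~\ref{T:132} there are exactly two completions, giving two lists with $\pi_1<\pi_2$ and, via the complement symmetry, two more with $\pi_1>\pi_2$, for a total of $4$.

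I expect the exceptional-value analysis of the previous paragraph to be the main obstacle. Unlike the clean ``$\pi_2\in\{1,n\}$'' dichotomy that appears for $1324$, the pattern $2143$ distributes its two descents across the $\pi$ and $\pi^r$ halves, so determining precisely which large gaps $|\pi_1-\pi_2|$ force a forbidden $2143$ (equivalently $3412$) demands careful tracking of the two occurrences of each letter; I would likely organize this by the exact positions of $1$, $n$, and the letters between $\pi_1$ and $\pi_2$. Once the two case counts $2\mathrm{r}_{n-1}(2143)$ and $4$ are established, adding them gives $\mathrm{r}_n(2143)=2\mathrm{r}_{n-1}(2143)+4$, and solving this recurrence with the initial value $\mathrm{r}_3(2143)=6$ yields $\mathrm{r}_n(2143)=5\cdot 2^{n-2}-4$ for $n\geq 3$.
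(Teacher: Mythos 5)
Your proposal fails at its foundation: the structural dichotomy you borrow from the proof of Theorem~\ref{T:1324} simply does not hold for $2143$. Already at $n=4$ the sixteen members of $\mathcal{R}_4(2143)$ split as $4$ lists with $|\pi_1-\pi_2|=1$ (namely $\pi\in\{1234,1243,4312,4321\}$) and $12$ lists with $|\pi_1-\pi_2|\geq 2$ (e.g.\ $\pi=1423$ gives $\sigma=14233241$, which avoids $2143$) --- exactly the opposite of your claimed counts $2\mathrm{r}_3(2143)=12$ and $4$. Correspondingly, your insertion map is not even well defined into $\mathcal{R}_n(2143)$: take $\sigma'=\pi'\pi'^r$ with $\pi'=2413$, which avoids $2143$; prepending the value $2$ and incrementing the digits that are at least $2$ gives $\pi=23514$, and $\sigma=2351441532$ contains $2143$ as the subsequence $\sigma_1\sigma_4\sigma_8\sigma_9=2153$. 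The substitution trick that saves the $1324$ argument breaks here because in $2143$ the first letter of $\sigma$ can play the leading ``2'' of the pattern while the \emph{mirror} copy of $\pi_2$ (at position $2n-1$) plays the trailing ``3''; replacing $\sigma_1$ by $\sigma_2$ then collides with that copy and produces no valid occurrence inside $\sigma'$. This is precisely why the paper remarks that $2143$ satisfies the same recurrence ``for different structural reasons.''

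The correct decomposition is by the value of $\pi_1$, not by $|\pi_1-\pi_2|$. If $\pi_1\in\{1,n\}$, then $\sigma_1$ is extremal and can never participate in a $2143$ pattern, so wrapping any member of $\mathcal{R}_{n-1}(2143)$ in a $1$ (with all digits incremented) or in an $n$ contributes $2\mathrm{r}_{n-1}(2143)$. If $1<\pi_1<n$, then every digit larger than $\pi_1$ must appear in increasing order in $\pi^r$ (else $\pi_1$ and $1$ in $\pi$ together with a descent among larger digits in $\pi^r$ give $2143$), and every digit smaller than $\pi_1$ must appear in increasing order in $\pi$; this rules out $3\leq\pi_1\leq n-2$ entirely (one of $\pi_1 1 n(n-1)$ or $21n\pi_1$ occurs) and leaves exactly two lists for $\pi_1=2$ and, by complementation, two for $\pi_1=n-1$, giving the additive constant $4$. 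Your symmetry observations ($\mathcal{R}_n(2143)=\mathcal{R}_n(3412)$ and closure under complement) are correct and remain useful, but the counting skeleton of your proposal cannot be repaired without switching to this different case structure.
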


\begin{proof}
The case where $n = 3$ is easy to check by brute force, so we focus on when $n \geq 4$.  Let $\sigma=\pi\pi^r \in \mathcal{R}_n(2143)$. 

Suppose $\sigma^{\prime} \in \mathcal{R}_{n-1}(2143)$.  We can build $\sigma =\pi\pi^r \in \mathcal{R}_n(2143)$ by either using $\sigma_1=1$ and incrementing all digits of $\sigma^{\prime}$ or by using $\sigma_1=n$.  In both cases is impossible for $\sigma_1$ to participate in a 2143 pattern since $\sigma_1$ is either the largest or the smallest digit in $\sigma$.  Therefore, there are $\mathrm{r}_{n-1}(2143)$ reverse double lists where $\sigma_1=1$ and $\mathrm{r}_{n-1}(2143)$ reverse double lists where $\sigma_1=n$.

Now, suppose $1<\pi_1<n$.  Then, every digit larger than $\pi_1$ must be in increasing order in $\pi^r$, otherwise $\pi_1$ and 1 in $\pi$ together with the decreasing pair of larger digits in $\pi^r$ form a 2143 pattern.  Similarly, every digit smaller than $\pi_1$ must appear in increasing order in $\pi$, otherwise the decreasing pair of smaller digits in $\pi$ together with $n$ and $\pi_1$ in $\pi^r$ form a 2143 pattern.

If $3 \leq \pi_1 \leq n-2$, we already have a problem since either 1 appears before $n$ in $\pi$ or 1 appears before $n$ in $\pi^r$.  In the first case, $\pi_1 1 n(n-1)$ is a copy of 2143 in $\sigma$, and in the second case, $21n\pi_1$ is a copy of 2143 in $\sigma$.

So it must be the case that $\pi_1=2$ or $\pi_1=n-1$.  In the first case, either $\pi_{n-1}=1$ or $\pi_n=1$ and all other digits of $\pi$ are in decreasing order.  In the second case, we take the complement of the words where $\pi_1=2$ to get the words where $\pi_1=n-1$.

In summary, given $\sigma^{\prime} \in \mathcal{R}_{n-1}(2143)$, we can produce one member $\sigma=\pi\pi^r$ of $\mathcal{R}_{n}(2143)$ where $\pi_1=1$ and one where $\pi_1=n$.  There are 4 additional reverse double lists where $\pi_1=2$ or $\pi_1=n-1$, so for $n \geq 4$, we have 

$$\mathrm{r}_{n}(2143)=2\mathrm{r}_{n-1}(2143)+4.$$

From this recurrence it follows that $\mathrm{r}_{n}(2143) = 5\cdot 2^{n-2}-4$ for $n \geq 3$.
\end{proof}

\subsubsection{The pattern 1423}

\begin{theorem}
$\mathrm{r}_n(1423)=2\mathrm{r}_{n-1}(1423)+\mathrm{r}_{n-3}(1423)+2$ for $n\geq 5$.
\label{T:1423}
\end{theorem}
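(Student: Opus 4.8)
The plan is to mimic the structure of the preceding proofs in this subsection: establish a recurrence by conditioning on the value of $\pi_1$, and the natural recurrence to aim for is $\mathrm{r}_n(1423)=2\mathrm{r}_{n-1}(1423)+\mathrm{r}_{n-3}(1423)+2$. The term $2\mathrm{r}_{n-1}(1423)$ should come from the two ``easy'' positions for the largest letter $n$ that only ever plays a forced role in a $1423$ pattern, so that a $\sigma'\in\mathcal{R}_{n-1}(1423)$ can be extended in two ways; the $+2$ should come from a small number of exceptional configurations; and the genuinely new feature, the $\mathrm{r}_{n-3}(1423)$ term, should come from a case where deleting three letters (rather than one) returns an avoider on $n-3$ letters. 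First I would set up $\sigma=\pi\pi^r\in\mathcal{R}_n(1423)$ and split into cases according to whether $\pi_1\in\{1,n\}$ or $\pi_1$ is an intermediate value.

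Next I would analyze the role of $n$. Since $n$ is the maximum, in any $1423$ occurrence $n$ can only play the role of the ``4'' (the second-largest value being matched to the largest symbol). I expect to show that if $\sigma\in\mathcal{R}_n(1423)$ then either $n$ sits at the two ends (so $\sigma=n\sigma'n$ with $\sigma'\in\mathcal{R}_{n-1}(1423)$, giving one family), or $n$ appears in the second/second-to-last positions with $\pi_1$ forced to a specific value, giving a second family in bijection with $\mathcal{R}_{n-1}(1423)$; together these account for the $2\mathrm{r}_{n-1}(1423)$ term. The key structural claim to nail down is exactly which placements of $n$ force a $1423$ pattern: because the ``1'' and the ``23'' of the pattern can be drawn from either half of $\pi\pi^r$, I need to check occurrences that straddle the center, exactly as the $1243$ and $1324$ proofs did (e.g. a small digit in $\pi$ together with $n$ and two appropriately-ordered larger digits in $\pi^r$).

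For the delicate part, I would isolate the configurations responsible for the $\mathrm{r}_{n-3}(1423)$ term. My expectation is that there is a case—most likely $\pi_1=2$ or its complement $\pi_1=n-1$, or a case where both $1$ and $n$ are pinned near the front—in which avoidance forces the first three letters of $\pi$ into a rigid block (so that $1$, $n$, and one more value are frozen), while the remaining letters, after deletion and standardization, range freely over $\mathcal{R}_{n-3}(1423)$. I would verify that the deletion map (remove the three frozen letters from each half and reduce) is a bijection onto $\mathcal{R}_{n-3}(1423)$: deletions of letters never create a pattern, and conversely every $\sigma''\in\mathcal{R}_{n-3}(1423)$ lifts to a valid $\sigma$ because the three frozen letters, by their extremal positions, cannot complete a $1423$ with the rest. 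Finally I would collect the small constant number of sporadic avoiders not captured by the two recursive families (the $+2$), check the base cases for the small values of $n$ where the recurrence first applies, and confirm the total matches $2\mathrm{r}_{n-1}(1423)+\mathrm{r}_{n-3}(1423)+2$.

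The main obstacle I anticipate is the bookkeeping of cross-center $1423$ occurrences: unlike an ordinary word, $\sigma=\pi\pi^r$ forces the second half to be the mirror of the first, so a forbidden pattern can use, say, one letter from $\pi$ as the ``1'' and three letters from $\pi^r$ (which are the reversed, hence increasing-from-the-right, tail of $\pi$) as the ``423''. Keeping track of which case forces a one-letter peel versus a three-letter peel—and proving that the three-letter case is genuinely disjoint from the others and contributes a clean $\mathrm{r}_{n-3}(1423)$ rather than overcounting with the $2\mathrm{r}_{n-1}(1423)$ families—will require careful case enumeration on the values of $\pi_1,\pi_2,\pi_3$ and the positions of $1$ and $n$.
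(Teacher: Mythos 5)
Your plan has a genuine gap: you assume each term of the recurrence $2\mathrm{r}_{n-1}(1423)+\mathrm{r}_{n-3}(1423)+2$ has a direct combinatorial realization in the case analysis on $\pi_1$, and that is not what happens. When you carry out the analysis, the cases contribute as follows: $\pi_1=n$ gives $\mathrm{r}_{n-1}(1423)$; $\pi_1=n-1$ forces $\pi_2=n$ and gives $\mathrm{r}_{n-2}(1423)$ (not a second copy of $\mathrm{r}_{n-1}$, since \emph{both} $n-1$ and $n$ are frozen, so the word is $(n-1)n\sigma''n(n-1)$ with $\sigma''\in\mathcal{R}_{n-2}(1423)$); each intermediate value $\pi_1=a$ with $3\leq a\leq n-2$ forces the prefix $a(a+1)\cdots(n-2)$ followed by $\{n-1,n\}$ in either order and contributes $2\mathrm{r}_{a-1}(1423)$, so these cases produce the full-history sum $\sum_{i=2}^{n-3}2\mathrm{r}_i(1423)$; $\pi_1=2$ gives $2(n-1)$ avoiders (linear in $n$, not a bounded sporadic set); and $\pi_1=1$ gives $2$. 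So there is no case contributing $\mathrm{r}_{n-3}(1423)$, no three-letter peel, and no constant-size exceptional family; your search for them would come up empty, and the two families you hope will give $2\mathrm{r}_{n-1}(1423)$ in fact give $\mathrm{r}_{n-1}(1423)+\mathrm{r}_{n-2}(1423)$.

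The missing idea is a second, purely algebraic step. The case analysis yields the full-history recurrence
\begin{equation*}
\mathrm{r}_n(1423)=\mathrm{r}_{n-1}(1423)+\mathrm{r}_{n-2}(1423)+\sum_{i=2}^{n-3}2\mathrm{r}_i(1423)+2n,
\end{equation*}
and the stated three-term recurrence is obtained by differencing: substitute the instance of this identity at $n=k$ into the instance at $n=k+1$ to eliminate the sum and the linear term $2n$, which collapses everything to $\mathrm{r}_{k+1}(1423)=2\mathrm{r}_{k}(1423)+\mathrm{r}_{k-2}(1423)+2$. In other words, the terms $2\mathrm{r}_{n-1}$, $\mathrm{r}_{n-3}$, and $+2$ are artifacts of telescoping, not of any structural decomposition of $\mathcal{R}_n(1423)$; a proof that insists on interpreting them combinatorially, as yours does, will stall once the actual case counts are in hand.
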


\begin{proof}
Suppose $n \geq 5$, let $\sigma=\pi\pi^r \in \mathcal{R}_n(1423)$, and let $\sigma^{\prime} \in \mathcal{R}_{n-1}(1423)$ be the reverse double list formed by deleting both copies of $n$ in $\sigma$.  We consider 5 cases based on the value of $\pi_1$.

Suppose $\pi_1=n$.  Since $n$ can only play the role of a $4$ in a $1423$ pattern, $n\sigma^{\prime}n \in \mathcal{R}_n(1423)$ for any $\sigma^{\prime} \in \mathcal{R}_{n-1}(1423)$.  There are $\mathrm{r}_{n-1}(1423)$ reverse double lists on $n$ letters in this case.

Suppose $\pi_1=n-1$.  We claim that $\pi_2=n$.  Assume for contradiction $\pi_2\neq n$.  This implies $\pi_2=1$ or $2\leq \pi_2\leq n-2$. If $\pi_2=1$, then the digits 1 and $n$ in $\pi$ along with the digits 2 and $n-1$ in $\pi^r$ form a 1423 pattern. If $2\leq \pi_2 \leq n-2$, then the digit 1 in $\pi$ along with the digits $n$, $\pi_2$, and $n-1$ in $\pi^r$ form a 1423 pattern. Thus, when $\pi_1=n-1$, $\pi_2=n$.  Notice that $\pi_2=n$ can only play the role of 4 in a 1423 pattern.  However, since $\pi_1=n-1$, it cannot  be part of a 1423 pattern in $\sigma$. On the other hand, $n-1$ can play the role of a 3 or a 4. If the copy of $n-1$ in $\pi^r$ serves as a 3 in an occurrence of 1423, then the copy of $n$ in $\pi$ must serve as the 4; however, there is no number to play the role of the 1 that precedes $n$. Therefore whenever $\sigma^{\prime\prime} \in \mathcal{R}_{n-2}(1423)$, we have $(n-1)n\sigma^{\prime\prime}n(n-1) \in \mathcal{R}_n(1423)$.  There are  $\mathrm{r}_{n-2}(1423)$ reverse double lists on $n$ letters in this case.

Suppose $3 \leq \pi_1 \leq n-2$.  We claim that $\pi_2=\pi_1+1$.  Let $\pi_1=a$.  Assume for contradiction $\pi_2\neq a+1$. This implies $\pi_2=1$, $2 \leq \pi_2\leq a-1$, $a+2 \leq \pi_2 \leq n-1$, or $\pi_2=n$.

If $\pi_2=1$, then the digits 1 and $n$ in $\pi$ along with the digits 2 and $a$ in $\pi^r$ form a 1423 pattern. If $2 \leq \pi_2 \leq a-1$, then the digit 1 in $\pi$ along with the digits $n$, $\pi_2$, and $a$ in $\pi^r$ form a 1423 pattern.  If $a+2 \leq \pi_2 \leq n-1$, then the digits $a$ and $n$ in $\pi$ along with the digits $a+1$ and $\pi_2$ in $\pi^r$ form a 1423 pattern. If $\pi_2=n$, then the digits  $a$, $n$, and $a+1$ in $\pi$ along with the digit $n-1$ in $\pi^r$  form a 1423 pattern.  Thus, when $\pi_1=a$, then $\pi_2=a+1$.  By a similar argument, $\pi_j=a+j-1$ for $1\leq j \leq n-a-1$.  

We have that $\pi_{n-a-1}=n-2$.  We claim that $\{\pi_{n-a},\pi_{n-a+1}\}=\{n-1,n\}$.  Suppose to the contrary that $\pi_{n-a} \notin \{n-1,n\}$.  If $\pi_{n-a}<a-1$, then the digits $\pi_{n-a}$ and $n$ in $\pi$ along with the digits $a-1$ and $a$ in $\pi^r$ form a 1423 pattern.  If $\pi_{n-a}=a-1$, then the digit 1 in $\pi$ along with the digits $n$, $a-1$, and $a$ in $\pi^r$ form a 1423 pattern.  So it must be the case that $\pi_{n-a} \in \{n-1,n\}$.  Similarly, $\pi_{n-a+1}=\{n-1,n\} \setminus \{\pi_{n-a}\}$.  Ultimately, either $\pi_{n-a}=n-1$ and $\pi_{n-a+1}=n$ or $\pi_{n-a}=n$ and $\pi_{n-a+1}=n-1$.

Now, the remaining $a-1$ positions of $\pi$ can be filled with $\sigma^* \in \mathcal{R}_{a-1}(1423)$.  Because $n$ and $n-1$ can be interchanged and because there exist $2\mathrm{r}_{a-1}(1423)$ reverse double lists on $n$ letters where $\pi_1=a$ for $3\leq a\leq n-2$, there are $\displaystyle \sum_{i=2}^{n-3} 2\mathrm{r}_i(1423)$ reverse double lists in this case.

Suppose that $\pi_1=2$.  We claim that either $\pi_2=1$ or $\pi_2=3$.  Assume for contradiction that $\pi_2 \notin \{1,3\}$. This implies $\pi_2=n$ or $4 \leq \pi_2\leq n-1$. If $\pi_2=n$, then the digits 2, $n$, and 3 in $\pi$ along with the digit $n-1$ in $\pi^r$  form a 1423 pattern. If $4 \leq \pi_2 \leq n-1$, then the digits 2 and $n$ in $\pi$ along with the digits $3$ and $\pi_2$ in $\pi^r$ form a 1423 pattern. Therefore, if $\pi_1=2$, then $\pi_2=1$ or $\pi_2=3$.  By a similar argument, $\pi_j \in \{1, \min(\{2,\dots, n\} \setminus \{\pi_1, \dots, \pi_{j-1}\})\}$ for $2 \leq j \leq n-2$.  Finally, $n-1$ may either precede or follow $n$.  We have $n-1$ choices for the location of 1 and 2 choices for the order of $n-1$ and $n$ in $\pi$, so there are $2(n-1)$ 1423-avoiding reverse double lists on $n$ letters where $\pi_1=2$.

Suppose that $\pi_1=1$.  We claim that $\pi_2=2$.  Assume for contradiction $\pi_2 \neq 2$.  Then the digits 1 and $n$ in $\pi$ along with the digits 2 and $\pi_2$ in $\pi^r$ form a 1423 pattern.  By a similar argument, $\pi_i=i$ for $1 \leq i \leq n-2$.  Finally, $\pi_{n-1}=n-1$ and $\pi_n=n$ or $\pi_{n-1}=n$ and $\pi_n=n-1$, giving two 1423-avoiding reverse double lists on $n$ letters when $\pi_1=1$.

Combining our 5 cases, we have shown that for $n \geq 5$,

\begin{equation}
\mathrm{r}_n(1423)=\mathrm{r}_{n-1}(1423)+\mathrm{r}_{n-2}(1423)+\sum_{i=2}^{n-3}2\mathrm{r}_i(1423)+2n.
\label{E:1423claim}
\end{equation}

We are ready to prove the $n \geq 5$ case of the theorem by induction.  For the base case, when $n=5$, $\mathrm{r}_5(1423)=36$, and $2\mathrm{r}_{4}(1423)+\mathrm{r}_{2}(1423)+2=2\cdot 16+2+2=36$, as desired.

Now, suppose $\mathrm{r}_k(1423)=2\mathrm{r}_{k-1}(1423)+\mathrm{r}_{k-3}(1423)+2$ for some $k\geq 5$. From Equation \ref{E:1423claim}, we have:

\begin{equation}
\begin{split}
 \mathrm{r}_{k+1}(1423) 
&=\mathrm{r}_k(1423)+\mathrm{r}_{k-1}(1423)+2(k+1)+\sum_{i=2}^{k-2}2\mathrm{r}_i(1423)\\
&=\mathrm{r}_k(1423)+\mathrm{r}_{k-1}(1423)+2k+2+\sum_{i=2}^{k-3}2\mathrm{r}_i(1423)+2\mathrm{r}_{k-2}(1423) \\
&=\mathrm{r}_k(1423)+2\mathrm{r}_{k-2}(1423)+\mathrm{r}_{k-1}(1423)+2k+2+\sum_{i=2}^{k-3}2\mathrm{r}_i(1423).
\end{split} 
 \label{E:1423induction}
 \end{equation}

On the other hand, when $n=k$, Equation \ref{E:1423claim} gives:
\begin{equation} 
 \sum_{i=2}^{k-3}2\mathrm{r}_i(1423)=\mathrm{r}_k(1423)-\mathrm{r}_{k-1}(1423)-\mathrm{r}_{k-2}(1423)-2k.
 \label{E:1423algebra}
\end{equation}

After substituting Equation \ref{E:1423algebra} into Equation \ref{E:1423induction}, we have:
\begin{equation*}
\begin{split}
\mathrm{r}_{k+1}(1423)=\mathrm{r}_k(1423)
&+ 2\mathrm{r}_{k-2}(1423)+\mathrm{r}_{k-1}(1423)+2k+2\\
&+ \mathrm{r}_k(1423)-\mathrm{r}_{k-1}(1423)-\mathrm{r}_{k-2}(1423)-2k,
\end{split}
\end{equation*}
which simplifies to 

\begin{equation*}
\begin{split}
 \mathrm{r}_{k+1}(1423)
 &=2\mathrm{r}_{k}(1423)+\mathrm{r}_{k-2}(1423)+2\\
 &= 2\mathrm{r}_{(k+1)-1}(1423)+\mathrm{r}_{(k+1)-3}(1423)+2.
\end{split}
\end{equation*}

\end{proof}

\subsubsection{The pattern 1432}

\begin{theorem}
$\mathrm{r}_n(1432)=2\mathrm{r}_{n-1}(1432)+\mathrm{r}_{n-2}(1432)$ for $n\geq 5$.
\label{T:1432}
\end{theorem}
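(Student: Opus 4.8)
The plan is to mirror the earlier proofs: fix $\sigma = \pi\pi^r \in \mathcal{R}_n(1432)$ and split into cases according to the value of $\pi_1$, showing that only four kinds of first letter contribute and that they assemble directly into the claimed recurrence, with no induction needed. Throughout I will use two standing facts: the largest letter $n$ can only play the role of the ``$4$'' in an occurrence of $1432$, and $\mathrm{r}_m(321) = \mathrm{r}_m(123) = 0$ for $m \ge 4$, the latter holding by Theorem~\ref{T:erdos} together with the reverse symmetry $\mathcal{R}_m(321) = \mathcal{R}_m(123)$.

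First I would dispatch the two cases that each yield $\mathrm{r}_{n-1}(1432)$. If $\pi_1 = n$, then $\sigma = n\sigma'n$, and neither copy of $n$ can lie in a $1432$ pattern: the leading $n$ has no smaller letter before it to act as the ``$1$'', and the trailing $n$ has no letters after it to act as the ``$3$'' and ``$2$''. Deleting both copies of $n$ therefore gives a bijection with $\mathcal{R}_{n-1}(1432)$. If $\pi_1 = n-1$, the same idea applies: since $n-1$ is exceeded only by $n$ and its two copies are the first and last letters of $\sigma$, neither can participate in a $1432$ occurrence, so deleting both copies of $n-1$ and relabeling $n \mapsto n-1$ again yields a bijection with $\mathcal{R}_{n-1}(1432)$.

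Next I would rule out every first letter $\pi_1 = a$ with $a \le n-3$. If $a \le n-4$, the $n-a \ge 4$ letters exceeding $a$ restrict to a reverse double list inside $\sigma$; since $\mathrm{r}_{n-a}(321) = 0$ this sub-list contains a decreasing triple, which together with the leading $a$ is a $1432$ pattern, so the case is empty. The value $a = n-3$ is the genuinely delicate one, because the three letters $n-2, n-1, n$ can avoid $321$ and the Erd\H{o}s--Szekeres obstruction no longer applies directly. Here I would first argue that avoidance forces these three letters into one of the two relative orders in $\pi$ whose reverse double list avoids $321$, and then produce a $1432$ pattern in either case: read in $\pi^r$ the three large letters contain a descent, which extends by the trailing copy of $\pi_1 = n-3$ to a decreasing triple, and this triple is preceded by the letter $1$ (which lies in the first half, hence before these letters in $\pi^r$), giving an occurrence of $1432$. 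Thus $a = n-3$ is also empty.

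The remaining case $\pi_1 = n-2$, which must supply the $\mathrm{r}_{n-2}(1432)$ term, is where I expect the real work to lie. I would first show that avoidance forces $\pi_2 = n$: assuming $\pi_2 \ne n$, either $n-1$ precedes $n$ in $\pi$, in which case in $\pi^r$ the copy of $n$ precedes the copy of $n-1$, and this descent, followed by the trailing $n-2$ and preceded by a small letter, is a $1432$; or $n$ precedes $n-1$ in $\pi$, in which case $\pi_2 \ne n$ forces $\pi_2 \le n-3$, and $\pi_2$ as a ``$1$'' together with the first-half letters $n, n-1$ and the trailing $n-2$ is a $1432$. Once $\pi_2 = n$ is forced I would write $\sigma = (n-2)\,n\,\tau\,n\,(n-2)$ and check that no boundary letter can enter a $1432$ occurrence; the crucial point is that the ``$3$'' and ``$2$'' following any $n$ would both have to lie strictly between $n-2$ and $n$, forcing both to equal $n-1$, which is impossible. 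Hence deleting both copies of $n$ gives a bijection between this case and $\mathcal{R}_{n-2}(1432)$. Adding the four contributions $\mathrm{r}_{n-1}$, $\mathrm{r}_{n-1}$, $\mathrm{r}_{n-2}$, and $0$ gives $\mathrm{r}_n(1432) = 2\,\mathrm{r}_{n-1}(1432) + \mathrm{r}_{n-2}(1432)$, and I anticipate that the forcing steps in the $\pi_1 = n-2$ and $\pi_1 = n-3$ cases are the crux of the argument.
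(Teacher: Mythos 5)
Your proof is correct and takes essentially the same approach as the paper: both argue by cases on $\pi_1$, establish deletion bijections showing that $\pi_1=n$ and $\pi_1=n-1$ each contribute $\mathrm{r}_{n-1}(1432)$ and that $\pi_1=n-2$ contributes $\mathrm{r}_{n-2}(1432)$ (after forcing $\pi_2=n$), and kill all remaining cases by exhibiting explicit $1432$ occurrences. The only real difference is organizational: the paper rules out $1 \le \pi_1 \le n-3$ by direct pattern-finding in two sub-cases, while you rule out $\pi_1 \le n-4$ by restricting to the letters above $\pi_1$ and invoking $\mathrm{r}_m(321)=\mathrm{r}_m(123)=0$ for $m\ge 4$, treating $\pi_1=n-3$ separately; both versions check out.
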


\begin{proof}
Suppose $n \geq 5$, and let $\sigma=\pi\pi^r \in \mathcal{R}_n(1432)$.  We consider 5 cases based on the value of $\pi_1$.

Suppose $\pi_1=1$.  Then $2 \leq \pi_2 \leq n-2$ or $\pi_2\in\{n-1,n\}$.  If $2 \leq \pi_2 \leq n-2$, then the digits 1 and $n$ in $\pi$ along with the digits $n-1$ and $\pi_2$ in $\pi^r$ form a 1432 pattern.  If $\pi_2=n-1$ or $\pi_2=n$, the the digits 1, $\pi_2$, and $n-2$ in $\pi$ along with the digit $n-3$ in $\pi^r$ form a 1432 pattern. Thus, $\pi_1\neq 1$.
	
Suppose $2 \leq \pi_1 \leq n-3$.  Then $\pi_2=n$ or $1\leq\pi_2\leq n-1$. If $\pi_2=n$, then the digits $\pi_1$, $n$, and $n-1$ in $\pi$ along with the digit $n-2$ in $\pi^r$ form a 1432 pattern. If $1\leq \pi_2\leq n-1$, then two cases must be considered. If $\pi_1<\pi_2$, then the digit 1 in $\pi$ along with the digits $n$, $\pi_2$, and $\pi_1$ in $\pi^r$ form a 1432 pattern. If $\pi_1>\pi_2$, then the digits $\pi_2$ and $n$ in $\pi$ along with the digits $n-1$ and $\pi_1$ in $\pi^r$ form a 1432 pattern. Thus, $\pi_1 \geq n-2$.

Suppose $\pi_1=n-2$.  We claim that $\pi_2=n$.  Assume for contradiction that $\pi_2 \neq n$.  This implies $\pi_2=n-1$ or $1 \leq \pi_2 \leq n-3$. If $\pi_2=n-1$, then the digit 1 in $\pi$ along with the digits $n$, $n-1$, and $n-2$ in $\pi^r$ form a 1432 pattern. If $1 \leq \pi_2 \leq n-3$, then the digits $\pi_2$ and $n$ in $\pi$ along with the digits $n-1$ and $n-2$ in $\pi^r$ form a 1432 pattern.  Thus, if $\pi_1=n-2$, then $\pi_2=n$. 
	
Now, $n$ can only play the role of a 4 in a 1432 pattern in $\sigma$, but $\pi_1=n-2$ prevents the first copy of $n$ from being in such a pattern, and the fact that it is the penultimate digit of $\pi^r$ prevents the second copy of $n$ from being in such a pattern. Also, $n-2$ can only play the role of 2, 3, or 4, so it will not play the role of 1 in the beginning of a $1432$ pattern. If the second copy of $n-2$ serves as a 2, then $n-1$ in $\pi^r$ can serve as the 3 and $n$ in $\pi$ must serve as the $4$. However, there is no number to play the role of 1 that precedes $n$ in $\pi$. Thus, the remaining positions can be filled in with any member of $\mathcal{R}_{n-2}(1432)$.  There are $\mathrm{r}_{n-2}(1432)$ 1432-avoiding reverse double lists on $n$ letters where $\pi_1=n-2$.  

Suppose $\pi_1=n-1$.  Since $n-1$ can only play the role of a 3 or a 4 in a $1432$ pattern, it cannot play the role of 1 at the beginning or 2 at the end of such a pattern.  There are $\mathrm{r}_{n-1}(1432)$ ways to fill in the remaining digits of $\sigma$, so there are $\mathrm{r}_{n-1}(1432)$ 1432-avoiding reverse double lists on $n$ letters where $\pi_1=n-1$. 

Suppose $\pi_1=n$.  The only role $n$ can play in a $1432$ pattern is a 4, however $n$ only appears as the first and the last member of $\sigma$, so it cannot be involved in a 1432 pattern.  There are $\mathrm{r}_{n-1}(1432)$ ways to fill in the remaining digits of $\sigma$, so there are $\mathrm{r}_{n-1}(1432)$ 1432-avoiding reverse double lists on $n$ letters where $\pi_1=n$.

Combining our 5 cases, we have that for $n \geq 5$,

$$\mathrm{r}_n(1432)=2\mathrm{r}_{n-1}(1432)+\mathrm{r}_{n-2}(1432).$$
\end{proof}

\subsubsection{The pattern 1342}

\begin{theorem}
$\mathrm{r}_n(1342)= 2\mathrm{r}_{n-1}(1342)+\mathrm{r}_{n-2}(1342)+2$ for $n\geq 4$.
\label{T:1342}
\end{theorem}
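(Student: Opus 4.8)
The plan is to mirror the first-letter case analysis used for the patterns 1423 and 1432: fix $\sigma = \pi\pi^r \in \mathcal{R}_n(1342)$ with $n \ge 4$ and split into cases according to the value of $\pi_1$, in each case pinning down the forced structure of $\pi$ and counting. Two observations drive every case. First, the largest letter $n$ can only play the role of the ``4'' of a $1342$, and more generally a letter $v$ can be the ``1'' of a $1342$ (the smallest entry $w$ in an occurrence $w<z<x<y$) only if there is a $231$ pattern among the letters exceeding $v$ occurring after it; since $\sigma = \sigma^r$ is a palindrome, this lets me read off forced configurations from whichever half is convenient. Second, deleting both copies of a letter pinned at the two extreme ends of $\sigma$ often leaves a shorter reverse double list, because such a copy is forced into a role it cannot fill.

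For the two ``large'' cases I would argue by inertness and reduction. If $\pi_1 = n$, then neither copy of $n$ (at the first and last positions of $\sigma$) can be the ``4'', so deleting them gives a bijection with $\mathcal{R}_{n-1}(1342)$, contributing $\mathrm{r}_{n-1}(1342)$. If $\pi_1 = n-1$, then the leading copy could only be the ``1'' and the trailing copy only the ``2'' of a $1342$, each of which would need two letters exceeding $n-1$; as only $n$ exceeds $n-1$, both copies are inert, and deleting them again yields $\mathrm{r}_{n-1}(1342)$. For $\pi_1 = 1$, writing $\tau = \pi_2\cdots\pi_n$, I would show that $\sigma = 1\,\tau\tau^r\,1$ avoids $1342$ if and only if $\tau\tau^r$ avoids $231$: the leading $1$ together with any $231$ in $\tau\tau^r$ is a $1342$, the trailing $1$ can never serve as the ``2'', and every $1342$ contains a $231$. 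Since $231 = 132^r$, Proposition \ref{T:132} gives exactly $\mathrm{r}_{n-1}(231)=\mathrm{r}_{n-1}(132)=2$ such $\tau\tau^r$, so this case contributes $2$.

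The remaining range $2 \le \pi_1 \le n-2$ is where the real work lies. I expect to show first that $\pi_1 = n-2$ forces $\pi_2 = n-1$: if instead $\pi_2 = n$ or $\pi_2 \le n-3$, the palindrome guarantees an occurrence of $n-1$ before $n$ somewhere to the right of position $2$, which together with the trailing $\pi_1=n-2$ and a small letter (the value $1$, or $\pi_2$ itself) produces the $1342$ pattern $(1,n-1,n,n-2)$ or $(\pi_2,n-1,n,n-2)$. Once $\pi_2=n-1$ is forced, all four copies of $n-1$ and $n-2$ are inert, and deleting them reduces to $\mathcal{R}_{n-2}(1342)$, contributing $\mathrm{r}_{n-2}(1342)$. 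For $2 \le \pi_1 = a \le n-3$ I would derive a contradiction: the ``leading $a$'' observation forces the letters exceeding $a$ to appear in $\pi$ in one of the two $231$-avoiding arrangements of Proposition \ref{T:132} (essentially decreasing), while the ``value $1$'' observation forces $\{a+1,\dots,n-1\}$ to precede the first copy of $1$; combining these, the increasing image of the large letters in $\pi^r$ together with the trailing $a$ yields a $231$ after the first $1$, hence a $1342$. Thus this subrange contributes $0$.

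Summing the five cases gives $\mathrm{r}_n(1342) = \mathrm{r}_{n-1}(1342) + \mathrm{r}_{n-1}(1342) + \mathrm{r}_{n-2}(1342) + 0 + 2 = 2\mathrm{r}_{n-1}(1342)+\mathrm{r}_{n-2}(1342)+2$, with no auxiliary induction required (in contrast to the proof for $1423$). The main obstacle is precisely the middle range: both forcing $\pi_2=n-1$ when $\pi_1=n-2$ and ruling out $2\le\pi_1\le n-3$ amount to exhibiting explicit $1342$ occurrences, and these arguments lean essentially on the identity $\sigma=\sigma^r$ to locate the needed letters in whichever half of $\sigma$ is convenient.
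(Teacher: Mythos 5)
Your proof is correct, and its skeleton coincides with the paper's: the same five-way case split on $\pi_1$ (the values $1$; $2$ through $n-3$; $n-2$; $n-1$; $n$), with the same contributions $2$, $0$, $\mathrm{r}_{n-2}(1342)$, $\mathrm{r}_{n-1}(1342)$, $\mathrm{r}_{n-1}(1342)$, and essentially the same inertness-and-deletion reasoning in the three large cases. The genuine difference is how the two small cases are dispatched. The paper works digit by digit: for $\pi_1=1$ it forces $\pi_2=n$ and then $\pi_i=n-i+2$, producing the two avoiders explicitly, and for $2\le\pi_1\le n-3$ it runs a subcase analysis on $\pi_2$, exhibiting a concrete occurrence of $1342$ in each subcase. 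You instead recycle Proposition \ref{T:132}: for $\pi_1=1$ you show $\sigma$ avoids $1342$ iff the remaining reverse double list avoids $231$, so the count $2$ is immediate; for $\pi_1=a$ in the middle range, the leading $a$ forces the letters exceeding $a$ to form a $231$-avoiding reverse double list, hence (by the proposition's explicit description together with the palindrome identity $\mathcal{R}_m(231)=\mathcal{R}_m(132)$) to begin with $n$ in $\pi$, so that in $\pi^r$ some $b$ with $a<b<n$ precedes $n$, and then $1,b,n,a$ (the first $1$ taken in $\pi$, the rest in $\pi^r$, ending with the trailing $a$) is order-isomorphic to $1342$ --- a contradiction. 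I checked these occurrence claims and the reduction for $\pi_1=n-2$; both routes are sound. Yours buys brevity and reuse of the length-3 result, while the paper's is self-contained and makes the surviving words in the $\pi_1=1$ case completely explicit. One remark: your intermediate assertion that $\{a+1,\dots,n-1\}$ must precede the first copy of $1$ is true but superfluous, since the contradiction already follows from the leading-$a$ forcing combined with the first $1$ and the trailing $a$.
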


\begin{proof}
Suppose $n \geq 4$, and let $\sigma=\pi\pi^r \in \mathcal{R}_n(1342)$.  We consider 5 cases based on the value of $\pi_1$.

Suppose $\pi_1=1$. Then $\pi_2=n$.  Assume for contradiction that $\pi_2\neq n$. This implies that $\pi_2=2$ or $3\leq\pi_2\leq n-1$. If $\pi_2=2$, then the digits 1 and 3 in $\pi$ along with the digits 4 and 2 in $\pi^r$  form a 1342 pattern. If $3\leq\pi_2\leq n-1$, then the digits 1, $\pi_2$, and $n$ in $\pi$ along with the digit 2 in $\pi^r$ form a 1342 pattern. Hence, when $\pi_1=1$, $\pi_2=n$. 
By a similar argument, $\pi_i=n-i+2$ for $2 \leq i \leq n-2$.  Finally, either $\pi_{n-1}=2$ and $\pi_n=3$ or $\pi_{n-1}=3$ and $\pi_n=2$. Thus, there are two ways to avoid $1342$ when $\pi_1=1$.

Suppose $2\leq\pi_1\leq n-3$. Then $\pi_2=1$, $2\leq\pi_2\leq n-2$, or $\pi_2\in\{n-1,n\}$. If $\pi_2=1$, then the digits 1 and $n-1$ in $\pi$ along with the digits $n$ and $\pi_1$ in $\pi^r$ form a 1342 pattern. If $2\leq\pi_2\leq n-2$, then two cases must be considered. If $\pi_1<\pi_2$, then the digits $\pi_1$ and $n-1$ in $\pi$ along with the digits $n$ and $\pi_2$ in $\pi^r$ form a 1342 pattern. If $\pi_1>\pi_2$, then the digits $\pi_2$ and $n-1$ in $\pi$ along with the digits $n$ and $\pi_1$ in $\pi^r$ form a 1342 pattern. If $\pi_2\in\{n-1,n\}$, then the digit 1 in $\pi$ along with the digits $n-2$, $\pi_2$, and $\pi_1$ in $\pi^r$ form a 1342 pattern. Hence, there are no 1342-avoiding reverse double lists where $2\leq\pi_1\leq n-3$.

Suppose $\pi_1=n-2$.  Then $\pi_2=n-1$.  Assume for contradiction that $\pi_2\neq n-1$. This implies that $\pi_2=n$ or $1\leq\pi_2\leq n-3$. If $\pi_2=n$, then the digit 1 in $\pi$ along with the digits $n-1$, $n$, and $n-2$ in $\pi^r$ form a 1342 pattern. If $1\leq\pi_2\leq n-3$, then the digits $\pi_2$ and $n-1$ in $\pi$ along with the digits $n$ and $n-2$ in $\pi^r$ form a 1342 pattern. Thus, if $\pi_1=n-2$, then $\pi_2=n-1$.  Notice that $n-1$ can only play the role of a 3 or 4 in a 1342 pattern. Also, $n-2$ cannot play the role of a $1$. If $n-2$ plays the role of 2, then $n$ in $\pi^r$ must play the role of 4. Now, the only number to play the role of 3 is $n-1$ in $\pi$. However, there is no digit that can play the role of a 1 that precedes $n-1$ in $\pi$. Thus, the remaining positions can be filled in $\mathrm{r}_{n-2}(1342)$ ways.

Suppose $\pi_1=n-1$.  Since $n-1$ can only play the role of a 3 or a 4 in a $1342$ pattern, it cannot play the role of 1 at the beginning or 2 at the end of such a pattern.  There are $\mathrm{r}_{n-1}(1342)$ ways to fill in the remaining digits of $\sigma$, so there are $\mathrm{r}_{n-1}(1342)$ 1342-avoiding reverse double lists on $n$ letters where $\pi_1=n-1$. 

Suppose $\pi_1=n$.  The only role $n$ can play in a $1342$ pattern is a 4, however $n$ only appears as the first and the last member of $\sigma$, so it cannot be involved in a 1342 pattern.  There are $\mathrm{r}_{n-1}(1342)$ ways to fill in the remaining digits of $\sigma$, so there are $\mathrm{r}_{n-1}(1342)$ 1342-avoiding reverse double lists on $n$ letters where $\pi_1=n$.

Combining all 5 cases, for $n \geq 4$, we have shown:

$$\mathrm{r}_n(1342)=	2\mathrm{r}_{n-1}(1342) + \mathrm{r}_{n-2}(1342) + 2.$$

\end{proof}

\subsubsection{The pattern 2413}

\begin{theorem}
$\mathrm{r}_n(2413)= 2\mathrm{r}_{n-1}(2413)+2\mathrm{r}_{n-2}(2413)$ for $n\geq 3$.
\label{T:2413}
\end{theorem}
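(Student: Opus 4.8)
The plan is to follow the template of the preceding theorems: fix $\sigma = \pi\pi^r \in \mathcal{R}_n(2413)$ and split into cases according to the value of $\pi_1$, showing that $\pi_1 \in \{1,n\}$ each contribute $\mathrm{r}_{n-1}(2413)$, that $\pi_1 \in \{2,n-1\}$ each contribute $\mathrm{r}_{n-2}(2413)$, and that $3 \le \pi_1 \le n-2$ contributes nothing. First I would dispose of the extreme cases. If $\pi_1=n$, then $\sigma = n\sigma' n$; since $n$ can only play the role of the $4$ in an occurrence of $2413$, and the $4$ requires one letter before it and two letters after it, neither copy of $n$ (sitting at the two ends of $\sigma$) can take part in a $2413$. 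Hence $\sigma$ avoids $2413$ exactly when $\sigma'$ does, giving $\mathrm{r}_{n-1}(2413)$ such words. Symmetrically, if $\pi_1=1$ then $1$ can only play the role of the $1$, which needs two letters before and one after, so the two end copies of $1$ are again inert; deleting them and relabeling gives a bijection with $\mathcal{R}_{n-1}(2413)$, another $\mathrm{r}_{n-1}(2413)$ words.

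The device I would lean on for the remaining cases is the symmetry $2413^c = 3142 = 2413^r$. Because every reverse double list is a palindrome, $\sigma_{2n+1-i}=\sigma_i$, so $\sigma$ contains $2413$ if and only if it contains $3142$; combined with the complement relation $\sigma \in \mathcal{R}_n(\rho) \iff \sigma^c \in \mathcal{R}_n(\rho^c)$ from Subsection~\ref{S:three}, this shows $\mathcal{R}_n(2413)$ is closed under complement, and complementation sends a first letter of value $a$ to one of value $n+1-a$. I would use this to halve the work: it suffices to treat $\pi_1=a$ and deduce the case $\pi_1=n+1-a$ for free. For the middle range $3 \le \pi_1 = a \le n-2$ I would argue impossibility as follows. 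If the value $n$ occurs before the value $1$ in $\pi$, then reading $\pi_1=a$, then $n$, then $1$, then the right-half copy of $a+1$ displays the values $a,n,1,a+1$ in increasing position, which is order-isomorphic to $2413$; here $3 \le a \le n-2$ guarantees that $1<a<a+1<n$ are four distinct admissible values, and the right-half copy of $a+1$ indeed sits past the first-half copy of $1$. If instead $1$ occurs before $n$ in $\pi$, then $\pi^c$ has $n$ before $1$ with $(\pi^c)_1=n+1-a$ still in the middle range, so $\sigma^c$ contains $2413$ by the previous sentence, whence $\sigma$ contains $3142$ and therefore $2413$. Thus no word with $3 \le \pi_1 \le n-2$ survives.

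It remains to handle $\pi_1=2$ (and then $\pi_1=n-1$ by complementation). Here I would first show $\pi_2=1$ is forced: if $\pi_2\neq 1$, a short split on whether $n$ precedes or follows $1$ in $\pi$ produces an explicit occurrence whose roles $2,4,1,3$ are filled by the value $2$, a suitable large value, the value $1$, and a suitable intermediate value drawn from the right half. Once $\pi_2=1$, the two copies each of $1$ and $2$ occupy the four extreme positions of $\sigma$; a role analysis shows that value $1$ can only be a $1$ and value $2$ only a $2$, neither of which is realizable at an end, so these four letters never lie in a $2413$. Consequently $\sigma$ avoids $2413$ if and only if the central reverse double list on $\{3,\dots,n\}$ does, giving $\mathrm{r}_{n-2}(2413)$ words, and complementation yields the same count for $\pi_1=n-1$. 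Summing the five cases gives $\mathrm{r}_n(2413)=2\mathrm{r}_{n-1}(2413)+2\mathrm{r}_{n-2}(2413)$, after checking the small values of $n$ (and the degenerate empty middle range) directly. I expect the main obstacle to be the two forcing arguments — ruling out the middle range and forcing $\pi_2=1$ when $\pi_1=2$ — since these require producing an explicit $2413$ no matter how the rest of $\pi$ is arranged; the palindrome-plus-complement symmetry is precisely the tool that keeps these from fragmenting into many subcases.
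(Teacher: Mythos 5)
Your proposal is correct and reaches the recurrence by the same global decomposition as the paper --- cases on $\pi_1$, with $\pi_1\in\{1,n\}$ contributing $\mathrm{r}_{n-1}(2413)$ each, $\pi_1\in\{2,n-1\}$ contributing $\mathrm{r}_{n-2}(2413)$ each, and the middle range contributing nothing --- but your route through the cases is genuinely different. The paper proves every case directly by subcases on $\pi_2$ (e.g., for $3\le\pi_1\le n-2$ it splits into $\pi_2=n$, then $2\le\pi_2\le n-1$ with two sub-subcases, then $\pi_2=1$, exhibiting an explicit occurrence each time), and it handles $\pi_1=n-1$ by a separate argument forcing $\pi_2=n$. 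You instead observe that $2413^r=2413^c=3142$, so palindromicity of $\sigma$ together with the complement symmetry makes $\mathcal{R}_n(2413)$ closed under complementation; this legitimately halves the case analysis ($\pi_1=n-1$ comes for free from $\pi_1=2$) and lets you dispatch the middle range with a single clean dichotomy (does $n$ precede $1$ in $\pi$?) rather than subcases on $\pi_2$. That is a real streamlining; the paper exploits complementation this way inside its proof of Theorem \ref{T:2143} but, curiously, not here. One spot in your $\pi_1=2$ case needs repair, though it is fillable: after forcing $\pi_2=1$, your claim that the roles available to the value $2$ are ``not realizable at an end'' is not literally true --- the first copy of $2$ sits at position $1$ of $\sigma$ with plenty of letters after it, so the role of $2$ in $2413$ is positionally available there. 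The correct exclusion (which is the paper's) is value-based: if that copy of $2$ played the role of $2$, the role of $1$ would have to be played by the value $1$, and the only letter following the right-half copy of $1$ is the final $2$, which cannot serve as the role-$3$ letter because no value lies strictly between $1$ and $2$. With that patch, and the small-$n$ checks you already flag, your argument is complete.
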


\begin{proof}
The cases where $n \leq 4$ are easy to check by brute force, so we focus on $n \geq 5$.  Let $\sigma=\pi\pi^r \in \mathcal{R}_n(2413)$, and let $\sigma^{\prime} \in \mathcal{R}_{n-1}(2413)$ be the reverse double list formed by deleting both copies of $n$ in $\sigma$.  We consider 5 cases based on the value of $\pi_1$.

Suppose $\pi_1=1$. Since 1 can only play the role of a 1 in a 2413 pattern, adding $1$ to the beginning and end of $\sigma^{\prime} \in \mathcal{R}_{n-1}(2413)$ will not create a 2413 pattern. Thus, there are $\mathrm{r}_{n-1}(2413)$ ways to create a 2413-avoiding reverse double list on $n$ letters where $\pi_1=1$.

Suppose $\pi_1=2$. We claim that $\pi_2=1$.  Assume for contradiction that $\pi_2\neq 1$. This implies that $\pi_2=n$ or $3\leq\pi_2\leq n-1$. If $\pi_2=n$, then the digits 2, $n$, and 1 in $\pi$ along with the digit 3 in $\pi^r$ form a 2413 pattern. If $3\leq\pi_2\leq n-1$, then the digits 2 and $n$ in $\pi$ along with the digits 1 and $\pi_2$ in $\pi^r$ form a 2413 pattern. Hence, if $\pi_1=2$, then $\pi_2=1$. 
Notice that 2 can only play the role of 1 or 2 in a 2413 pattern, but its location forces 2 to either be the first digit or last digit in a 2413 pattern.  If 2 is involved in a 2413 pattern, it plays the role of 2. If it plays the role of 2 then the 1 in $\pi^r$ must play the role of 1, but there is no digit after 1 that can play the role of 3. Also, 1 can only play the role of 1, but the location of 1 prevents either copy from participating in a 2413 pattern. Now, the remaining positions can be filled in $\mathrm{r}_{n-2}(2413)$ ways to avoid a $2413$ pattern.

Suppose $3 \leq \pi_1 \leq n-2$.  Now, $\pi_2=n$, $2\leq \pi_2 \leq n-1$, or $\pi_2=1$.  If $\pi_2=n$, the digits $\pi_1$, $n$, and 1 in $\pi$ along with the digit $n-1$ in $\pi^r$ form a 2413 pattern.  If $2\leq\pi_2\leq n-1$, two cases need to be considered. If $\pi_1>\pi_2$, then the digits $\pi_2$ and $n$ in $\pi$ along with the digits 1 and $\pi_1$ in $\pi^r$ form a 2413 pattern. If $\pi_1<\pi_2$, then the digits $\pi_1$ and $n$ in $\pi$ along with the digits 1 and $\pi_2$ in $\pi^r$ form a 2413 pattern.  If $\pi_2=1$, the digit 2 in $\pi$ along with the digits $n$, $1$, and $\pi_1$ in $\pi^r$ form a 2413 pattern.  In every case, there are no 2413-avoiding reverse double lists on $n$ letters where $3 \leq \pi_1 \leq n-2$. 

Suppose $\pi_1=n-1$. We claim that $\pi_2=n$. Assume for contradiction that $\pi_2\neq n$. This implies $\pi_2=1$ or $2\leq\pi_2\leq n-2$. If $\pi_2=1$, then the digit 2 in $\pi$ along with the digits $n$, 1, and $n-1$ in $\pi^r$ form a 2413 pattern. If $2\leq\pi_2\leq n-2$, then the digits $\pi_2$ and $n$ in $\pi$ along with the digits $1$ and $n-1$ in $\pi^r$ form a 2413 pattern. Hence, if $\pi_1=n-1$, then $\pi_2=n$. 
Notice that $n-1$ can only play the role of a 3 or 4 in a 2413 pattern, so it cannot play as the 2 at the beginning of a $2413$ pattern. If $n-1$ plays the role of 3, the number $n$ in $\pi$ can play the role of 4. However, there does not exist a number before $n$ in $\pi$ to play the role of 2. Also, $n$ can only play the role of a 4, but the location of $n$ prevents either copy from participating in a 2413 pattern. Now, the remaining positions can be filled in $\mathrm{r}_{n-2}(2413)$ ways to avoid a $2413$ pattern.

Suppose $\pi_1=n$.  The only role $n$ can play in a $2413$ pattern is a 4, however $n$ only appears as the first and the last member of $\sigma$, so it cannot be involved in a 2413 pattern.  There are $\mathrm{r}_{n-1}(2413)$ ways to fill in the remaining digits of $\sigma$, so there are $\mathrm{r}_{n-1}(2413)$ 2413-avoiding reverse double lists on $n$ letters where $\pi_1=n$.

Combining our cases, we have shown that for $n \geq 3$,

$$\mathrm{r}_n(2413)=2\mathrm{r}_{n-1}(2413)+2\mathrm{r}_{n-2}(2413).$$

\end{proof}

\subsubsection{Summary of length 4 patterns}

We have now completely characterized $\mathrm{r}_n(\rho)$ where $\rho$ is a permutation pattern of length at most 4. By exploiting the symmetry inherent in reverse double lists, we found recurrences for $\mathrm{r}_n(\rho)$ for each pattern of length 4.  The corresponding results are given in Table \ref{T:length4formula}.  These results provide an interesting contrast to pattern-avoiding permutations and double lists.  First, there is exactly one non-trivial Wilf equivalence.  Second, the monotone pattern is the hardest pattern to avoid in the context of reverse double lists.  Finally, we obtained a variety of behaviors (constant, cubic, and exponential), as compared to permutation-pattern sequences which only grow exponentially.

\begin{table}[hbt]
\begin{center}
\begin{tabular}{|l|ll|}
\hline
Pattern $\rho$& $\mathrm{r}_n(\rho)$&\\
\hline
$1234 \sim 4321$ & 0 & ($n\geq 7$) \\
\hline
$1243  \sim 2134 \sim 3421  \sim 4312$ & $\frac{n^3}{3}-\frac{7n}{3}+4$ & ($n\geq 2$)\\
\hline
$1324 \sim 4231 \sim 2143 \sim 3412$&$5\cdot 2^{n-2}-4$ & ($n\geq 4$)\\
\hline
$1423 \sim 2314 \sim 3241 \sim 4132$ & $2\mathrm{r}_{n-1}(\rho)+\mathrm{r}_{n-3}(\rho)+2$ & ($n\geq 5$) \\
\hline
$1432  \sim 2341  \sim 3214  \sim 4123 $ &$2\mathrm{r}_{n-1}(\rho)+\mathrm{r}_{n-2}(\rho)$ &($n\geq 5$)\\
\hline
$1342 \sim 2431 \sim 3124 \sim 4213$ & $2\mathrm{r}_{n-1}(\rho)+\mathrm{r}_{n-2}(\rho)+2$ & ($n\geq 4$)\\
\hline
$2413 \sim 3142$&$2\mathrm{r}_{n-1}(\rho)+2\mathrm{r}_{n-2}(\rho)$& ($n\geq 3$)\\
\hline
\end{tabular}
\end{center}
\caption{Enumeration of reverse double lists avoiding a pattern of length 4}
\label{T:length4formula}
\end{table}

Each formula in Table \ref{T:length4formula} is straightforward to convert to a generating function via standard techniques.  The corresponding generating functions $\displaystyle{\sum_{n=0}^\infty \mathrm{r}_n(\rho) x^n}$ are given in Table \ref{T:length4gf}.  Because each generating function is rational, we can find the corresponding linear recurrence satisfied by each sequence $\left\{\mathrm{r}_n(\rho)\right\}_{n \geq 0}$ and determine the largest root of the corresponding characteristic equation to determine the exponential growth rate of the sequence.  The table includes the exact growth rate for every sequence except  $\left\{\mathrm{r}_n(1423)\right\}_{n \geq 0}$.  In that case, the largest root of the characteristic equation is $$\frac{2}{3} + \frac{1}{3}\sqrt[3]{\frac{1}{2}(43-3\sqrt{177})} +\frac{1}{3}\sqrt[3]{\frac{1}{2}(43+3\sqrt{177})} \approx 2.21.$$

\begin{table}[hbt]
\begin{center}
\begin{tabular}{|l|c|l|}
\hline
Pattern $\rho$& ogf for $\mathrm{r}_n(\rho)$&exponential growth rate\\
\hline
\hline
\multirow{2}{*}{1234}&\scalebox{0.75}{$1+x+2x^2+6x^3$}&\multirow{2}{*}{}\\
&\scalebox{0.75}{\phantom{XXXXX}$+16x^4+32x^5+32x^6$}&\\
\hline
1243&$\frac{-x^5+x^4+4x^2-3x+1}{(x-1)^4}$ & \\
\hline
1324&\multirow{2}{*}{$\frac{2x^4+2x^3+x^2-2x+1}{(x-1)(2x-1)}$} & \multirow{2}{*}{$2$}\\
2143&&\\
\hline
1423&$\frac{-x^5+2x^4+x^3+x^2-2x+1}{(x-1)(x^3+2x-1)}$ & $\approx 2.21$\\
\hline
1432&$\frac{-2x^4-x^3+x^2+x-1}{x^2+2x-1}$ & $(1+\sqrt{2}) \approx 2.41$\\
\hline
1342&$\frac{x^4+2x^3-2x+1}{(x-1)(x^2+2x-1)}$ & $(1+\sqrt{2}) \approx 2.41$\\
\hline
2413&$\frac{(x+1)(2x-1)}{2x^2+2x-1}$& $(1+\sqrt{3}) \approx 2.73$\\
\hline
\end{tabular}
\end{center}
\caption{Generating functions and exponential growth rates for avoiding a length 4 pattern}
\label{T:length4gf}
\end{table}

\section{Avoiding a pattern of length 5 or more}\label{S:five}

There are 32 trivial Wilf classes for patterns of length 5.  Table \ref{T:length5data} shows the brute force data for $\left\{\mathrm{r}_n(\rho)\right\}_{n=5}^{7}$ for one pattern $\rho$ from each trivial Wilf class.  From this data it is clear that there are no non-trivial Wilf equivalences for patterns of length 5.  There are some interesting observations that arise from the data.  Notice that $\mathrm{r}_7(15243)=\mathrm{r}_7(15324)$ while $\mathrm{r}_6(15243)\neq \mathrm{r}_6(15324)$. From brute force data, it appears that while $\mathrm{r}_6(15243)<\mathrm{r}_6(15324)$, for $n>7$ $\mathrm{r}_n(15243)>\mathrm{r}_n(15324)$.   Similarly $\mathrm{r}_6(23514)< \mathrm{r}_6(13425)$ and $\mathrm{r}_7(23514)=\mathrm{r}_7(13425)$, while it appears that $\mathrm{r}_n(23514)> \mathrm{r}_n(13425)$  for $n>7$. This behavior is in contrast to enumeration sequences for pattern-avoiding permutations and double lists where there is no known example where $\mathrm{s}_N(\rho)<\mathrm{s}_N(\tau)$ (resp. $\mathrm{d}_N(\rho)<\mathrm{d}_N(\tau)$) for some integer $N$ but $\mathrm{s}_n(\rho)\geq \mathrm{s}_n(\tau)$ (resp. $\mathrm{d}_n(\rho)\geq \mathrm{d}_n(\tau)$) for $n \geq N$.

\begin{table}
\begin{center}
\begin{tabular}{|l|l||l|l|}
\hline
Pattern $\rho$& $\left\{\mathrm{r}_n(\rho)\right\}_{n=5}^{7}$&Pattern $\rho$& $\left\{\mathrm{r}_n(\rho)\right\}_{n=5}^{7}$\\
\hline
\hline
$12345^\dagger$&104, 432, 1584&$15324^\dagger$&104, 442, 1772\\
\hline
$12354^\dagger$&104, 434, 1630&$21543^\dagger$&104, 442, 1800\\
\hline
$15423^\dagger$&104, 434, 1706&$13425^\dagger$&104, 444, 1808\\
\hline
$21354^\dagger$&104, 436, 1676&$14325^\dagger$&104, 444, 1828\\
\hline
21534&104, 436, 1746&$25314^\dagger$&104, 444, 1868\\
\hline
$14523^\dagger$&104, 436, 1748&15342&104, 444, 1880\\
\hline
$12534^\dagger$&104, 438, 1710&$25413^\dagger$&104, 444, 1884\\
\hline
13254&104, 438, 1720&$24513^\dagger$&104, 444, 1888\\
\hline
$12435^\dagger$&104, 438, 1726&$15432^\dagger$&104, 446, 1846\\
\hline
12543&104, 438, 1766&14253&104, 448, 1904\\
\hline
$15234^\dagger$&104, 440, 1704&14532&104, 448, 1914\\
\hline
$12453^\dagger$&104, 440, 1750&14352&104, 448, 1924\\
\hline
$21453^\dagger$&104, 440, 1766&$13524^\dagger$&104, 450, 1926\\
\hline
15243&104, 440, 1772&13542&104, 452, 1958\\
\hline
$13452^\dagger$&104, 440, 1802&25143&104, 454, 1982\\
\hline
$23514^\dagger$&104, 440, 1808&24153&104, 454, 1990\\
\hline
\end{tabular}
\end{center}
\caption{Enumeration of reverse double lists avoiding a pattern of length 5}
\label{T:length5data}
\end{table}

We can say more about the growth rates of these sequences by relating $\mathrm{r}_n(\rho)$ to pattern-avoiding permutations, as described in Theorem \ref{T:rdl2perm}.  First, recall that a shuffle of words $\alpha_1\cdots \alpha_i$ and $\beta_1 \cdots \beta_j$ is a word $w$ of length $i+j$ where there is a subsequence of $w$ equal to $\alpha$, and a disjoint subsequence of $w$ equal to $\beta$.  Now, given a permutation $\rho \in \mathcal{S}_k$ define $\rho^{\leftrightarrow}$ to be the set of permutations that are shuffles of $\rho_1 \cdots \rho_i$ and $\rho_k \cdots \rho_{i+1}$ for any $1 \leq i \leq k$.  For example, $1234^{\leftrightarrow}=\{1234, 1243, 1423, 4123, 1432, 4132, 4312, 4321\}$.  In general, there are $\binom{k-1}{i-1}$ ways to shuffle $\rho_1 \cdots \rho_i$ with $\rho_k \cdots \rho_{i+1}$ where $\rho_1 \cdots \rho_{i+1}$ is not a subsequence of the resulting word.  Summing over all possible values of $i$, if $\rho \in \mathcal{S}_k$, then $\displaystyle{\left|\rho^{\leftrightarrow}\right| = \sum_{i=1}^k\binom{k-1}{i-1} = 2^{k-1}}$.

\begin{theorem}\label{T:rdl2perm}
Given $\rho \in \mathcal{S}_k$ and $n \geq 0$, 

$$\mathrm{r}_n(\rho) = \mathrm{s}_n(\rho^{\leftrightarrow}).$$
\end{theorem}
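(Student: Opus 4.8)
The plan is to prove the sharper statement that, for a \emph{fixed} $\pi \in \mathcal{S}_n$, the reverse double list $\sigma = \pi\pi^r$ avoids $\rho$ if and only if $\pi$ avoids every pattern in the set $\rho^{\leftrightarrow}$. Granting this equivalence, the theorem follows at once: the map $\pi \mapsto \pi\pi^r$ is a bijection from $\mathcal{S}_n$ onto $\mathcal{R}_n$, so $\mathrm{r}_n(\rho)$ counts precisely those $\pi \in \mathcal{S}_n$ for which $\pi\pi^r$ avoids $\rho$, and by the claim this count equals $\left|\mathcal{S}_n(\rho^{\leftrightarrow})\right| = \mathrm{s}_n(\rho^{\leftrightarrow})$.

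For the forward direction I would begin with an occurrence of $\rho$ in $\sigma$, at positions $j_1 < \cdots < j_k$. Since every position $\le n$ precedes every position $> n$, there is a split index $0 \le i \le k$ with $j_1, \dots, j_i \le n$ and $j_{i+1}, \dots, j_k > n$; the first $i$ chosen entries form a subsequence of the copy $\pi$ realizing $\rho_1 \cdots \rho_i$, while the last $k-i$ entries form a subsequence of $\pi^r$ realizing $\rho_{i+1} \cdots \rho_k$. The crucial observation is that because $\rho$ is a permutation, the $k$ selected entries carry $k$ \emph{distinct} values, and since each value of $[n]$ occurs exactly once in each half, pulling every chosen position back to its location in $\pi$ (a position $j > n$ corresponds to position $2n+1-j$ in $\pi$) produces $k$ distinct positions of $\pi$. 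Reading the second group inside $\pi$ reverses its left-to-right order, so there it realizes $\rho_k \rho_{k-1} \cdots \rho_{i+1}$; interleaved with the first group, which still realizes $\rho_1 \cdots \rho_i$, the resulting length-$k$ subsequence of $\pi$ is order-isomorphic to a shuffle of $\rho_1 \cdots \rho_i$ and $\rho_k \cdots \rho_{i+1}$, that is, to a member of $\rho^{\leftrightarrow}$ (the degenerate splits $i=k$ and $i=0$ simply reproduce $\rho$ and $\rho^r$, both of which lie in $\rho^{\leftrightarrow}$). Hence $\pi$ contains a pattern of $\rho^{\leftrightarrow}$.

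The converse runs the same correspondence in reverse. Given some $\tau \in \rho^{\leftrightarrow}$ occurring in $\pi$, I would write $\tau$ as a shuffle of $\rho_1 \cdots \rho_i$ and $\rho_k \cdots \rho_{i+1}$, which splits the occurrence into an $A$-group realizing $\rho_1 \cdots \rho_i$ and a $B$-group realizing $\rho_k \cdots \rho_{i+1}$. Taking the $A$-group from the first copy $\pi$ and the $B$-group from the second copy $\pi^r$, and using that $\pi^r$ reverses the reading order so that the $B$-group reads $\rho_{i+1} \cdots \rho_k$ there, I concatenate the two groups; because all first-copy positions precede all second-copy positions in $\sigma$, this yields a subsequence of $\sigma$ order-isomorphic to $\rho$.

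The genuinely delicate point in both directions is verifying \emph{global} order-isomorphism rather than checking the two halves in isolation: the values of the reassembled occurrence must compare with one another exactly as the entries of $\rho$ do. I would handle this by labeling each position of the reconstructed occurrence with the entry $\rho_m$ of $\rho$ it corresponds to under the shuffle (resp.\ un-shuffle), and then invoking order-isomorphism of the single underlying subsequence of $\pi$ (resp.\ $\sigma$); since all comparisons are inherited from that one subsequence, they automatically agree with those among $\rho_1, \dots, \rho_k$. Keeping the reversal index straight and confirming that distinct values force distinct positions in $\pi$ are the only spots where care is truly required.
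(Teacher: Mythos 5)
Your proposal is correct and takes essentially the same approach as the paper's proof: split an occurrence of $\rho$ in $\sigma=\pi\pi^r$ at the boundary between the two halves, reflect the second part back into $\pi$ (where it reads $\rho_k\cdots\rho_{i+1}$), and use the disjointness of the selected values to recognize a shuffle of $\rho_1\cdots\rho_i$ and $\rho_k\cdots\rho_{i+1}$ inside $\pi$. You merely spell out both directions and the order-isomorphism bookkeeping in more detail than the paper, which compresses the same argument into a few lines.
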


\begin{proof}
Suppose $\sigma =\pi\pi^r \in \mathcal{R}_n$ contains $\rho$.  Then, for some $1 \leq i \leq n$, $\rho_1\cdots \rho_i$ is contained in $\pi$ while $\rho_{i+1} \cdots \rho_k$ is contained in $\pi^r$.  If $\rho_{i+1} \cdots \rho_k$ is contained in $\pi^r$, then $\rho_k \cdots \rho_{i+1}$ is contained in $\pi$.  Further,  $\rho_1\cdots \rho_i$ and $\rho_k \cdots \rho_{i+1}$ use disjoint digits of $\sigma$, so $\pi$ contains a shuffle of $\rho_1\cdots \rho_i$ and $\rho_k \cdots \rho_{i+1}$.  It follows that $\sigma$ avoids $\rho$ if and only if $\pi$ avoids all shuffles of $\rho_1\cdots \rho_i$ and $\rho_k \cdots \rho_{i+1}$ for all $1 \leq i \leq k$.
\end{proof}

As a consequence of Theorem \ref{T:rdl2perm}, when $\rho \in \mathcal{S}_k$, $\mathcal{R}_n(\rho)$ is isomorphic to a classical permutation class $\mathcal{S}_n(B)$, where $B$ is a set of $2^{k-1}$ classical permutations of length $k$.  For example, $\mathrm{r}_n(123) = \mathrm{s}_n(123, 132, 312, 321)$.  Because every set we have enumerated in this paper is a finitely-based classical permutation class, we can use known machinery to determine asymptotic growth of $\mathrm{r}_n(\rho)$ for arbitrary $\rho$.  For example, Vatter showed the following: 

\begin{theorem}[\cite{Vatter06}, Theorem 7]
Let $B$ be a finite set of patterns. The pattern-avoidance tree $T(B)$ is isomorphic
to a finitely labeled generating tree if and only if $B$ contains both a child of an increasing
permutation and a child of a decreasing permutation.
\label{T:finlabel}
\end{theorem}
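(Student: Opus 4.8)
My plan is to read Theorem~\ref{T:finlabel} as a statement about how many distinct rooted-subtree isomorphism types occur in $T(B)$. Recall that the nodes of $T(B)$ at level $\ell$ are the length-$\ell$ permutations avoiding every pattern of $B$, and that the children of a node $\tau$ are exactly the $B$-avoiding permutations obtained by inserting a new largest entry into $\tau$ (one child per \emph{active site}). Labelling each node by the isomorphism class of the subtree hanging below it gives the coarsest labelling making $T(B)$ a generating tree, so $T(B)$ is isomorphic to a finitely labelled generating tree if and only if only finitely many such classes occur. Write $I_m = 12\cdots m$ for the increasing permutation and recall $J_m$ for the decreasing one; ``$B$ contains a child of an increasing (resp.\ decreasing) permutation'' means some $\beta \in B$ becomes $I_{|\beta|-1}$ (resp.\ $J_{|\beta|-1}$) after deletion of its largest entry.

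For necessity I would argue the contrapositive. Suppose $B$ contains no child of an increasing permutation. Then no $\beta\in B$ is itself increasing (an increasing $\beta$ is the child of $I_{|\beta|-1}$ obtained by appending its maximum), so every $I_m$ avoids $B$ and the increasing path $I_0,I_1,I_2,\dots$ lies in $T(B)$. Each site of $I_m$ is active: inserting the new maximum $m+1$ can only create an occurrence of some $\beta\in B$ that uses $m+1$ as its largest entry (any occurrence not using $m+1$ already lives in the $B$-avoider $I_m$), and deleting that entry leaves an occurrence of $\beta$-minus-its-maximum inside $I_m$, which is increasing, forcing $\beta$ to be a child of an increasing permutation---a contradiction. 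Hence $I_m$ has $m+1$ children, so the subtrees rooted at the $I_m$ have roots of unbounded degree and cannot fall into finitely many isomorphism types. The symmetric argument on the decreasing path $J_0,J_1,J_2,\dots$ handles the decreasing condition. Thus finitely many labels requires both children.

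For sufficiency, assume $B$ contains a child $\beta^{+}$ of an increasing permutation and a child $\beta^{-}$ of a decreasing one, of lengths $p$ and $q$. The plan is to produce a constant $N=N(B)$ bounding the number of active sites, and then a bounded-size ``skeleton'' of each node determining its subtree type. Writing $\beta^{+}$ as $I_{p-1}$ with its maximum inserted at position $r$, inserting a new maximum at site $s$ creates $\beta^{+}$ precisely when $\pi$ has an increasing subsequence with at least $r-1$ terms left of $s$ and at least $p-r$ terms right of $s$; the symmetric statement relates $\beta^{-}$ to decreasing subsequences. Every active site must therefore avoid both configurations, and I would show, combining these local Erd\H{o}s--Szekeres ``budgets'' with the global $B$-avoidance of $\pi$, that at most $N$ sites can survive. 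The second step is to argue the succession rule is finite state: since every pattern in $B$ has length at most $K=\max_{\beta\in B}|\beta|$, any $B$-occurrence created anywhere below $\pi$ uses at most $K-1$ of the currently present entries, so the activeness of every site---now and after any sequence of insertions---depends only on the relative order type of a bounded cluster of entries around the $\le N$ active sites. Taking this bounded order type as the label leaves only finitely many possibilities.

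The main obstacle is exactly the active-site bound in the sufficiency direction: one must rule out active sites that stay arbitrarily spread out using only control of \emph{one} child of an increasing and \emph{one} child of a decreasing permutation, rather than the full monotone patterns. I expect the cleanest route applies the increasing/decreasing dichotomy locally at each candidate site---each active site imposes a bounded increasing budget on one side and a bounded decreasing budget on the other---and then invokes global avoidance of $\beta^{+}$ and $\beta^{-}$ to show that two widely separated survivors would combine into a forbidden occurrence, pinning the count to a function of $p$, $q$, and $K$ alone. The delicate closure step is verifying that the windowed skeleton genuinely determines the \emph{entire} subtree (so that equal skeletons give subtree-isomorphic nodes, not merely agreement at the next level), which is what upgrades ``boundedly many children'' to ``boundedly many labels.''
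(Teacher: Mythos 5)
First, a point of comparison: the paper does not prove this statement at all. It is imported verbatim as Theorem~7 of \cite{Vatter06} and used as a black box (to flag which patterns $\rho$ give rational generating functions via $\rho^{\leftrightarrow}$), so your attempt must be judged against Vatter's original argument rather than anything in this paper. Your framing and your necessity direction are sound: the equivalence ``isomorphic to a finitely labeled generating tree iff finitely many rooted-subtree isomorphism types'' is the right reduction, and the contrapositive argument is complete --- if $B$ contains no child of an increasing permutation then no member of $B$ is increasing, every $I_m$ lies in $T(B)$, any occurrence of $\beta \in B$ created by inserting the new maximum must use that maximum as the largest entry of $\beta$ (else $\beta$ already occurred in $I_m$), which would make $\beta$ a child of an increasing permutation; hence $I_m$ has $m+1$ children, degrees are unbounded, and no finite labeling can exist.

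The sufficiency direction, however, is a plan rather than a proof, and the two steps you defer are precisely the hard content of the theorem. (i) You state the correct local criterion for when inserting a new maximum at site $s$ creates $\beta^{+}$ (an increasing subsequence with $r-1$ terms left of $s$ and $p-r$ terms right of $s$, and dually for $\beta^{-}$), but the promised combination of these ``budgets'' with Erd\H{o}s--Szekeres into an actual bound $N(B)$ on active sites is never carried out; ``I would show at most $N$ sites survive'' is a restatement of the goal, and the argument genuinely requires care because the two criteria constrain opposite sides of $s$ in different ways depending on where the maxima sit inside $\beta^{+}$ and $\beta^{-}$. (ii) More seriously, even granting (i), bounded degree is strictly weaker than finitely many labels, and your proposed label --- ``the relative order type of a bounded cluster of entries around the active sites'' --- is not well defined as stated: which entries belong to the cluster? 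An entry positionally far from every active site can still help complete a forbidden pattern together with entries inserted arbitrarily far in the future, so one must prove that all but a bounded set of entries become permanently irrelevant to the subtree below $\pi$, and that the pattern of the relevant entries together with the placement of the active sites among them determines the succession rule and hence the whole subtree. You correctly identify this as ``the delicate closure step,'' but flagging it does not discharge it; without it your argument yields only that $T(B)$ has bounded branching, not that it is finitely labeled. So the proposal has the right architecture (it mirrors the shape of Vatter's proof) but leaves genuine gaps in the only difficult direction.
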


Theorem \ref{T:finlabel} implies that $\mathrm{s}_n(B)$ has a rational generating function if $B$ contains both a permutation that is achieved by inserting one digit into an increasing permutation and another permutation that is achieved by inserting one digit into a decreasing permutation.  The patterns $\rho$ for which $\rho^{\leftrightarrow}$ fits this criteria are marked with $\dagger$ in Table \ref{T:length5data}.

Rational generating functions can indicate either polynomial or exponential growth, however.  In \cite{AAB07}, Albert, Atkinson, and Brignall give necessary and sufficient conditions on when $\mathrm{s}_n(B)$ exhibits polynomial growth.  The direct sum $\alpha \oplus \beta$ of permutations $\alpha$ and $\beta$ is the permutation formed by concatenating $\alpha$ and $\beta$ and incrementing all digits of $\beta$ by $\left|\alpha\right|$.  The skew sum $\alpha \ominus \beta$ of permutations $\alpha$ and $\beta$ is the permutation formed by concatenating $\alpha$ and $\beta$ and incrementing all digits of $\alpha$ by $\left|\beta\right|$.  Further let $\epsilon=(e_1,e_2)$ be an ordered pair where $\{e_1,e_2\} \subseteq \{-1,1\}$.  The pattern class $W(\epsilon)$ is the set of all permutations $\pi=\pi_1\cdots \pi_n$ where there exists a value $1 \leq j \leq n$ such that $\pi^{(i)}$ is increasing if $e_i=1$ and $\pi^{(i)}$ is decreasing if $e_i=-1$ where $\pi^{(1)}:=\pi_1\cdots \pi_j$ and $\pi^{(2)}:=\pi_{j+1}\cdots \pi_n$.  We have the following characterization from Albert, Atkinson, and Brignall:

\begin{theorem}[\cite{AAB07}, Theorem 1]
$\mathrm{s}_n(B)$ has polynomial growth if and only if $B$ contains a member of each of the following 10 sets of permutations: $W(1,1)$, $W(1,-1)$, $W(-1,1)$, $W(-1,-1)$, $W(1,1)^{-1}$, $W(1,-1)^{-1}$, $W(-1,1)^{-1}$, \\$W(-1,-1)^{-1}$, $L_2 = \left\{\bigoplus_{i=1}^j \alpha_i \middle| \alpha_i \in \{1, 21\} \text{ for all }i\right\}$ and $L_2^r$.
\label{T:polychar}
\end{theorem}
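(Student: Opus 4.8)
The plan is to prove the two implications separately, reading the right-hand condition as the statement that the class of $B$-avoiders fails to contain, in full, each of the ten distinguished classes. Throughout, note that each of $W(1,1)$, $W(1,-1)$, $W(-1,1)$, $W(-1,-1)$, their inverses, $L_2$, and $L_2^r$ is closed downward under pattern containment and hence is itself a permutation class; call these ten classes $X_1,\dots,X_{10}$. The elementary observation driving everything is that, because each $X_i$ is downward closed, $B \cap X_i = \emptyset$ holds if and only if every member of $X_i$ avoids all of $B$, i.e. if and only if $(X_i)_n \subseteq \mathcal{S}_n(B)$ for all $n$. Thus ``$B$ contains a member of each $X_i$'' translates exactly into ``$\mathcal{S}_n(B)$ contains none of the $X_i$ as a subclass,'' and the theorem becomes the assertion that the $X_i$ are precisely the minimal obstructions to polynomial growth.

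For the easy direction (necessity, via contrapositive), suppose $B$ contains no member of some $X_i$. Then $(X_i)_n \subseteq \mathcal{S}_n(B)$, so $\mathrm{s}_n(B) \ge |(X_i)_n|$, and it remains only to check that each $X_i$ has super-polynomial growth. The four wedge classes $W(\epsilon)$ count permutations splitting into two monotone runs; a direct count gives $2^n-n$ for $W(1,1)$ and $W(-1,-1)$ and $2^{n-1}$ for the unimodal and ``valley'' classes $W(1,-1)$ and $W(-1,1)$, so each grows like $2^n$. Since taking inverses is a bijection on each $\mathcal{S}_n$, the classes $W(\epsilon)^{-1}$ have identical counts. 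Finally $L_2$ is in bijection with compositions of $n$ into parts $1$ and $2$, giving the Fibonacci number $F_{n+1}\sim \phi^n$, and $L_2^r$ agrees by the reverse symmetry. In every case $\mathrm{s}_n(B)$ is bounded below by an exponentially growing sequence and so is not polynomial.

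The substance is the converse: assuming $\mathcal{S}_n(B)$ contains none of the $X_i$, show $\mathrm{s}_n(B)$ grows polynomially. I would again argue by contrapositive, showing that a $B$-avoidance class of super-polynomial growth must contain one of the ten $X_i$ in full. The engine is the Kaiser--Klazar Fibonacci dichotomy, that any permutation class is either eventually enumerated by a polynomial or satisfies $\mathrm{s}_n\ge F_n$ for all $n$; so it suffices to show that a class with at least Fibonacci growth contains some $X_i$. The guiding picture is that the four sign patterns $\epsilon$ record the four ways an arbitrarily long permutation of the class can split \emph{by position} into two long monotone runs, the inverses $W(\epsilon)^{-1}$ record the analogous splits \emph{by value}, and $L_2,L_2^r$ record the two ways to build arbitrarily long direct or skew sums of trivial blocks. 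Extracting an unbounded instance of one of these substructures from a super-polynomial class, and hence an entire copy of the corresponding $X_i$, would complete the argument; equivalently, the absence of all ten bounds both the number and the size of the monotone ``cells'' needed to describe members of the class, placing the class inside a finite union of acyclically gridded monotone classes whose enumeration is known to be polynomial.

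The main obstacle is precisely this extraction step: proving that the ten families are \emph{jointly exhaustive}, so that no exotic super-polynomial class slips past all of them. This requires a structural, Ramsey-type argument on the grid sketches of large permutations in the class, showing that unbounded size forces either an unbounded monotone run (landing in some $W(\epsilon)$ or its inverse according to the four orientations and the row/column duality) or an unbounded sum-or-skew decomposition into singletons and $21$-blocks (landing in $L_2$ or $L_2^r$), with the Fibonacci threshold ruling out the polynomial regime and the four-plus-four-plus-two casework matching each extracted configuration to exactly one $X_i$. Verifying that these ten cases are complete, with no overlooked minimal super-polynomial class, is the delicate heart of the proof.
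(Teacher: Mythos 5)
You should know at the outset that the paper never proves this statement: it is imported verbatim, with attribution, as Theorem 1 of \cite{AAB07} and then used as a black box in the proof of Theorem \ref{T:poly}. So there is no internal proof to compare against, and your attempt must stand or fall as a proof of the Albert--Atkinson--Brignall theorem itself. The parts of your proposal that are carried out are correct: since each of the ten classes $X_i$ is downward closed under pattern containment, ``$B$ contains a member of each $X_i$'' is indeed equivalent to ``$\mathcal{S}_n(B)$ contains no $X_i$ as a subclass,'' and your necessity direction is complete --- the counts $2^n-n$ for $W(1,1)$ and $W(-1,-1)$, $2^{n-1}$ for the unimodal and valley classes, the invariance of counts under inversion, and the Fibonacci count for $L_2$ and $L_2^r$ are all right, so omitting any $X_i$ from $B$ forces super-polynomial growth.

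The converse, however, is not proved in your proposal; it is only described, and you say so yourself (``the delicate heart of the proof''). That direction is essentially the entire content of the theorem, and two concrete things are missing. First, the Kaiser--Klazar dichotomy is purely enumerative: it tells you a non-polynomial class has at least $F_n$ members of each length, but it produces no structure inside the class, so it cannot by itself ``extract'' anything; the structural step (in \cite{AAB07}, bounding the profile of permutations in a class omitting all ten $X_i$ and showing that bounded profile confines the class to a finite union of polynomially-enumerated pieces) is a separate theorem that you would have to prove. Second, your sketch of the extraction conflates containing long members \emph{of} $X_i$ with containing $X_i$ \emph{in full}: an ``unbounded monotone run'' only yields the increasing class, not a wedge class. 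To conclude $X_i \subseteq \mathcal{S}(B)$ you must find, inside the class, arbitrarily long \emph{universal} elements of $X_i$ --- e.g.\ $21\oplus 21\oplus\cdots\oplus 21$ for $L_2$, or two long monotone runs whose values interleave in every possible way for $W(1,1)$ --- and then invoke downward closure. Neither the exhaustiveness of the ten obstructions nor this universality step appears in the proposal, so what you have is a correct easy direction plus a road map (one that does resemble the actual Albert--Atkinson--Brignall strategy) rather than a proof.
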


Applying Theorem \ref{T:polychar} to the case of reverse double lists yields the following:

\begin{theorem}
$\mathrm{r}_n(\rho)$ has polynomial growth if and only if $\rho$ is trivially Wilf equivalent to $12\cdots k$ or to $1\cdots (k-2)k(k-1)$.
\label{T:poly}
\end{theorem}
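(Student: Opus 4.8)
The plan is to read the statement through Theorem~\ref{T:rdl2perm}, which identifies $\mathrm{r}_n(\rho)$ with $\mathrm{s}_n(\rho^{\leftrightarrow})$ for the finite set $B=\rho^{\leftrightarrow}$ of $2^{k-1}$ length-$k$ patterns, and then to feed this into the Albert--Atkinson--Brignall dichotomy (Theorem~\ref{T:polychar}). Together these say that $\mathrm{r}_n(\rho)$ has polynomial growth if and only if $\rho^{\leftrightarrow}$ contains a member of each of the ten sets $W(1,1)$, $W(1,-1)$, $W(-1,1)$, $W(-1,-1)$, their four inverses, $L_2$, and $L_2^r$. The whole theorem thus reduces to a structural question about $\rho$: for which $\rho$ does $\rho^{\leftrightarrow}$ meet all ten sets? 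I will show the answer is exactly the two stated families. The trivial reverse and complement symmetries permute these ten sets among themselves, so the criterion is invariant under them; this is what lets me phrase the conclusion up to trivial Wilf equivalence. At the outset I record the two universal members $\rho\in\rho^{\leftrightarrow}$ (take $i=k$) and $\rho^r\in\rho^{\leftrightarrow}$ (shuffle the single letter $\rho_1$ to the far right), as these will do most of the work in one direction.

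Sufficiency is concrete. For $\rho=12\cdots k$ there is nothing to prove beyond Theorem~\ref{T:erdos}: $\mathrm{r}_n(12\cdots k)=0$ for $n\ge\binom{k}{2}+1$, which is (eventually-zero, hence) polynomial. For $\rho=1\cdots(k-2)k(k-1)$ I would exhibit two elements of $\rho^{\leftrightarrow}$ that jointly hit all ten sets. First, because $\rho$ is increasing except for its final letter, $12\cdots k\in\rho^{\leftrightarrow}$ (take $i=k-1$ and insert the letter $k-1$ into the single gap of the increasing word $1\cdots(k-2)k$); being increasing, self-inverse, and a direct sum of singletons, $12\cdots k$ lies in $W(1,1)\cap W(1,-1)\cap W(-1,1)\cap L_2$ together with the three corresponding inverse sets, which accounts for seven of the ten. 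Second, $\rho^r=(k-1)\,k\,(k-2)\cdots 21$ lies in $W(-1,-1)$ (split after the first letter) and in $L_2^r$ (its longest increasing subsequence has length $2$, so it avoids $123,213,132$), while its inverse $k\,(k-1)\cdots 3\,1\,2$ again lies in $W(-1,-1)$ (split before the last letter), giving $\rho^r\in W(-1,-1)^{-1}$. These supply the remaining three sets, completing sufficiency.

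The main work, and where I expect the real difficulty, is necessity: that every $\rho$ outside these two trivial classes fails some condition. The plan is to translate each of the ten conditions into an explicit constraint on the one-line form of $\rho$. The eight $W$-type conditions should force the longest increasing and longest decreasing subsequences of $\rho$ to nearly exhaust $\rho$, so that $\rho$ is monotone up to a bounded defect; the two remaining conditions $L_2$ and $L_2^r$ then pin down where that defect may sit, since they require a shuffle avoiding $\{321,231,312\}$ and a shuffle avoiding $\{123,213,132\}$, respectively. After normalizing with the reverse and complement symmetries (for instance so that $\rho_1$ is small), I would argue that any descent of $\rho$ other than a swap of the top two values produces, in \emph{every} shuffle, an unavoidable $321$ or $231$ (killing $L_2$) or an unavoidable $123$ (killing $L_2^r$), which leaves only $12\cdots k$ and $1\cdots(k-2)k(k-1)$.

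The hard part will be controlling the shuffles rather than single patterns: placing a letter of the reversed suffix can destroy one forbidden configuration while silently creating another, so the obstacle is the bookkeeping over all split points $i$ and all interleavings simultaneously against the $L_2$ and $L_2^r$ requirements. I expect to manage this by reducing to the positions of just the values $1,2,k-1,k$ and the monotonicity of the block between them, turning the infinite-looking verification into a bounded case analysis; making that reduction airtight, and confirming it genuinely exhausts the non-monotone possibilities, is the step I would spend the most care on.
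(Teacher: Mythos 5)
Your setup and your sufficiency argument are sound and essentially match the paper's: reduce via Theorem~\ref{T:rdl2perm} to the question of whether $\rho^{\leftrightarrow}$ meets all ten classes of Theorem~\ref{T:polychar}, then for $\rho = 1\cdots(k-2)k(k-1)$ exhibit $12\cdots k \in \rho^{\leftrightarrow}$ (covering seven classes) and $\rho^r \in \rho^{\leftrightarrow}$ (covering the other three); your shortcut of handling $12\cdots k$ directly through Theorem~\ref{T:erdos} is fine. The genuine gap is in necessity, which you leave as a plan, and the plan as written would not go through. Your central claim --- that after normalizing so $\rho_1$ is small, any descent other than a swap of the top two values creates, in every shuffle, a forbidden pattern killing $L_2$ or $L_2^r$ --- is false: $\rho = 2134\cdots k$ has $\rho_1 = 2$ and a descent swapping the \emph{bottom} two values, yet $\rho \in L_2$ itself and the decreasing shuffle $k(k-1)\cdots 321 \in L_2^r$ is obtained by inserting the single letter $\rho_1=2$ into $\rho_k\cdots\rho_2$; indeed $2134\cdots k$ is the reverse complement of $1\cdots(k-2)k(k-1)$, so $\rho^{\leftrightarrow}$ meets all ten classes. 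So your case analysis is not correctly delimited. Moreover, your intended route (use the eight $W$-classes to force near-monotonicity, then $L_2$, $L_2^r$ to locate the defect) is weaker than you expect: each $W$-condition only demands that \emph{some} shuffle lie in the class, and since $\rho^r \in \rho^{\leftrightarrow}$ always, those eight conditions alone constrain $\rho$ very little.

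The paper's necessity argument runs in the opposite order and supplies exactly the combinatorial content your sketch is missing. (i) Every shuffle of $\rho_1\cdots\rho_i$ with $\rho_k\cdots\rho_{i+1}$ begins with $\rho_1$ or $\rho_k$, so an $L_2$-shuffle forces one endpoint value into $\{1,2\}$ and an $L_2^r$-shuffle forces the other into $\{k-1,k\}$; normalize by complement so $\rho_1 \in \{1,2\}$ and $\rho_k \in \{k-1,k\}$. (ii) The key step: in an $L_2$-shuffle the large value $\rho_k$ can occupy only one of the last three positions, which forces the split point to satisfy $i \geq k-3$ and hence $\rho_1\cdots\rho_{k-3} \in L_2$; symmetrically $\rho_4\cdots\rho_k \in L_2$; for $k \geq 10$ these two windows overlap in at least four letters, which forces $\rho \in L_2$ outright, i.e. $\rho$ is layered with all layers of size $1$ or $2$ (patterns with $k \leq 9$ are checked by brute force). (iii) Only then are two of the eight $W$-classes used: an interior $21$-layer kills $W(-1,1)^{-1}$, and $21$-layers at both ends kill $W(1,-1)$, leaving exactly the two stated equivalence classes. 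Without step (ii) --- or some substitute that bounds the split points and propagates the layered structure across all of $\rho$ --- your proposed ``bounded case analysis over the values $1,2,k-1,k$'' has no foundation, so as it stands the necessity half of the theorem is unproven.
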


\begin{proof}

We can check that the theorem holds for $\rho$ of length at most 9 by brute force methods, so without loss of generality, we assume that $k \geq 10$.

First, notice that $\mathrm{s}_n(\rho^{\leftrightarrow})$ meets the criteria in Theorem \ref{T:polychar} when $\rho=12\cdots k$ or $\rho=1\cdots (k-1)k(k-1)$.  In the case of $\rho=1\cdots k$, $\rho$ is a member of classes $W(1,1)$, $W(1,-1)$, $W(-1,1)$, $W(1,1)^{-1}$, $W(1,-1)^{-1}$, $W(-1,1)^{-1}$, and $L_2$, while $\rho^r$ is a member of the other 3 classes, and $\{\rho, \rho^r\} \subset \rho^{\leftrightarrow}$.  In the case of $\rho=1\cdots (k-2)k(k-1)$, $1\cdots k$ is still a member of 7 of the 10 classes, $\rho^r$ is a member of the other 3 classes, and $\{1\cdots k,\rho^r\} \subset \rho^{\leftrightarrow}$.

Now, suppose that $\rho^{\leftrightarrow}$ has a member of each of the 10 necessary permutation classes.  This means there is some integer $i$ such that a shuffle of $\rho_1 \cdots \rho_i$ together with $\rho_n \cdots \rho_{i+1}$ is in $L_2$.  By definition of shuffle, the first digit of the shuffle is either $\rho_1$ or $\rho_n$.  By definition of $L_2$, the first digit of the shuffle is either 1 or 2.

Similarly, there is some integer $j$ such that a shuffle of $\rho_1 \cdots \rho_j$ together with $\rho_n \cdots \rho_{j+1}$ is in $L_2^r$.  By definition of shuffle, the first digit of the shuffle is either $\rho_1$ or $\rho_n$.  By definition of $L_2^r$, the first digit of the shuffle is either $n-1$ or $n$.

Combining these observations, either $\rho_1 \in \{1,2\}$ and $\rho_n \in \{n-1,n\}$ or $\rho_1 \in \{n-1,n\}$ and $\rho_n \in \{1,2\}$.  We will assume that $\rho_1 \in \{1,2\}$ because if $\rho_1 \in \{n-1,n\}$, then $\rho_1^c \in \{1,2\}$ and $\rho^c$ is Wilf-equivalent to $\rho$.

Now, since $\rho_n \in \{n-1,n\}$ and there is a shuffle of $\rho_1 \cdots \rho_i$ together with $\rho_n \cdots \rho_{i+1}$ in $L_2$, it must be the case that $\rho_n$ is among the last 3 digits of the shuffle.  In other words $i \geq n-3$.  This implies that $\rho_1 \cdots \rho_{n-3} \in L_2$.

Similarly, since $\rho_1 \in \{1,2\}$ and there is a shuffle of $\rho_1 \cdots \rho_j$ together with $\rho_n \cdots \rho_{j+1}$ in $L_2^r$, it must be the case that $\rho_1$ is among the last 3 digits of the shuffle.  In other words, $i \leq 3$.  This implies that $\rho_n \cdots \rho_4 \in L_2^r$, and by taking reversal, $\rho_4 \cdots \rho_n \in L_2$.

We assume that $n \geq 10$, so $\rho_1 \cdots \rho_{n-3}$ and $\rho_4 \cdots \rho_n$ overlap by at least 4 digits.  It must then be the case that $\rho \in L_2$ if $\rho^{\leftrightarrow}$ has nontrivial intersection with both $L_2$ and $L_2^r$.

Now, we know that $\rho \in \left\{ \bigoplus_{i=1}^j \alpha_i \middle| \alpha_i \in \{1, 21\}\right\}$.  Assume $\alpha_i=21$ for some $1<i<j$.  In other words $\rho$ has a layer of size 2 that is not at the beginning or the end of $\rho$.  We claim that there is no member of $\rho^{\leftrightarrow}$ in $W(-1,1)^{-1}$.  Suppose to the contrary that there is such a member of $\rho^{\leftrightarrow}$.  Write $\rho=\rho_1\cdots \rho_\ell \alpha_1\alpha_2 \rho_{\ell+3}\cdots \rho_n$. where $\alpha_1>\alpha_2$ are the members of $\alpha_i$. Clearly $\rho \notin W(-1,1)^{-1}$, so there must be an integer $m$ such that a shuffle of $\rho_1 \cdots \rho_m$ and $\rho_{n} \cdots \rho_{m+1}$ is in $W(-1,1)^{-1}$.  If $m\leq\ell$ then in the shuffle $\rho_{\ell+3}$ precedes $\alpha_2$ which precedes $\alpha_1$, and these three digits for a 312 pattern.  But all members of $W(-1,1)^{-1}$ avoid 312.  If $m \geq \ell+2$ then $\rho_\ell$ precedes $\alpha_1$ which precedes $\alpha_2$, and these three digits form a 132 pattern.  But all members of $W(-1,1)^{-1}$ avoid 132.  So it must be the case that $m=\ell+1$.  Thus $\rho_{\ell}$ precedes $\alpha_1$ and $\rho_{\ell+3}$ precedes $\alpha_2$.  Since $\rho_{\ell}<\alpha_1$, it must be the case that all digits larger than $\alpha_1$ appear after $\alpha_1$ in increasing order.  In other words $\rho_{\ell+3}$ must appear after $\alpha_1$.  But then $\rho_{\ell}$, $\alpha_1$, $\rho_{\ell+3}$ and $\alpha_2$ form a 1342 pattern, and all members of $W(-1,1)^{-1}$ avoid 1342.  We have reached a contradiction in every possible scenario, so the $\alpha_i=21$ is only possible if $i=1$ or $i=j$.

Now, suppose that $\rho = 21 \oplus \left(\bigoplus_{i=1}^{n-4} 1 \right)\oplus 21$.  We claim that there is no member of $\rho^{\leftrightarrow}$ in $W(1,-1)$. Suppose there is an integer $j$ such that a shuffle of $\rho_1\cdots \rho_j$ and $\rho_n \cdots \rho_{j+1}$ is in $W(1,-1)$. Since $\rho_1>\rho_2$, it must be the case that $j=1$, and the shuffle in question is $\rho^r = 12 \ominus \left(\bigominus_{i=1}^{n-4} 1 \right)\ominus 12$.  However, the longest increasing sequence at the beginning of $\rho^r$ is $\rho_n\rho_{n-1}$, and then $\rho_{n-2}\cdots \rho_1$ is not in decreasing order because $\rho_2<\rho_1$.  So, there is no member of $\rho^{\leftrightarrow}$ in $W(1,-1)$.

We have now shown that if $\rho^{\leftrightarrow}$ has a nontrivial intersection with $L_2$, $L_2^r$, $W(-1,1)^{-1}$, and $W(1,-1)$, then $\rho \in L_2$, there is at most one layer of size 2 in $\rho$, and that layer must either be the first layer or the last layer.  In other words, $\rho$ is trivially Wilf-equivalent to either $12\cdots k$ or $1\cdots (k-2)k(k-1)$, which is what we wanted to show.

Therefore, there are exactly 2 classes $\mathcal{R}_n(\rho)$ that have polynomial growth for $\rho \in \mathcal{S}_k$.
\end{proof}

\section{Summary}

In this paper, we have completely determined $\mathrm{r}_n(\rho)$ for any permutation pattern $\rho$ of length at most 4.  We have also determined the Wilf classes for patterns of length 5.  By realizing that $\mathrm{r}_n(\rho) = \mathrm{s}_n(\rho^{\leftrightarrow})$, we took advantage of earlier results in the permutation patterns literature to completely characterize when $\mathrm{r}_n(\rho)$ has polynomial growth.  We also modified a classic proof of the Erd\H{o}s--Szekeres Theorem to show that $\mathrm{r}_n(12\cdots k)=0$ for $n \geq \binom{k}{2}+1$, and used the Robinson--Schensted correspondence to determine $\mathrm{r}_{\binom{k}{2}}(12\cdots k)$ for all $k$.  There are still several open questions of interest:

\begin{enumerate}
\item All of the sequences in Table \ref{T:length4formula} have rational generating functions.  Do there exist patterns $\rho$ where the sequence $\{\mathrm{r}_n(\rho)\}$ does not have a rational generating function?
\item We know that the majority of the sequences in Table \ref{T:length5data} have rational generating functions because of Theorem \ref{T:finlabel}; however, actually computing the appropriate finitely-labeled generating trees was prohibitive both in terms of time and computer memory.  What other techniques can be used to enumerate the members of the corresponding permutation classes?
\item We determined $\mathrm{r}_{\binom{k}{2}}(12\cdots k)$ by characterizing the pairs of standard Young tableau that correspond to $\pi$ where $\pi\pi^r \in \mathcal{R}_{\binom{k}{2}}(12\cdots k)$.  While $\mathrm{r}_{\binom{k}{2}-1}(12\cdots k)=\mathrm{r}_{\binom{k}{2}}(12\cdots k)$ and $\mathrm{r}_{\binom{k}{2}-2}(12\cdots k)=\dfrac{\mathrm{r}_{\binom{k}{2}-1}(12\cdots k)}{2}$, for smaller length permutations $\pi$, $\pi\pi^r$ may avoid $12\cdots k$ without $P(\pi)$ having increasing diagonals.  What can be said about $P(\pi)$ where $\pi\pi^r \in \mathcal{R}_n(12\cdots k)$ for $n \leq k-3$?
\end{enumerate}

\acknowledgements
\label{S:ack}

The authors are grateful to two anonymous referees for their feedback, which improved the organization and clarity of this paper.

\nocite{*}
\bibliographystyle{abbrvnat}
\bibliography{rdlbib}

\begin{thebibliography}{18}
\providecommand{\natexlab}[1]{#1}
\providecommand{\url}[1]{\texttt{#1}}
\expandafter\ifx\csname urlstyle\endcsname\relax
  \providecommand{\doi}[1]{doi: #1}\else
  \providecommand{\doi}{doi: \begingroup \urlstyle{rm}\Url}\fi

\bibitem[Albert et~al.(2007)Albert, Atkinson, and Brignall]{AAB07}
M.~H. Albert, M.~D. Atkinson, and R.~Brignall.
\newblock Permutation classes of polynomial growth.
\newblock \emph{Ann. Comb.}, 11\penalty0 (3--4):\penalty0 249--264, 2007.

\bibitem[Barnabei et~al.(2010)Barnabei, Bonetti, and Silimbani]{BBS10}
M.~Barnabei, F.~Bonetti, and M.~Silimbani.
\newblock The {Eulerian} numbers on restricted centrosymmetric permutations.
\newblock \emph{Pure Math. Appl. (PU.M.A.)}, 21\penalty0 (2):\penalty0 99--118,
  2010.

\bibitem[Br{\"a}nd{\"e}n and Mansour(2005)]{BM05}
P.~Br{\"a}nd{\"e}n and T.~Mansour.
\newblock Finite automata and pattern avoidance in words.
\newblock \emph{J. Combin. Theory Ser. A}, 10\penalty0 (1):\penalty0 127--145,
  2005.

\bibitem[Burstein(1998)]{B98}
A.~Burstein.
\newblock \emph{Enumeration of words with forbidden patterns}.
\newblock PhD thesis, University of Pennsylvania, 1998.

\bibitem[Conway and Guttmann(2015)]{CG14}
A.~Conway and A.~Guttmann.
\newblock On the growth rate of 1324-avoiding permutations.
\newblock \emph{Adv. in Appl. Math.}, 64:\penalty0 50--69, 2015.

\bibitem[Cratty et~al.(2017)Cratty, Erickson, Negassi, and Pudwell]{VERUM2014}
C.~Cratty, S.~Erickson, F.~Negassi, and L.~Pudwell.
\newblock Pattern avoidance in double lists.
\newblock \emph{Involve}, 10\penalty0 (3):\penalty0 379--398, 2017.

\bibitem[Egge(2007)]{E07}
E.~Egge.
\newblock Restricted symmetric permutations.
\newblock \emph{Ann. Comb.}, 11:\penalty0 405--434, 2007.

\bibitem[Egge(2010)]{E10}
E.~Egge.
\newblock Enumerating $rc$-invariant permutations with no long decreasing
  subsequences.
\newblock \emph{Ann. Comb.}, 14:\penalty0 85--101, 2010.

\bibitem[Ferrari(2011)]{F11}
L.~Ferrari.
\newblock Centrosymmetric words avoiding 3-letter permutation patterns.
\newblock \emph{Online J. Anal. Comb.}, 6:\penalty0 \#2, 2011.

\bibitem[Greene(1974)]{G74}
C.~Greene.
\newblock An extension of {Schensted's} theorem.
\newblock \emph{Advances in Math.}, 14:\penalty0 254--265, 1974.

\bibitem[Jel{\'i}nek and Mansour(2009)]{JM09}
V.~Jel{\'i}nek and T.~Mansour.
\newblock Wilf-equivalence of $k$-ary words, compositions, and parking
  functions.
\newblock \emph{Electron. J. Combin.}, 16\penalty0 (1):\penalty0 \#R58, 2009.

\bibitem[Knuth(1968)]{K68}
D.~Knuth.
\newblock \emph{The Art of Computer Programming}, volume~1.
\newblock Addison-Wesley, 1968.

\bibitem[Lonoff and Ostroff(2010)]{LO10}
D.~Lonoff and J.~Ostroff.
\newblock Symmetric permutations avoiding two patterns.
\newblock \emph{Ann. Comb.}, 14:\penalty0 143--158, 2010.

\bibitem[Pudwell(2010)]{P10}
L.~Pudwell.
\newblock Enumeration schemes for words avoiding permutations.
\newblock In S.~Linton, N.~Ru{\v s}kuc, and V.~Vatter, editors,
  \emph{Permutation Patterns (2010)}, volume 376 of \emph{London Mathematical
  Society Lecture Note Series}, pages 193--211. Cambridge University Press,
  2010.

\bibitem[Seidenberg(1959)]{S59}
A.~Seidenberg.
\newblock A simple proof of a theorem of {Erd{\H o}s} and {Szekeres}.
\newblock \emph{J. Lond. Math. Soc.}, 34:\penalty0 352, 1959.

\bibitem[Sloane(2018)]{OEIS}
N.~Sloane.
\newblock {The On-Line Encyclopedia of Integer Sequences}.
\newblock \url{http://oeis.org}, 2018.

\bibitem[Vatter(2006)]{Vatter06}
V.~Vatter.
\newblock Finitely labeled generating trees and restricted permutations.
\newblock \emph{J. Symbolic Comput.}, 41:\penalty0 559--572, 2006.

\bibitem[Viennot(1977)]{V77}
G.~Viennot.
\newblock Une forme g\'{e}om\'{e}trique de la correspondance du
  {Robinson--Schensted}.
\newblock In D.~Foata, editor, \emph{Combinatoire et Repr\'{e}sentation du
  Groupe Sym\'{e}trique}, volume 579 of \emph{Lecture Notes in {Math.}}, pages
  29--58. Springer-Verlag, 1977.

\end{thebibliography}
\label{sec:biblio}

\end{document}